\documentclass[journal,twoside,web]{ieeecolor}
\usepackage{generic}
\usepackage{cite}
\usepackage{amsmath,amssymb,amsfonts}
\usepackage{graphicx}
\usepackage{textcomp}

\usepackage[pdftex,dvipsnames]{xcolor}                
\usepackage{tikz}
\usepackage{array}
\newcolumntype{P}[1]{>{\centering\arraybackslash}p{#1}} 
\usepackage{textcomp}
\usepackage{stfloats}
\usepackage{verbatim}
\usepackage{dirtytalk}
\usepackage{multirow}
\usepackage{rotating}
\usepackage{booktabs}
\usepackage{tikz}
\usepackage{threeparttable} 
\usepackage{transparent}

\usepackage{graphicx}
\usepackage{amsmath,amsfonts}
\usepackage{bm}
\makeatletter
\let\NAT@parse\undefined
\makeatother
\usepackage{hyperref}
\usepackage{url}
\usepackage{subfig}
                    
\usepackage{subcaption}
\usepackage{algorithm} 
\usepackage{algpseudocode} 
\usepackage{tikz-network}
\usetikzlibrary{arrows,automata,positioning}

\usepackage{tikz}
\usetikzlibrary{trees}
\usetikzlibrary{chains, fit}
\usepackage{listings}
\usepackage{xcolor,colortbl}
\usetikzlibrary{shapes,arrows}
\usetikzlibrary{shadings}
\usetikzlibrary{automata,topaths}
\usetikzlibrary{shapes}
\usetikzlibrary{shapes.geometric, arrows}
\usepackage{tikz-3dplot}
\usetikzlibrary{matrix}
\usepackage{comment}

\usepackage{pgfplots}
\usetikzlibrary{pgfplots.groupplots}
\usepackage{enumerate}

\newcommand{\R}{{\rm I\!R}}

\usetikzlibrary{quotes}

\def\BibTeX{{\rm B\kern-.05em{\sc i\kern-.025em b}\kern-.08em
    T\kern-.1667em\lower.7ex\hbox{E}\kern-.125emX}}
\usepackage{balance}
  
\newcommand{\boundellipse}[3]
{(#1) ellipse (#2 and #3)
}

\newcommand{\cf}{{\emph{cf.~}}}
\newcommand{\tran}{^{\mbox{\scriptsize T}}}
\usepackage{pifont}
\newtheorem{theorem}{Theorem}
\newtheorem{lemma}[theorem]{Lemma}
\newtheorem{assump}{Assumption}

\numberwithin{assump2}{section} 

\newtheorem{remark}{Remark}

\newtheorem{prop}{Proposition}

\numberwithin{prop2}{section} 

\newtheorem{definition}{Definition}

\newtheorem{corollary}{Corollary}
\newtheorem{example}{Example}

\makeatletter
\newcommand{\nosemic}{\renewcommand{\@endalgocfline}{\relax}}
\newcommand{\dosemic}{\renewcommand{\@endalgocfline}{\algocf@endline}}
\let\oldnl\nl
\newcommand{\nonl}{\renewcommand{\nl}{\let\nl\oldnl}}
\makeatother
\usepackage{amssymb}

\def\BibTeX{{\rm B\kern-.05em{\sc i\kern-.025em b}\kern-.08em
    T\kern-.1667em\lower.7ex\hbox{E}\kern-.125emX}}

\begin{document}
\title{Effective Dual Subgradient Methods Through Sequential Subgradient-Aligned Rays}
\author{Hansi Abeynanda, Chathuranga Weeraddana, and Carlo Fischione, \IEEEmembership{Fellow, IEEE}
\thanks{H. Abeynanda is with the School of Electrical Engineering and Computer Science, KTH Royal Institute of Technology, 11428 Stockholm, Sweden (e-mail: hkab@kth.se).}
\thanks{C. Weeraddana is with the Centre for Wireless Communication,
University of Oulu, Finland (e-mail: chathuranga.weeraddana@oulu.fi).}
\thanks{C. Fischione is with the School of Electrical Engineering and Computer Science and the Digital Futures, KTH Royal Institute of Technology, 11428 Stockholm, Sweden (e-mail: carlofi@kth.se).}}

\maketitle

\begin{abstract}
In this paper, we propose an efficient subgradient-based framework, referred to as the \emph{sequential subgradient-aligned rays} (SSAR) method, for improving the convergence performance of the dual subgradient method. In particular, the proposed method identifies and exploits a finite sequence of rays, when available, as the dual subgradient method evolves, along each of which a subgradient of the negative dual function remains constant and aligns with the ray. While the dual variable iterates remain on each identified ray, a constant stepsize associated with the ray is employed, with the associated constant stepsizes decreasing from one ray to the next. This approach yields constant decrements in the negative dual function values while the dual variable iterates remain on the associated rays, enabling the dual variable iterates to progress more quickly toward a dual optimal solution. We then show that the proposed SSAR method can be leveraged to improve the performance of the dual decomposition method for minimizing the average of local loss functions associated with multiple agents that share a common decision variable, a widely used problem formulation in many large-scale real-world applications. In particular, the SSAR method improves the convergence performance of the corresponding dual decomposition method while also reducing its communication cost. Numerical experiments on a least squares regression problem and a logistic regression problem using real datasets are conducted to evaluate the performance of the SSAR method.

\end{abstract}

\begin{IEEEkeywords}
Distributed optimization, consensus problem, subgradient method, dual decomposition.
\end{IEEEkeywords}

\section{Introduction}\label{sec:Introduction}
Optimization problems arising in networked systems often involve multiple spatially distributed agents that collaboratively perform learning, estimation, resource allocation, or decision-making tasks using locally available information~\cite{Nedic-Distributed-Optimization-for-Control-2018,notarnicola2019constraint}. Such problems arise, for example, in sensor networks, communication networks, and multi-agent control systems, where individual agents may represent sensors, communication nodes, or dynamical subsystems that exchange information over a communication network~\cite{jiang2022distributed}. In many of these applications, the resulting optimization problems are inherently large-scale, involving large datasets and high-dimensional decision variables, while the information required to formulate the overall problem is distributed among the agents. Consequently, there has been significant interest in developing distributed optimization methods that enable agents to collaboratively compute an optimal solution through local computations and limited communication without requiring the direct exchange of their local data.

The effectiveness of distributed optimization methods in networked systems depends critically on their convergence performance and communication efficiency. Motivated by these considerations, we propose an efficient subgradient-based framework to improve both the convergence performance and communication efficiency of the dual decomposition method for solving the following distributed optimization problem, which arises in a broad range of applications, including signal processing, machine learning, wireless communications, and control systems~\cite{Cao-2012-overview,di_2020,Nedic-Distributed-Optimization-for-Control-2018,Yang-survey-of-distributed-optimization-2019,Carlo-Machine-Learning}:
\begin{equation} \label{eq:main-problem}
\begin{array}{ll}
\mbox{minimize} & f_0(\mathbf{z}) = \frac{1}{m}\sum_{i=1}^{m} f_i(\mathbf{z}) \\
\mbox{subject to} & \mathbf{z} \in \mathcal{Y},
\end{array}
\end{equation}
where $\mathbf{z} \in \R^n$ denotes the optimization variable, $\mathcal{S} = \{1,\ldots,m\}$ denotes the set of agents, and $\mathcal{Y} \subseteq \R^n$ is the constraint set. For each $i \in \mathcal{S}$, the function $f_i : \R^n \to \R$ is known only to agent $i$, while the optimization variable $\mathbf{z}$ is shared among all agents. In practical applications, $f_i$ may represent a local loss, cost, or utility function associated with agent $i$, e.g., a loss over its local dataset, an energy consumption cost in a sensor network, or a local control cost associated with a subsystem in a networked control system. 

To compute an optimal solution to problem~\eqref{eq:main-problem}, the agents in the distributed system are required to collaborate through local computations and information exchange. In this regard, numerous primal distributed algorithms~\cite{nedic2009distributed,shi2015extra,qu2017harnessing,pu2021distributed}, dual decomposition methods~\cite{lin2021asynchronous,Sindri_2020,Necoara-inexact-dual-decomposition-2014,Yifan-Asynchrony-and-Inexactness-2023,abeynanda2023primal}, and augmented Lagrangian methods (ALMs)~\cite{condat_2013_splitting_methods,Boyd-Parikh-Chu-Peleato-Eckstein-2010,He_2021,Bai_2024} have been proposed in the literature. Among these approaches, dual decomposition combined with the subgradient method~\cite{lin2021asynchronous,Sindri_2020,Necoara-inexact-dual-decomposition-2014,Yifan-Asynchrony-and-Inexactness-2023,abeynanda2023primal} is particularly attractive due to its conceptual simplicity and ability to exploit problem separability, \cf~\cite{Boyd-EE364b-PrimDualDecomp-07}. Therefore, in this paper, we adopt a dual decomposition framework combined with the subgradient method to solve problem~\eqref{eq:main-problem}. In particular, we consider a coordinator-assisted architecture in which the agents perform local computations and communicate with a central server that coordinates the dual variable updates.

Then, it is important to investigate how the convergence performance and
communication efficiency of dual decomposition methods can be improved
when solving problem~\eqref{eq:main-problem}. One of the key factors
affecting the convergence performance of the corresponding dual
decomposition is the choice of the stepsize in the dual
subgradient method. Thus, various stepsize rules~\cite{Polyak_Intro_Opt,nesterov2018lectures,Thinh_2018,liu2024distributed,Damek_2018,zhu_2019,cutler2023stochastic,ge_2019,wang_2021,zagoruyko2016wide,huang2017densely,Yura_2020,abeynanda2026characteristics} proposed in the literature can be employed.

Traditional stepsize rules in subgradient methods include
constant stepsizes (\cf~\cite[\S~1.4]{Polyak_Intro_Opt},
\cite[\S~2.1.5]{nesterov2018lectures}), polynomially decaying stepsizes
(\cf~\cite{Thinh_2018,liu2024distributed}), and geometrically decaying
stepsizes (\cf~\cite{Damek_2018,zhu_2019}). The step-decay stepsize rule~\cite{cutler2023stochastic,ge_2019,wang_2021,zagoruyko2016wide,huang2017densely} is another commonly used stepsize strategy in the literature. This strategy divides the iterations into stages and uses a constant stepsize within each stage while reducing the stepsize at each subsequent stage. In particular, step-decay stepsizes have been widely used for training deep neural networks~\cite{zagoruyko2016wide,huang2017densely}. The stepsize rule proposed in~\cite{Yura_2020}
is iterate-dependent and adaptive to the local smoothness of the objective
function. 

The stepsize rule proposed in~\cite{abeynanda2026characteristics} is specifically designed for dual subgradient methods by exploiting a characteristic of the dual function referred to as \emph{fixed gradient over rays} (FGOR), or \emph{fixed subgradient over rays} (FSGOR) when the dual function is nondifferentiable, \cf~\S~\ref{subsec:FSGOR-Method}. In particular, the FSGOR method~\cite{abeynanda2026characteristics} initializes the dual subgradient method from a ray along which a subgradient of the negative dual function remains constant and aligns with the ray, and employs a constant stepsize while the dual variable iterates remain on this ray. Once the iterates leave the ray, the method switches to an existing stepsize rule. This approach has been shown to considerably improve the performance of dual decomposition methods applied with existing stepsize rules~\cite{Polyak_Intro_Opt,nesterov2018lectures,Thinh_2018,liu2024distributed,Damek_2018,zhu_2019,cutler2023stochastic,ge_2019,wang_2021,zagoruyko2016wide,huang2017densely,Yura_2020}, while also reducing communication overhead.

However, FSGOR method~\cite{abeynanda2026characteristics} exploits only the initial ray and does not investigate whether subsequent dual variable iterates exhibit the same structural property. This raises the possibility of further exploiting this structure beyond the initial ray. Motivated by this observation (\cf~\S~\ref{subsec:motivation-to-AdFSGOR}), we propose an extension of FSGOR, referred to as the \emph{sequential subgradient-aligned rays} (SSAR) method. The main contributions of this paper are summarized as follows.
\begin{enumerate}
\item \emph{Sequential subgradient-aligned rays (SSAR) method}: We propose SSAR to improve the convergence of the dual subgradient method by exploiting a finite sequence of identifiable rays along which the negative dual function has a constant subgradient aligned with each ray. Using constant stepsizes along individual rays that decrease across successive rays, SSAR achieves constant reductions in the negative dual function while iterates remain on these rays, thereby enabling faster progress toward a dual optimal solution, \cf~\S~\ref{subsec:Development-of-the-AdFSGOR-Method}, Algorithm~\ref{Alg:AdFSGOR-algorithm}.
\item \emph{SSAR-assisted dual decomposition method:} We show how the SSAR method is integrated into the dual decomposition framework for solving~\eqref{eq:main-problem}, \cf~\S~\ref{subsec:AdFSGOR-Assisted-Dual-Decomposition-Method}, Algorithm~\ref{Alg:AdFSGOR-assisted_dual-decomposition-method}.
\item \emph{Computation of restricted-FSGOR rays:} We discuss how to determine the initial restricted-FSGOR rays (\cf~Definition~\ref{def:RFSGOR-ray}) associated with the consensus problem~\eqref{eq:consensus-problem}, which are used to initialize the SSAR-assisted dual decomposition algorithm, \cf~\S~\ref{Sec:computation-of-RFSGOR-rays}.
\item \emph{Communication efficiency of SSAR:} We show that SSAR reduces the communication cost of the
standard dual decomposition method for solving~\eqref{eq:main-problem} and further enhances
the communication efficiency of FSGOR~\cite{abeynanda2026characteristics}, \cf~\S~\ref{Sec:Communication-Efficiency-of-AdFSGOR}.
\item Numerical results demonstrate that SSAR significantly improves the convergence performance and communication efficiency of standard dual decomposition and outperforms FSGOR~\cite{abeynanda2026characteristics} in both aspects, \cf~\S~\ref{sec:numerical-results}.
\end{enumerate}

The rest of this paper is organized as follows. In~\S~\ref{Sec:Preliminaries}, we discuss preliminaries related to our study. The proposed SSAR method and its associated technical details are presented in~\S~\ref{Sec:AdFSGOR-method}. Next, in~\S~\ref{sec:AdFSGOR-Assisted-Dual-Decomposition-Method-and-computation-of-RFSGOR-rays}, we present the SSAR-assisted dual decomposition algorithm and discuss how to compute initial restricted-FSGOR rays associated with the consensus problem~\eqref{eq:consensus-problem}. The communication efficiency of SSAR is discussed in~\S~\ref{Sec:Communication-Efficiency-of-AdFSGOR}. Numerical results are presented in~\S~\ref{sec:numerical-results}. Finally, conclusions are provided, followed by the
appendices.

\emph{Notations:} The set of real numbers, set of positive real numbers,
set of extended real numbers, set of real $n$-vectors, set of real
$m\times n$ matrices, set of positive integers, and set of nonnegative
integers are denoted by $\R$, $\R^+$, $\overline{\R}$, $\R^n$,
$\R^{m\times n}$, $\mathbb{Z}_+$, and $\mathbb{Z}^0_+$, respectively.
The boundary, interior, and convex hull of a set $\mathcal{X}$
are denoted by $\texttt{bnd}~\mathcal{X}$, $\texttt{int}~\mathcal{X}$, and $\texttt{con}~\mathcal{X}$, respectively. The domain of a function $f:\R^n\to\overline{\R}$ is denoted
by $\texttt{dom}~f$. The set of all
subgradients of a function $f$ at a point $\mathbf{x}\in\R^n$ is denoted
by $\partial f(\mathbf{x})$. The range of a matrix $\mathbf{X}$ is denoted
by $R(\mathbf{X})$. The $n\times n$ identity matrix, $n\times n$ zero
matrix, $n$-vector with all entries equal to one, and $n$-vector with all
entries equal to zero are denoted by $\mathbf{I}_n$, $\mathbf{O}_n$,
$\mathbf{1}_n$, and $\mathbf{0}_n$, respectively.

\section{Preliminaries}\label{Sec:Preliminaries}
We first discuss the FSGOR method~\cite{abeynanda2026characteristics}, which provides the basis for motivating the proposed SSAR method. Next, we present the dual decomposition method applied to problem~\eqref{eq:main-problem}.

\subsection{The FSGOR Method~\cite{abeynanda2026characteristics}}\label{subsec:FSGOR-Method}
Consider the following generic optimization problem:
\begin{equation}\label{eq:FSGOR-main-problem} 
\begin{array}{ll}
\mbox{minimize} & h_0(\mathbf{x})\\
\mbox{subject to} & \mathbf{x}\in\mathcal{X}\\
& \mathbf{A}\mathbf{x} = \mathbf{b},
\end{array}
\end{equation}
where $\mathbf{x}\in\R^n$, $h_0:\R^{n}\rightarrow \R$, 
$\mathcal{X}\subseteq\R^n$, $\mathbf{A}\in\R^{p\times n}$ with 
$p<n$ and $\texttt{rank}~\mathbf{A}=p$, and $\mathbf{b}\in\R^{p}$.
Problem~\eqref{eq:FSGOR-main-problem} can be equivalently reformulated as
\begin{equation} \label{eq:FSGOR-main-problem_with-indicator-function}
\begin{array}{ll}
\mbox{minimize} & h(\mathbf{x})=h_0(\mathbf{x}) + \delta_\mathcal{X}(\mathbf{x})\\
\mbox{subject to} & \mathbf{A}\mathbf{x} = \mathbf{b},
\end{array}
\end{equation}
where $\delta_{\mathcal{X}}$ denotes the indicator function of the set
$\mathcal{X}$. Next, we make the following assumptions.
\begin{assump}\label{Assumption:about-h0-and-X}
The function $h_0$ is convex, lower semicontinuous, and proper, and $\mathcal{X}$ is convex and compact.   
\end{assump}
\begin{assump}[cf.~{\cite[Assumption~2]{abeynanda2026characteristics}}]\label{Assumption:about-subdifferential-of-h-at-boundary-ponts} $\forall~\mathbf{\bar x}\in\texttt{bnd}~\mathcal{X}$, there exists a sequence $\{\mathbf{x}_k\}_{k=1}^{\infty}$, $\mathbf{x}_k\in\mathcal{X}$, with $\lim_{k\to\infty}\mathbf{x}_k=\mathbf{\bar x}$ such that $\lim\sup_{k\rightarrow\infty} \ \|\boldsymbol{\nu}_k\|_2 \neq \infty  \ \mbox{with} \ \boldsymbol{\nu}_k\in\partial h(\mathbf{x}_k)$.
\end{assump}

The dual function $g:\R^p\to\overline{\R}$ associated with
problem~\eqref{eq:FSGOR-main-problem_with-indicator-function} is given by $g(\boldsymbol{\lambda})
=
\inf_{\mathbf{x}\in\R^n}
\left(
h(\mathbf{x})
-\boldsymbol{\lambda}\tran(\mathbf{A}\mathbf{x}-\mathbf{b})
\right)$, where $\boldsymbol{\lambda}\in\R^p$ denotes the Lagrange multiplier
associated with the equality constraint in
\eqref{eq:FSGOR-main-problem_with-indicator-function}. Then, the dual problem associated with problem~\eqref{eq:FSGOR-main-problem_with-indicator-function} is given by
\begin{equation}\label{eq:FSGOR-dual-problem}
\underset{\boldsymbol{\lambda}\in\R^p}{\text{maximize}}
\quad
g(\boldsymbol{\lambda}).  
\end{equation}
The subgradient method for solving~\eqref{eq:FSGOR-dual-problem} is given by
\begin{equation}
\label{eq:FSGOR-subgradient-method}
\boldsymbol{\lambda}^{(k+1)}
=
\boldsymbol{\lambda}^{(k)}
-
\gamma_k
\mathbf{d}^{(k)},
\end{equation}
where $\gamma_k$ denotes the stepsize, $\mathbf{d}^{(k)}\in\partial(-g)(\boldsymbol{\lambda}^{(k)})$, and $k$ denotes the iteration index. In particular, we have that $\mathbf{d}^{(k)}=\mathbf{A}\mathbf{x}^{(k)}{-}\mathbf{b}$ is a subgradient of $-g$ at $\boldsymbol{\lambda}^{(k)}$,
where
\begin{equation}
\label{eq:minimizer-of-the-Lagrangian}
\mathbf{x}^{(k)}
\in
\arg\min_{\mathbf{x}\in\mathcal{X}}(h_0(\mathbf{x})-\boldsymbol{\lambda}^{{(k)}\text{T}}\mathbf{A}\mathbf{x}+\boldsymbol{\lambda}^{{(k)}\text{T}}\mathbf{b}).
\end{equation}

The FSGOR method~\cite{abeynanda2026characteristics} initializes
the dual subgradient method~\eqref{eq:FSGOR-subgradient-method} from a ray
in $\texttt{dom}~g$ along which a subgradient of the negative dual function
remains constant and aligns with the ray. Such a ray is referred to as a
restricted-FSGOR (RFSGOR) ray. Then, as long as the
dual variable iterate $\boldsymbol{\lambda}^{(k)}$ lies on the ray, a
constant stepsize is used, and an existing stepsize rule is employed when
$\boldsymbol{\lambda}^{(k)}$ leaves the ray. For clarity, we define an RFSGOR ray below. 
\begin{definition}[RFSGOR ray]\label{def:RFSGOR-ray}
A ray $\{\boldsymbol{\lambda}+\alpha\boldsymbol{\mu}\mid\alpha\geq0\}$ in $\texttt{dom}~g$ along which a subgradient of $-g$ remains constant and aligns with the ray is referred to as an RFSGOR ray.
\end{definition}

To apply the stepsize rule in~\cite{abeynanda2026characteristics}, the computation
of an RFSGOR ray is crucial. In this regard, for a given
$\mathbf{\bar x}\in\texttt{bnd}~\mathcal{X}$ associated with
problem~\eqref{eq:FSGOR-main-problem}, the following feasibility problem is used,
\cf~\cite[eq.~(26)]{abeynanda2026characteristics}:
\begin{equation} \label{eq:RFSGOR-feasibility-problem-original}
\begin{array}{ll}
\underset{(\boldsymbol{\lambda},\,\boldsymbol{\mu},\,\boldsymbol{\nu},\,\boldsymbol{\eta})}{\text{minimize}} & 0\\
\mbox{subject to} & \mathbf{A}\mathbf{\bar x}- \mathbf{b} = \beta(\mathbf{A}\mathbf{A}\tran)^{-1}\mathbf{A}{\boldsymbol\eta}\\
& {\boldsymbol\nu} \in \partial h_0(\mathbf{\bar x}) + \bar{N}_{\mathcal{X}}(\mathbf{\bar x})\\
& {\boldsymbol\eta} \in \bar{N}_{\mathcal{X}}(\mathbf{\bar x})\\
& \mathbf{A}\tran \boldsymbol{\lambda} =  {\boldsymbol\nu} \\
& \mathbf{A}\tran \boldsymbol{\mu} =  {\boldsymbol\eta},
\end{array}
\end{equation}
  where $\bar{N}_{\mathcal{X}}(\mathbf{\bar x})
  =\texttt{int}~N_{\mathcal{X}}(\mathbf{\bar x})$.
  If problem~\eqref{eq:RFSGOR-feasibility-problem-original} is feasible with
  solution $(\boldsymbol{\hat\lambda},\boldsymbol{\hat\mu},
  \boldsymbol{\hat\nu},\boldsymbol{\hat\eta})$, then the RFSGOR ray
  associated with problem~\eqref{eq:FSGOR-main-problem} is given by (\cf~\cite[eq.~(27)]{abeynanda2026characteristics}):
  \begin{equation}\label{eq:RFSGOR-ray-original}
  \hat{\mathcal{R}}(\mathbf{\bar x})
  =
  \{\boldsymbol{\hat\lambda}
  +\alpha\boldsymbol{\hat\mu}\mid\alpha\geq0\}.
  \end{equation}

When the entities in the optimization system do not have access to the
global objective function $h_0$ (e.g., if
problem~\eqref{eq:FSGOR-main-problem} takes the form~\eqref{eq:consensus-problem},
neither the central server nor the agents have access to $f$),
the following relaxed problem is used instead of
\eqref{eq:RFSGOR-feasibility-problem-original} to compute an RFSGOR ray,
\cf~\cite[eq.~(28)]{abeynanda2026characteristics}:
\begin{equation} \label{eq:RFSGOR-feasibility-problem-relaxed}
\begin{array}{ll}
\underset{(\boldsymbol{\mu},\boldsymbol{\eta})}{\text{minimize}}
& 0\\
\mbox{subject to}
& \mathbf{A}\mathbf{\bar x}-\mathbf{b}
= \beta(\mathbf{A}\mathbf{A}\tran)^{-1}\mathbf{A}\boldsymbol{\eta}\\
& \boldsymbol{\eta}\in\bar{N}_{\mathcal{X}}(\mathbf{\bar x})\\
& \mathbf{A}\tran\boldsymbol{\mu}=\boldsymbol{\eta}.
\end{array}
\end{equation}

If problem~\eqref{eq:RFSGOR-feasibility-problem-relaxed} is feasible, then
problem~\eqref{eq:RFSGOR-feasibility-problem-original} is also feasible, as established by the following result. Consequently, an RFSGOR ray
$\hat{\mathcal{R}}(\mathbf{\bar x})$ (\cf~\eqref{eq:RFSGOR-ray-original}) can be computed,
\cf~\cite[eq.~(29)]{abeynanda2026characteristics}.
\begin{lemma}\label{lemma:feasibility-of-relaxed-problem-implies-feasibility-of-original-problem}
Suppose problem~\eqref{eq:RFSGOR-feasibility-problem-relaxed} is
feasible for some $\mathbf{\bar x}\in\texttt{bnd}~\mathcal{X}$, and let
$(\boldsymbol{\tilde\mu},\boldsymbol{\tilde\eta})$ denote a corresponding
solution of~\eqref{eq:RFSGOR-feasibility-problem-relaxed}. Then, for
sufficiently large $c>0$, problem~\eqref{eq:RFSGOR-feasibility-problem-original}
is also feasible with $(\boldsymbol{\hat\lambda},\boldsymbol{\hat\mu},
\boldsymbol{\hat\nu},\boldsymbol{\hat\eta})=(c\boldsymbol{\tilde\mu},
\boldsymbol{\tilde\mu},
c\boldsymbol{\tilde\eta},
\boldsymbol{\tilde\eta})$.
\end{lemma}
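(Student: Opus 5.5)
We verify directly that the proposed tuple $(\boldsymbol{\hat\lambda},\boldsymbol{\hat\mu},\boldsymbol{\hat\nu},\boldsymbol{\hat\eta})=(c\boldsymbol{\tilde\mu},\boldsymbol{\tilde\mu},c\boldsymbol{\tilde\eta},\boldsymbol{\tilde\eta})$ satisfies each constraint of~\eqref{eq:RFSGOR-feasibility-problem-original}. Three constraints are inherited almost verbatim from~\eqref{eq:RFSGOR-feasibility-problem-relaxed}. The first constraint involves only $\boldsymbol{\hat\eta}=\boldsymbol{\tilde\eta}$, so it holds by feasibility of the relaxed problem. The same is true of $\boldsymbol{\hat\eta}\in\bar{N}_{\mathcal{X}}(\mathbf{\bar x})$ and $\mathbf{A}\tran\boldsymbol{\hat\mu}=\boldsymbol{\hat\eta}$. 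The constraint $\mathbf{A}\tran\boldsymbol{\hat\lambda}=\boldsymbol{\hat\nu}$ follows by multiplying $\mathbf{A}\tran\boldsymbol{\tilde\mu}=\boldsymbol{\tilde\eta}$ by $c$.

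The only nontrivial constraint is $c\boldsymbol{\tilde\eta}\in\partial h_0(\mathbf{\bar x})+\bar{N}_{\mathcal{X}}(\mathbf{\bar x})$ for large $c$. The plan is to pick any $\mathbf{s}\in\partial h_0(\mathbf{\bar x})$ and show that $c\boldsymbol{\tilde\eta}-\mathbf{s}\in\texttt{int}\,N_{\mathcal{X}}(\mathbf{\bar x})$ once $c$ is large.

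Nonemptiness of $\partial h_0(\mathbf{\bar x})$ is needed first. Under Assumption~\ref{Assumption:about-h0-and-X}, $h_0$ is finite-valued and convex on $\R^n$, hence continuous. Its subdifferential is therefore nonempty at every point, including $\mathbf{\bar x}$.

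For the interior claim, note that $N_{\mathcal{X}}(\mathbf{\bar x})$ is a convex cone. Because $\boldsymbol{\tilde\eta}$ lies in its interior, there is $r>0$ with $\boldsymbol{\tilde\eta}+\mathbb{B}(0,r)\subseteq N_{\mathcal{X}}(\mathbf{\bar x})$. Scaling by $c>0$ gives $c\boldsymbol{\tilde\eta}+\mathbb{B}(0,cr)\subseteq N_{\mathcal{X}}(\mathbf{\bar x})$. Now choose $c>\|\mathbf{s}\|_2/r$. Then $c\boldsymbol{\tilde\eta}-\mathbf{s}$ lies in the open ball of radius $cr-\|\mathbf{s}\|_2>0$ around $c\boldsymbol{\tilde\eta}$ that is contained in the cone, so it is an interior point. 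Writing $c\boldsymbol{\tilde\eta}=\mathbf{s}+(c\boldsymbol{\tilde\eta}-\mathbf{s})$ then exhibits the required decomposition.

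The main obstacle is justifying that $\partial h_0(\mathbf{\bar x})\neq\emptyset$. If $h_0$ were only proper and lower semicontinuous, with $\mathbf{\bar x}$ on the boundary of its domain, the subdifferential could be empty. Here $h_0:\R^n\to\R$ is real-valued, so this is resolved. If one prefers to avoid that, Assumption~\ref{Assumption:about-subdifferential-of-h-at-boundary-ponts} together with closedness of subdifferential graphs could alternatively be used to extract a bounded limit subgradient.
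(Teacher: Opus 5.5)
Your proof is correct and follows essentially the same route as the appendix proof: fix a subgradient $\mathbf{a}\in\partial h_0(\mathbf{\bar x})$, scale the ball $\mathcal{B}(\boldsymbol{\tilde\eta},\epsilon)\subseteq N_{\mathcal{X}}(\mathbf{\bar x})$ by $c$ using the cone property, and take $c>\|\mathbf{a}\|/\epsilon$ so that $c\boldsymbol{\tilde\eta}-\mathbf{a}$ lies in the cone. You are slightly more careful than the paper in three places. You justify $\partial h_0(\mathbf{\bar x})\neq\emptyset$, you check $\mathbf{A}\tran\boldsymbol{\hat\lambda}=\boldsymbol{\hat\nu}$ directly, and you conclude membership in $\texttt{int}~N_{\mathcal{X}}(\mathbf{\bar x})$, which is what constraint~2 actually requires, rather than only in $N_{\mathcal{X}}(\mathbf{\bar x})$. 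One wording fix: the ball of radius $cr-\|\mathbf{s}\|_2$ that certifies interiority should be centered at $c\boldsymbol{\tilde\eta}-\mathbf{s}$, not at $c\boldsymbol{\tilde\eta}$.
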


\begin{proof}
The proof is provided in Appendix~\ref{Appendix:proof-feasibility-of-relaxed-problem-implies-feasibility-of-original-problem}.
\end{proof}
The feasibility of problem~\eqref{eq:RFSGOR-feasibility-problem-relaxed}
can be checked as follows.
\begin{remark}\label{Remark:feasibility-of-relaxed-problem}
By combining the first and third constraints in
problem~\eqref{eq:RFSGOR-feasibility-problem-relaxed}, we obtain
$\boldsymbol{\mu}=(\mathbf{A}\mathbf{\bar x}-\mathbf{b})/\beta$ and
$\boldsymbol{\eta}=\mathbf{A}\tran(\mathbf{A}\mathbf{\bar x}-\mathbf{b})/\beta$.
Thus, it suffices to check whether the second constraint is satisfied
to determine the feasibility of
problem~\eqref{eq:RFSGOR-feasibility-problem-relaxed}. Thus,
$(\boldsymbol{\mu},\boldsymbol{\eta})$ is feasible for
\eqref{eq:RFSGOR-feasibility-problem-relaxed} if and only if
$\boldsymbol{\eta}\in\texttt{int}~N_{\mathcal{X}}(\mathbf{\bar x})$.
\end{remark}

Note that for any nonempty convex set $\mathcal{C}\subseteq\R^n$, we have
$\texttt{int}~\mathcal{C}\neq\emptyset$ if and only if
$\texttt{aff}(\mathcal{C})=\R^n$. Applying this result to
$N_{\mathcal{X}}(\mathbf{\bar x})$, we obtain
$\texttt{int}~N_{\mathcal{X}}(\mathbf{\bar x})\neq\emptyset$ if and only if
$\texttt{aff}(N_{\mathcal{X}}(\mathbf{\bar x}))=\R^n$. Therefore, when checking the feasibility of problem~\eqref{eq:RFSGOR-feasibility-problem-relaxed} for computing an RFSGOR ray, we choose $\mathbf{\bar x}$ as specified in the following remark.
\begin{remark}\label{remark:condition-for-choosing-x-bar}
When checking the feasibility of problem~\eqref{eq:RFSGOR-feasibility-problem-relaxed} for computing an RFSGOR ray $\hat{\mathcal{R}}(\mathbf{\bar x})$, we choose $\mathbf{\bar x}\in\texttt{bnd}~\mathcal{X}$ such that
$\texttt{aff}(N_{\mathcal{X}}(\mathbf{\bar x}))=\R^n$.
\end{remark}

\subsection{The Dual Decomposition Method for Solving~\eqref{eq:main-problem}}
\label{subsec:The-Standard-Dual-Decomposition-Method}

We first reformulate~\eqref{eq:main-problem} into the equivalent consensus form:
\begin{equation} \label{eq:consensus-problem}
\begin{array}{ll}
\mbox{minimize} & f(\mathbf{y})=\frac{1}{m}\sum_{i=1}^{m}f_i(\mathbf{y}_{i}) \\
\mbox{subject to} & \mathbf{y}_i\in\mathcal{Y},\quad i\in \mathcal{S},\\ 
& \mathbf{y}_i=\mathbf{y}_{i+1},\quad i\in\mathcal{S}\setminus \{m\},
\end{array}
\end{equation}
where $\mathbf{y}_i\in\R^n$, $i\in\mathcal{S}$, denote the local copies
of the shared variable $\mathbf{z}$ in~\eqref{eq:main-problem}, and
$\mathbf{y}
=
[\mathbf{y}_{1}\tran\ \ldots\ \mathbf{y}_{m}\tran]\tran$.
Note that the equality constraints
$\mathbf{y}_i=\mathbf{y}_{i+1}$, $i\in\mathcal{S}\setminus\{m\}$, in
\eqref{eq:consensus-problem} can be written compactly as
$\mathbf{C}\mathbf{y}=\mathbf{0}_{n(m-1)}$, where
\begin{equation}\label{eq:consenses-problem-matrix-C}
\mathbf{C}=\begin{bmatrix}
\mathbf{I}_n & -\mathbf{I}_{n} & \mathbf{O}_n & \cdots & \mathbf{O}_n\\  
\mathbf{O}_n & \mathbf{I}_n & -\mathbf{I}_{n} & \ddots & \vdots \\ %
\vdots & \ddots & \ddots & \ddots & \mathbf{O}_n\\ %
\mathbf{O}_n & \cdots & \mathbf{O}_n & \mathbf{I}_n & -\mathbf{I}_n
\end{bmatrix}\in\R^{n(m-1)\times nm}.
\end{equation}
The dual problem associated with~\eqref{eq:consensus-problem}
is given by
\begin{equation}
\label{eq:dual-problem-of-consensus-problem}
\underset{\boldsymbol{\lambda}\in\R^{n(m-1)}}{\text{maximize}}
\quad
\sum_{i=1}^{m}
\underbrace{
\inf_{\mathbf{y}_{i}\in\mathcal{Y}}
\Big[
\frac{1}{m}f_i(\mathbf{y}_{i})
+
(\boldsymbol{\lambda}_i-\boldsymbol{\lambda}_{i-1})\tran
\mathbf{y}_{i}
\Big]
}_{\text{local subproblem of agent } i},
\end{equation}
where $\boldsymbol{\lambda}
=
[\boldsymbol{\lambda}_1\tran\ \ldots\ \boldsymbol{\lambda}_{m-1}\tran]\tran$
and $\boldsymbol{\lambda}_0=\boldsymbol{\lambda}_m=\mathbf{0}_n$.
Note that the dual function (\cf~\eqref{eq:dual-problem-of-consensus-problem}) decomposes into $m$ local
subproblems. Thus, for a given $\boldsymbol{\lambda}$, each local
subproblem can be solved independently by the corresponding agent $i$. Moreover, the subgradient method~\eqref{eq:FSGOR-subgradient-method} is used to solve~\eqref{eq:dual-problem-of-consensus-problem}. The dual decomposition algorithm is then given in Algorithm~\ref{Alg:Dual-Decomposition-Algorithm}.

\begin{algorithm}[!t]
	\caption{Dual Decomposition for Solving~\eqref{eq:main-problem}} 
	\begin{algorithmic}[1]
	    \Require $\boldsymbol{\lambda}\in\R^{n(m-1)}$.
	    \State $k=0$, $\boldsymbol{\lambda}^{(0)}=\boldsymbol{\lambda}$. $\boldsymbol{\lambda}_0^{(r)}=\boldsymbol{\lambda}_m^{(r)}=\mathbf{0}_n, \ r\in\mathbb{Z}^0_+$.
		\Repeat 
            \State $\forall i$, agent $i$ computes:
            \begin{equation}\label{eq:solution-of-local-subproblem}
\mathbf{y}_i^{(k)}=\underset{\mathbf{y}_i\in\mathcal{Y}}{\arg\min}~\frac{1}{m}f_i(\mathbf{y}_i)+{\big(\boldsymbol{\lambda}^{(k)}_i{-}\boldsymbol{\lambda}^{(k)}_{i-1}\big)}\tran \mathbf{y}_i.
             \end{equation}
 \State $\forall i$, agent $i$ transmits $\mathbf{y}_i^{(k)}$ to CS. %
 
            \State CS computes: ${\mathbf{d}}^{(k)}{=}[(\mathbf{y}^{(k)}_2{-}{\mathbf{y}}^{(k)}_1)\tran{\ldots}(\mathbf{y}^{(k)}_{m}{-}\mathbf{y}^{(k)}_{m{-}1})\tran]\tran$. 
            
            \noindent \ \ \ \  CS computes: $\boldsymbol{\lambda}^{(k+1)} = \boldsymbol{\lambda}^{(k)}- \gamma_k\mathbf{d}^{(k)}$. 
             \State $\forall i$, CS broadcasts $\boldsymbol{\lambda}^{(k+1)}$ to agent $i$. %
            \State $k:=k+1$.
        \Until{a stopping criterion true}
	\end{algorithmic} 
	\label{Alg:Dual-Decomposition-Algorithm}
\end{algorithm}

\section{The SSAR Method}\label{Sec:AdFSGOR-method}
In this section, we first discuss the motivation for introducing the SSAR method. Next, we develop the SSAR algorithm for solving problem~\eqref{eq:FSGOR-main-problem}.

\subsection{Motivation to SSAR}\label{subsec:motivation-to-AdFSGOR}

The FSGOR method~\cite{abeynanda2026characteristics} initializes the dual subgradient method from an RFSGOR ray, say $\mathcal{R}_1$ (\cf~Definition~\ref{def:RFSGOR-ray}), in the domain of the dual function $g$. The FSGOR method then enables the dual subgradient method to evolve along $\mathcal{R}_1$ using a constant stepsize, say $\gamma_1$. Once $\boldsymbol{\lambda}^{(k)}$ leaves the ray, the method switches to an existing stepsize rule~\cite{Polyak_Intro_Opt,nesterov2018lectures,Thinh_2018,liu2024distributed,Damek_2018,zhu_2019,cutler2023stochastic,ge_2019,wang_2021,zagoruyko2016wide,huang2017densely,Yura_2020}. This strategy yields constant decrements in the negative dual function $-g$ as long as $\boldsymbol{\lambda}^{(k)}\in\mathcal{R}_1$, allowing the dual variable iterates to progress more quickly toward a dual optimal solution $\boldsymbol{\lambda}^\star$.

However, the FSGOR method~\cite{abeynanda2026characteristics} identifies and exploits only the initial RFSGOR ray $\mathcal{R}_1$. After $\boldsymbol{\lambda}^{(k)}$ exits $\mathcal{R}_1$, depending on the structure of the underlying optimization problem, it may lie on another RFSGOR ray $\mathcal{R}_2$. If such a ray can be identified, the dual subgradient method can again employ a constant stepsize $\gamma_2$ as long as $\boldsymbol{\lambda}^{(k)}\in\mathcal{R}_2$. This process can be repeated to successively identify and exploit a finite number of subsequent RFSGOR rays, when available, using a constant stepsize along each identified ray. When no subsequent RFSGOR ray is identified or the prescribed maximum number of rays is reached, the method proceeds with an existing stepsize rule. We refer to this approach as the \emph{sequential subgradient-aligned rays} (SSAR) method. By extending the exploitation of the RFSGOR structure beyond the initial ray, SSAR can further improve the convergence performance of the dual subgradient method. In the following, we provide an illustrative example to give insight into the proposed approach.

\begin{figure}[!t]
\centering
\subfloat[]{\includegraphics[width=0.5\linewidth]%
{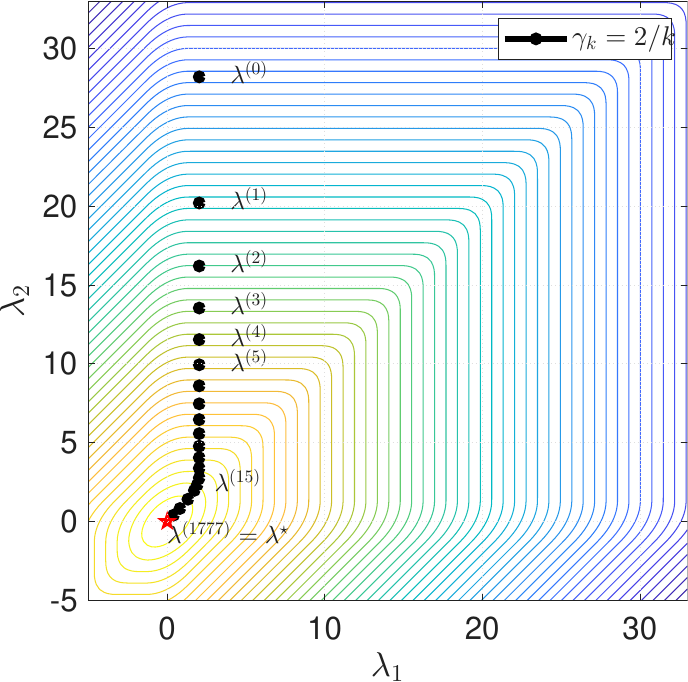}%
\label{subfig:path_of_lambda_diminishing_stepsize}}
\hfil
\subfloat[]{\includegraphics[width=0.5\linewidth]
{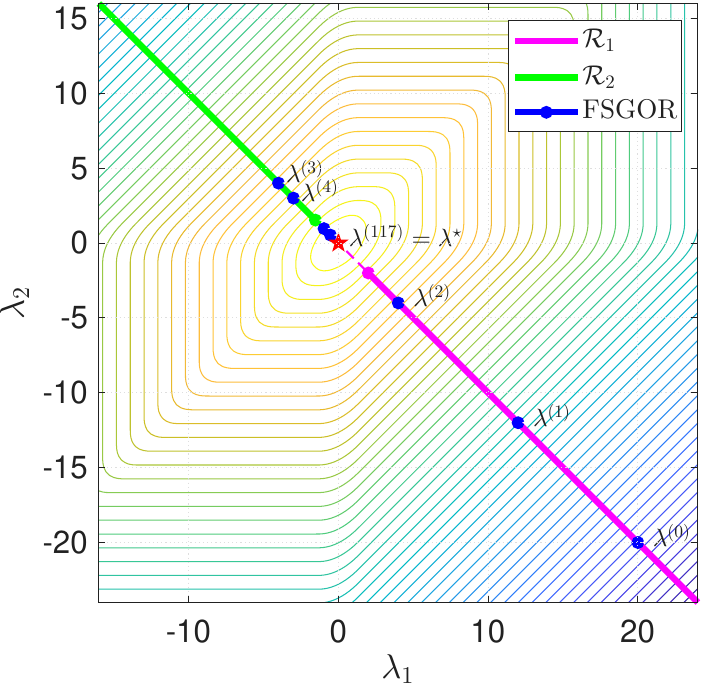}%
\label{subfig:path_of_lambda_FSGOR_initial_stepsize_2.eps}}
\\
\subfloat[]{\includegraphics[width=0.5\linewidth]%
{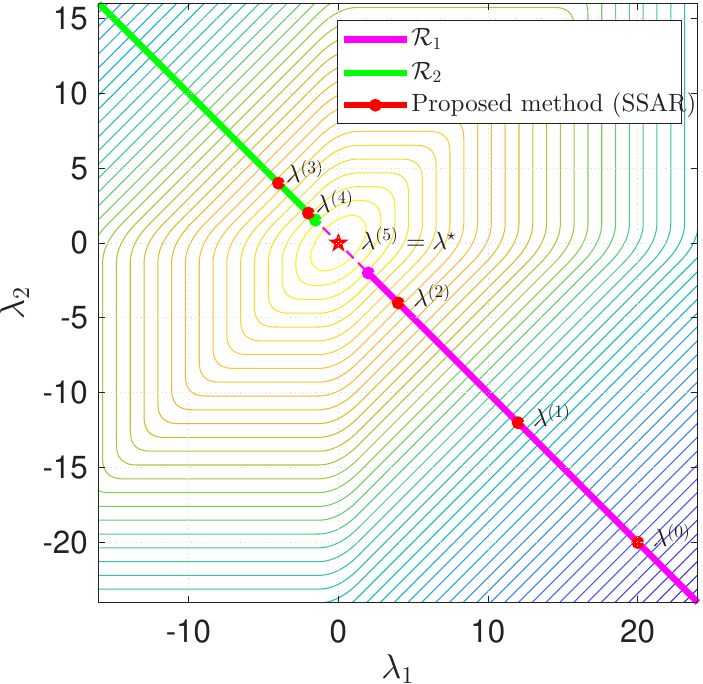}%
\label{subfig:path_of_lambda_AdFSGOR_initial_stepsize_2.eps}}
\hfil
\subfloat[]{\includegraphics[width=0.5\linewidth]%
{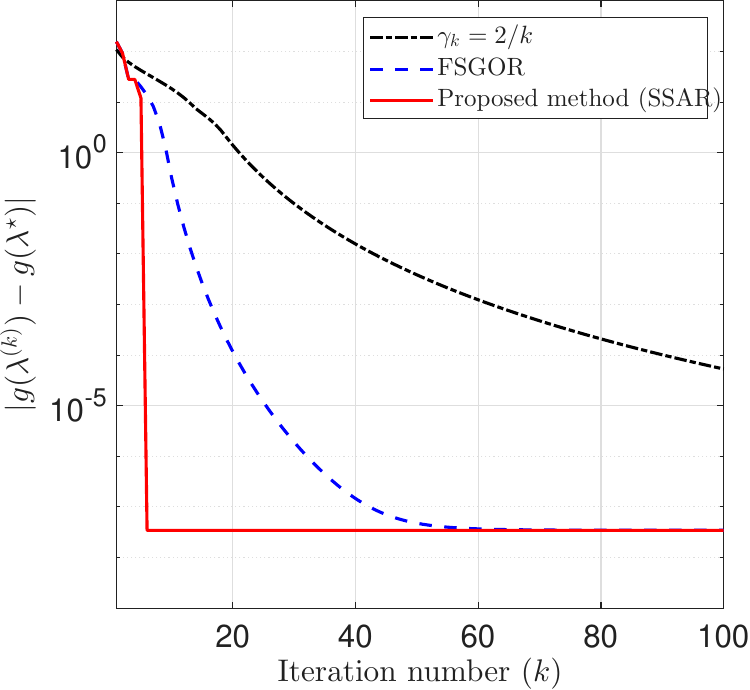}%
\label{subfig:comparison_quadratic_diminishing_FSGOR_AdFSGOR_m_3_n_1.eps}}
\hfil
\caption{Motivation for the SSAR method: path of
$\boldsymbol{\lambda}^{(k)}$ and convergence of the dual function values. \protect\subref{subfig:path_of_lambda_diminishing_stepsize} Dual subgradient method with stepsize $\gamma_k = 2/k$. \protect\subref{subfig:path_of_lambda_FSGOR_initial_stepsize_2.eps}
FSGOR method~\cite{abeynanda2026characteristics}. \protect\subref{subfig:path_of_lambda_AdFSGOR_initial_stepsize_2.eps} SSAR method. \protect\subref{subfig:comparison_quadratic_diminishing_FSGOR_AdFSGOR_m_3_n_1.eps} Convergence of the dual function values.
} 
\label{Fig:motivation-to-AdFSGOR}
\end{figure}

\begin{example}[Motivation to SSAR]\label{ex:motivation-to-AdFSGOR}
Consider the equivalent formulation~\eqref{eq:consensus-problem} of problem~\eqref{eq:main-problem} with $m=3$, $n=1$, $\mathcal{Y}=[-2,2]$, and $f_i(y_i)=y_i^2$ for all $i=1,2,3$. The Fig.~\ref{Fig:motivation-to-AdFSGOR}\subref{subfig:path_of_lambda_diminishing_stepsize}--\subref{subfig:path_of_lambda_AdFSGOR_initial_stepsize_2.eps} depict the level sets of the dual function associated with~\eqref{eq:consensus-problem} (\cf~\eqref{eq:dual-problem-of-consensus-problem}), as well as the dual variable updates $\boldsymbol{\lambda}^{(k)}$ obtained using the dual subgradient method with the stepsize rule $\gamma_k = 2/k$, the FSGOR method~\cite{abeynanda2026characteristics}, and the proposed SSAR method, respectively. The Fig.~\ref{Fig:motivation-to-AdFSGOR}\subref{subfig:path_of_lambda_diminishing_stepsize} shows that the dual subgradient method takes $1777$ steps to converge to the dual optimal solution $\boldsymbol{\lambda}^\star$~\footnote{We consider convergence of $\boldsymbol{\lambda}^{(k)}$ to $\boldsymbol{\lambda}^\star$ with accuracy $10^{-5}$.} using $\gamma_k = 2/k$. Figures~\ref{Fig:motivation-to-AdFSGOR}\subref{subfig:path_of_lambda_FSGOR_initial_stepsize_2.eps} and~\ref{Fig:motivation-to-AdFSGOR}\subref{subfig:path_of_lambda_AdFSGOR_initial_stepsize_2.eps} show that $\mathcal{R}_1$ and $\mathcal{R}_2$ are RFSGOR rays. For both FSGOR and SSAR, the initial constant stepsize is chosen as $\gamma_1 = 2$ as long as $\boldsymbol{\lambda}^{(k)} \in \mathcal{R}_1$, and the stepsize rule $\gamma_k = 1/k$ is used as the switching stepsize rule. From Figure~\ref{Fig:motivation-to-AdFSGOR}\subref{subfig:path_of_lambda_FSGOR_initial_stepsize_2.eps}, we observe that $\boldsymbol{\lambda}^{(3)}$ exits the ray $\mathcal{R}_1$ but lies on $\mathcal{R}_2$. However, the FSGOR method does not detect that $\mathcal{R}_2$ is an RFSGOR ray and therefore switches to the stepsize rule $\gamma_k = 1/k$. It then takes $117$ steps to converge to $\boldsymbol{\lambda}^\star$. In contrast, the proposed approach (\cf~Figure~\ref{Fig:motivation-to-AdFSGOR}\subref{subfig:path_of_lambda_AdFSGOR_initial_stepsize_2.eps}) detects that $\mathcal{R}_2$ is also an RFSGOR ray and again employs a constant stepsize $\gamma_2$ as long as $\boldsymbol{\lambda}^{(k)} \in \mathcal{R}_2$. Here, we choose $\gamma_2 = 0.5$, and Figure~\ref{Fig:motivation-to-AdFSGOR}\subref{subfig:path_of_lambda_AdFSGOR_initial_stepsize_2.eps} shows that the proposed method takes only $5$ steps to converge to $\boldsymbol{\lambda}^\star$. This demonstrates that the proposed method can significantly improve the convergence of the dual subgradient method. Figure~\ref{Fig:motivation-to-AdFSGOR}\subref{subfig:comparison_quadratic_diminishing_FSGOR_AdFSGOR_m_3_n_1.eps} depicts the convergence of the corresponding dual function values for the three methods.
\end{example}

\subsection{The SSAR Algorithm for Solving~\eqref{eq:FSGOR-main-problem}}\label{subsec:Development-of-the-AdFSGOR-Method}
Suppose the SSAR method identifies a finite sequence of RFSGOR rays $\{\mathcal{R}_j\}_{j=1}^N$, where $N\leq N_{\max}$ and $N_{\max}$ denotes the prescribed maximum number of RFSGOR rays to be exploited. The method employs a sequence of decreasing constant stepsizes $\{\gamma_j\}_{j=1}^N$, where $\gamma_j$ is used while $\boldsymbol{\lambda}^{(k)}$ lies on $\mathcal{R}_j$. In particular, we propose the following stepsize rule:
\begin{equation}\label{eq:stepsize-SSAR}
\gamma_k=
\begin{cases}
\displaystyle
\gamma_j=q^{j-1}\gamma
& ; \ \boldsymbol{\lambda}^{(k)}\in\mathcal{R}_j,\
j\in\{1,\ldots,N\}\\
\displaystyle
\hat{\gamma}_k
& ; \ \text{otherwise},
\end{cases}
\end{equation}
where $q\in(0,1)$, $\gamma>0$, and $\hat{\gamma}_k$ denotes an existing stepsize rule that guarantees convergence of the standard dual subgradient method~\eqref{eq:FSGOR-subgradient-method}. The prescribed maximum $N_{\max}$ prevents $\gamma_j$ from becoming excessively small as the number of identified RFSGOR rays increases and limits the SSAR phase to a finite number of rays. While the dual variable iterates remain on each identified RFSGOR ray, the corresponding constant stepsize yields constant decrements in $-g$, enabling the iterates to progress more quickly toward a dual optimal solution $\boldsymbol{\lambda}^\star$.

To employ the stepsize rule~\eqref{eq:stepsize-SSAR}, a mechanism for identifying the RFSGOR rays $\mathcal{R}_j$ is required. The initial ray $\mathcal{R}_1$ is determined as in the FSGOR method~\cite{abeynanda2026characteristics} by solving~\eqref{eq:RFSGOR-feasibility-problem-relaxed}, \cf~Lemma~\ref{lemma:feasibility-of-relaxed-problem-implies-feasibility-of-original-problem}. After $\boldsymbol{\lambda}^{(k)}$ exits an RFSGOR ray $\mathcal{R}_j$, we next seek to determine whether a subsequent RFSGOR ray $\mathcal{R}_{j+1}$ can be identified, where $j\in\{1,\ldots,N-1\}$. To establish a condition for identifying such a ray, we first provide the following result.
\begin{lemma}\label{lemma:proof-affine-over-the-line-segment}
    Let $f:\R^n\to\overline{\R}$ be a convex function. Let $\mathbf{x}_1,\mathbf{x}_2\in\texttt{dom}~f$ with $\mathbf{x}_1\neq\mathbf{x}_2$, and suppose that $\exists~\mathbf{s}\in\R^n$ s.t., $\mathbf{s}\in\partial f(\mathbf{x}_1)$ and $\mathbf{s}\in\partial f(\mathbf{x}_2)$. Then, $f$ is affine over the line segment joining $\mathbf{x}_1$ and $\mathbf{x}_2$, i.e., over the region $\mathcal{C}=\texttt{con}~\{\mathbf{x}_1,\mathbf{x}_2\}$.
\end{lemma}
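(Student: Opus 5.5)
The plan is to show that $f$ agrees on $\mathcal{C}$ with the affine function $\ell(\mathbf{x}) = f(\mathbf{x}_1) + \mathbf{s}\tran(\mathbf{x}-\mathbf{x}_1)$. I will do this by sandwiching $f$ between $\ell$ from below and from above.

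\emph{Step 1: the subgradient inequalities at the endpoints.} Since $\mathbf{s}\in\partial f(\mathbf{x}_1)$ and $\mathbf{s}\in\partial f(\mathbf{x}_2)$, we have, for all $\mathbf{x}\in\R^n$,
\begin{equation*}
f(\mathbf{x}) \geq f(\mathbf{x}_1) + \mathbf{s}\tran(\mathbf{x}-\mathbf{x}_1), \qquad f(\mathbf{x}) \geq f(\mathbf{x}_2) + \mathbf{s}\tran(\mathbf{x}-\mathbf{x}_2).
\end{equation*}
Evaluating the first inequality at $\mathbf{x}_2$ and the second at $\mathbf{x}_1$ gives $f(\mathbf{x}_2)-f(\mathbf{x}_1)\geq \mathbf{s}\tran(\mathbf{x}_2-\mathbf{x}_1)$ and the reverse inequality. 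Hence $f(\mathbf{x}_2) = f(\mathbf{x}_1) + \mathbf{s}\tran(\mathbf{x}_2-\mathbf{x}_1)$. In words, $\ell$ matches $f$ at both endpoints. Finiteness of $f(\mathbf{x}_1)$ and $f(\mathbf{x}_2)$ holds because both points lie in $\texttt{dom}~f$, so these manipulations are legitimate in $\overline{\R}$.

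\emph{Step 2: the sandwich.} Take $\mathbf{x}_\theta = \theta\mathbf{x}_1 + (1-\theta)\mathbf{x}_2$ with $\theta\in[0,1]$. The lower bound $f(\mathbf{x}_\theta)\geq \ell(\mathbf{x}_\theta)$ is the first subgradient inequality. For the upper bound, convexity gives $f(\mathbf{x}_\theta)\leq \theta f(\mathbf{x}_1)+(1-\theta)f(\mathbf{x}_2)$. By Step 1 this equals $\theta\ell(\mathbf{x}_1)+(1-\theta)\ell(\mathbf{x}_2)$, which is $\ell(\mathbf{x}_\theta)$ because $\ell$ is affine. Therefore $f(\mathbf{x}_\theta)=\ell(\mathbf{x}_\theta)$ for every $\theta\in[0,1]$. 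This shows that $f$ is affine on $\mathcal{C}=\texttt{con}~\{\mathbf{x}_1,\mathbf{x}_2\}$, and in particular that $\mathcal{C}\subseteq\texttt{dom}~f$.

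\emph{Main obstacle.} The argument is elementary; the only real point is Step 1, namely seeing that a \emph{common} subgradient forces the two one-sided inequalities to collapse to an equality. I would also remark, for use in the sequel, that the same sandwich shows $\mathbf{s}\in\partial f(\mathbf{x}_\theta)$ for every $\theta\in[0,1]$. Indeed, for any $\mathbf{x}$ we have $f(\mathbf{x})\geq \ell(\mathbf{x}) = \ell(\mathbf{x}_\theta)+\mathbf{s}\tran(\mathbf{x}-\mathbf{x}_\theta) = f(\mathbf{x}_\theta)+\mathbf{s}\tran(\mathbf{x}-\mathbf{x}_\theta)$. This is presumably the property used to certify subsequent RFSGOR rays.
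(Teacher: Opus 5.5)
Your proof is correct and matches the paper's argument: both combine the two subgradient inequalities to get $f(\mathbf{x}_2)=f(\mathbf{x}_1)+\mathbf{s}\tran(\mathbf{x}_2-\mathbf{x}_1)$, then sandwich $f$ on $\mathcal{C}$ between the subgradient lower bound at $\mathbf{x}_1$ and the convexity upper bound (you only reverse the parametrization in $\theta$). Your closing remark that $\mathbf{s}\in\partial f(\mathbf{x}_\theta)$ for every $\theta\in[0,1]$ is also correct; the lemma does not state it, but it is the subgradient persistence the paper relies on when it applies the lemma in the proof of Proposition~\ref{prop:existance-of-RFSGOR-rays-during-iterations}.
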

\begin{proof}
    The proof is given in Appendix~\ref{Appendix:proof-affine-over-the-line-segment}.
\end{proof}
Then, the following result provides a condition for identifying a subsequent RFSGOR ray containing $\boldsymbol{\lambda}^{(k)}$.
\begin{prop}\label{prop:existance-of-RFSGOR-rays-during-iterations}
Suppose Assumption~\ref{Assumption:about-h0-and-X} and
Assumption~\ref{Assumption:about-subdifferential-of-h-at-boundary-ponts}
hold. Let $\{\boldsymbol{\lambda}^{(k)}\}$ be the sequence of dual variable iterates generated by the dual subgradient method~\eqref{eq:FSGOR-subgradient-method},
and let $\mathbf{x}^{(k)}$ be a minimizer of the Lagrangian associated with
problem~\eqref{eq:FSGOR-main-problem_with-indicator-function} corresponding to
the dual variable iterate $\boldsymbol{\lambda}^{(k)}$, \cf~\eqref{eq:minimizer-of-the-Lagrangian}. Suppose that $\mathbf{x}^{(k)}\in\texttt{bnd}~\mathcal{X}$ and problem~\eqref{eq:RFSGOR-feasibility-problem-relaxed} is feasible
with $\mathbf{\bar x}=\mathbf{x}^{(k)}$. Then, $\boldsymbol{\lambda}^{(k)}$ lies on an RFSGOR ray.
\end{prop}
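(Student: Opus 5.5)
We construct an explicit ray through $\boldsymbol{\lambda}^{(k)}$ and then verify that a single subgradient of $-g$ is valid all along it and points along it. Set $\mathbf{\bar x}=\mathbf{x}^{(k)}$. By Remark~\ref{Remark:feasibility-of-relaxed-problem}, feasibility of~\eqref{eq:RFSGOR-feasibility-problem-relaxed} means
\[
\boldsymbol{\tilde\mu}=(\mathbf{A}\mathbf{\bar x}-\mathbf{b})/\beta, \qquad \boldsymbol{\tilde\eta}=\mathbf{A}\tran\boldsymbol{\tilde\mu}\in\texttt{int}~N_{\mathcal{X}}(\mathbf{\bar x}).
\]
Since $\mathbf{\bar x}$ minimizes the Lagrangian at $\boldsymbol{\lambda}^{(k)}$, optimality gives
\[
\mathbf{A}\tran\boldsymbol{\lambda}^{(k)}\in\partial h_0(\mathbf{\bar x})+N_{\mathcal{X}}(\mathbf{\bar x})=\partial h(\mathbf{\bar x}),
\]
using that $h_0$ is finite so the sum rule holds. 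The candidate ray is
\[
\mathcal{R}=\{\boldsymbol{\lambda}^{(k)}+\alpha\boldsymbol{\tilde\mu}\mid \alpha\ge 0\}.
\]

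The first key step is to show that $\mathbf{\bar x}$ remains a Lagrangian minimizer at every point of $\mathcal{R}$. For $\alpha\ge0$ we have
\[
\mathbf{A}\tran(\boldsymbol{\lambda}^{(k)}+\alpha\boldsymbol{\tilde\mu})=\mathbf{A}\tran\boldsymbol{\lambda}^{(k)}+\alpha\boldsymbol{\tilde\eta}.
\]
Because $N_{\mathcal{X}}(\mathbf{\bar x})$ is a convex cone, $\partial h_0(\mathbf{\bar x})+N_{\mathcal{X}}(\mathbf{\bar x})+\alpha\boldsymbol{\tilde\eta}\subseteq \partial h_0(\mathbf{\bar x})+N_{\mathcal{X}}(\mathbf{\bar x})$. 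Hence $\mathbf{0}\in\partial h(\mathbf{\bar x})-\mathbf{A}\tran(\boldsymbol{\lambda}^{(k)}+\alpha\boldsymbol{\tilde\mu})$, so $\mathbf{\bar x}$ minimizes the Lagrangian at each point of $\mathcal{R}$. Two consequences follow: $g$ is finite along $\mathcal{R}$, since $\mathcal{X}$ is compact and $h_0$ is proper and lsc (Assumption~\ref{Assumption:about-h0-and-X}), so $\mathcal{R}\subseteq\texttt{dom}~g$; and by the standard Danskin-type argument recalled after~\eqref{eq:minimizer-of-the-Lagrangian}, $\mathbf{d}=\mathbf{A}\mathbf{\bar x}-\mathbf{b}\in\partial(-g)$ at every point of $\mathcal{R}$. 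This $\mathbf{d}$ is constant along the ray, and $\mathbf{d}=\beta\boldsymbol{\tilde\mu}$, so it is aligned with the ray direction. By Lemma~\ref{lemma:proof-affine-over-the-line-segment}, $-g$ is moreover affine along $\mathcal{R}$, which matches the intended behaviour. Under Definition~\ref{def:RFSGOR-ray}, this shows that $\boldsymbol{\lambda}^{(k)}$ lies on an RFSGOR ray.

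The second step is to connect the construction to the format~\eqref{eq:RFSGOR-ray-original}, which asks for $\boldsymbol{\hat\nu}=\mathbf{A}\tran\boldsymbol{\hat\lambda}$ with the interior set $\bar N_{\mathcal{X}}$. Here Lemma~\ref{lemma:feasibility-of-relaxed-problem-implies-feasibility-of-original-problem} enters: for large $c$, the point $c\boldsymbol{\tilde\mu}$ gives a feasible solution of~\eqref{eq:RFSGOR-feasibility-problem-original}. I would either invoke it directly, or replicate its mechanism at $\boldsymbol{\lambda}^{(k)}$ by starting the ray at $\boldsymbol{\lambda}^{(k)}+\alpha_0\boldsymbol{\tilde\mu}$. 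Adding $\alpha_0\boldsymbol{\tilde\eta}$, which lies in the interior of the cone, pushes $\mathbf{A}\tran\boldsymbol{\lambda}$ into $\partial h_0(\mathbf{\bar x})+\texttt{int}~N_{\mathcal{X}}(\mathbf{\bar x})$. The ray from $\boldsymbol{\lambda}^{(k)}$ contains this sub-ray and carries the same constant subgradient, so $\boldsymbol{\lambda}^{(k)}$ itself lies on it.

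The main obstacle is the subgradient claim at points other than $\boldsymbol{\lambda}^{(k)}$. It needs $\mathbf{\bar x}$ to be an actual minimizer there, and not merely to satisfy a relaxed inclusion. The cone-invariance argument above handles this cleanly. Assumption~\ref{Assumption:about-subdifferential-of-h-at-boundary-ponts} is needed only to guarantee $\partial h_0(\mathbf{\bar x})\neq\emptyset$ at the boundary point and that the sum rule is valid there. I would check that point carefully, citing the corresponding argument of~\cite{abeynanda2026characteristics}.
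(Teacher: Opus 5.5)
Your proof is correct, and it reaches the same ray $\{\boldsymbol{\lambda}^{(k)}+t\boldsymbol{\tilde\mu}\mid t\geq0\}$ as the paper, but by a more direct route.

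The paper does not start from the ray through $\boldsymbol{\lambda}^{(k)}$. It proceeds in four steps:
\begin{itemize}
\item It uses Lemma~\ref{lemma:feasibility-of-relaxed-problem-implies-feasibility-of-original-problem} and the result of~\cite{abeynanda2026characteristics} to obtain that $\hat{\mathcal{R}}(\mathbf{x}^{(k)})=\{((c+\alpha)/\beta)(\mathbf{A}\mathbf{x}^{(k)}-\mathbf{b})\mid\alpha\geq0\}$ is an RFSGOR ray with subgradient $\mathbf{A}\mathbf{x}^{(k)}-\mathbf{b}$.
\item It observes that the same vector is a subgradient of $-g$ at $\boldsymbol{\lambda}^{(k)}$.
\item It applies Lemma~\ref{lemma:proof-affine-over-the-line-segment} to make $-g$ affine on $\mathcal{C}=\texttt{con}(\{\boldsymbol{\lambda}^{(k)}\}\cup\hat{\mathcal{R}}(\mathbf{x}^{(k)}))$.
\item It recovers the ray through $\boldsymbol{\lambda}^{(k)}$ as limits of points $\mathbf{p}(\alpha,t/\alpha)\in\mathcal{C}$.
\end{itemize}

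Your key observation replaces all of this. Since $\alpha\boldsymbol{\tilde\eta}\in N_{\mathcal{X}}(\mathbf{x}^{(k)})$, the point $\mathbf{x}^{(k)}$ remains a Lagrangian minimizer along the whole ray. The Danskin inequality then gives the constant aligned subgradient $\beta\boldsymbol{\tilde\mu}$ at once. So you need neither the external result, nor Lemma~\ref{lemma:proof-affine-over-the-line-segment}, nor the limiting step.

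That limiting step is in fact delicate in the paper. When $\boldsymbol{\lambda}^{(k)}$ lies off the line carrying $\hat{\mathcal{R}}(\mathbf{x}^{(k)})$, the hull $\mathcal{C}$ is not closed: it contains no point $\boldsymbol{\lambda}^{(k)}+t\boldsymbol{\tilde\mu}$ with $t>0$. The passage to the limit therefore really rests on lower semicontinuity of $-g$, not on closedness of $\mathcal{C}$.

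Your argument also shows what is actually used. You need only $\boldsymbol{\tilde\eta}\in N_{\mathcal{X}}(\mathbf{x}^{(k)})$, not its interior, and Assumption~\ref{Assumption:about-subdifferential-of-h-at-boundary-ponts} plays no role. What the paper's route buys in exchange is an explicit link between the new ray and the ray $\hat{\mathcal{R}}(\mathbf{x}^{(k)})$ produced by the feasibility-problem machinery. The new ray is a translate of it carrying the same subgradient.

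Two minor points:
\begin{itemize}
\item Your second step, matching the format~\eqref{eq:RFSGOR-ray-original}, is unnecessary. Definition~\ref{def:RFSGOR-ray} only asks for a ray in $\texttt{dom}~g$ along which a subgradient of $-g$ is constant and aligned, and your first step already delivers that.
\item Your closing worry is moot. Since $h_0$ is real-valued on $\R^n$, $\partial h_0(\mathbf{\bar x})\neq\emptyset$ and the sum rule hold automatically. You can even bypass subdifferential calculus. With $L(\mathbf{x},\boldsymbol{\lambda})=h_0(\mathbf{x})-\boldsymbol{\lambda}\tran(\mathbf{A}\mathbf{x}-\mathbf{b})$, for every $\mathbf{x}\in\mathcal{X}$ one has $L(\mathbf{x},\boldsymbol{\lambda}^{(k)}+\alpha\boldsymbol{\tilde\mu})-L(\mathbf{\bar x},\boldsymbol{\lambda}^{(k)}+\alpha\boldsymbol{\tilde\mu})=[L(\mathbf{x},\boldsymbol{\lambda}^{(k)})-L(\mathbf{\bar x},\boldsymbol{\lambda}^{(k)})]-\alpha\boldsymbol{\tilde\eta}\tran(\mathbf{x}-\mathbf{\bar x})\geq0$.
\end{itemize}
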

\begin{proof}
Since problem~\eqref{eq:RFSGOR-feasibility-problem-relaxed} is feasible
with $\mathbf{\bar x}=\mathbf{x}^{(k)}$, it follows from
Lemma~\ref{lemma:feasibility-of-relaxed-problem-implies-feasibility-of-original-problem}
and Remark~\ref{Remark:feasibility-of-relaxed-problem} that, for sufficiently
large $c>0$, $\boldsymbol{\hat\mu}=\boldsymbol{\tilde\mu}=(1/\beta)
(\mathbf{A}\mathbf{x}^{(k)}-\mathbf{b})$ and $\boldsymbol{\hat\lambda}=c\boldsymbol{\tilde\mu}=(c/\beta)
(\mathbf{A}\mathbf{x}^{(k)}-\mathbf{b})$ are feasible for problem~\eqref{eq:RFSGOR-feasibility-problem-original}. Then,
\begin{equation}
\label{eq:prop:existance-of-RFSGOR-rays-during-iterations-1}
\hat{\mathcal{R}}(\mathbf{x}^{(k)})
=
\Big\{
\boldsymbol{\hat\lambda}
+
\alpha\boldsymbol{\hat\mu}
\mid
\alpha\geq0
\Big\}
=
\left\{
\frac{c+\alpha}{\beta}
(\mathbf{A}\mathbf{x}^{(k)}{-}\mathbf{b})
\mid
\alpha\geq0
\right\}
\end{equation}
is an RFSGOR ray associated with problem~\eqref{eq:FSGOR-main-problem},
\cf~\eqref{eq:RFSGOR-ray-original}. This means that there exists
$\boldsymbol{a}\in\R^p$ such that $\boldsymbol{a}$ is a subgradient of $-g$ along
the ray $\hat{\mathcal{R}}(\mathbf{x}^{(k)})$ and aligns with the ray.
In particular, we have $\boldsymbol{a}
=\beta\boldsymbol{\hat\mu}=
\mathbf{A}\mathbf{x}^{(k)}-\mathbf{b}$. Moreover, from \eqref{eq:minimizer-of-the-Lagrangian} and the dual function $g$ associated with~\eqref{eq:FSGOR-main-problem_with-indicator-function}, we have $(\mathbf{A}\mathbf{x}^{(k)}-\mathbf{b})
\in
\partial l(\boldsymbol{\lambda}^{(k)})$, where $l=-g$. Thus,
\begin{equation}
\label{eq:prop:existance-of-RFSGOR-rays-during-iterations-2}
\mathbf{A}\mathbf{x}^{(k)}{-}\mathbf{b}
{\in}
\partial l(\boldsymbol{\lambda}^{(k)}) \ \text{and} \ 
\forall~\boldsymbol{\lambda}\in
\hat{\mathcal{R}}(\mathbf{x}^{(k)}),
\mathbf{A}\mathbf{x}^{(k)}{-}\mathbf{b}
{\in}
\partial l(\boldsymbol{\lambda}).
\end{equation}
Equation~\eqref{eq:prop:existance-of-RFSGOR-rays-during-iterations-2}
means that the negative dual function $l$ has the same subgradient at
$\boldsymbol{\lambda}^{(k)}$ and at every point along the ray
$\hat{\mathcal{R}}(\mathbf{x}^{(k)})$. Consider the following two cases: 1) $\boldsymbol{\lambda}^{(k)}
\in\hat{\mathcal{R}}(\mathbf{x}^{(k)})$ and 2) $\boldsymbol{\lambda}^{(k)}
\not\in\hat{\mathcal{R}}(\mathbf{x}^{(k)})$. If
$\boldsymbol{\lambda}^{(k)}\in\hat{\mathcal{R}}(\mathbf{x}^{(k)})$, then it is
straightforward to see that $\boldsymbol{\lambda}^{(k)}$ lies on an
RFSGOR ray, \cf~\eqref{eq:prop:existance-of-RFSGOR-rays-during-iterations-1}.
In the latter case, for every
$\boldsymbol{\lambda}\in\hat{\mathcal{R}}(\mathbf{x}^{(k)})$, the function
$l$ is affine over
$\texttt{con}~\{\boldsymbol{\lambda}^{(k)},\boldsymbol{\lambda}\}$,
\cf~\eqref{eq:prop:existance-of-RFSGOR-rays-during-iterations-2} and
Lemma~\ref{lemma:proof-affine-over-the-line-segment}. Thus, $l$ is affine
over the region
\begin{equation}
\label{eq:prop:existance-of-RFSGOR-rays-during-iterations-3}
\mathcal{C}
=
\texttt{con}
\left(
\{\boldsymbol{\lambda}^{(k)}\}
\cup
\hat{\mathcal{R}}(\mathbf{x}^{(k)})
\right).
\end{equation}
Now consider any point
$\boldsymbol{\lambda}\in\hat{\mathcal{R}}(\mathbf{x}^{(k)})$. Then $\boldsymbol{\lambda}=\boldsymbol{\hat\lambda}+\alpha\boldsymbol{\hat\mu},\alpha\geq0$, \cf~\eqref{eq:prop:existance-of-RFSGOR-rays-during-iterations-1}.
Any point on the line segment joining $\boldsymbol{\lambda}^{(k)}$ and
$\boldsymbol{\lambda}$ is given by
\begin{equation}\label{eq:point-on-the-line-segment}
\mathbf{p}(\alpha,r)
=
(1-r)\boldsymbol{\lambda}^{(k)}
+
r\boldsymbol{\lambda}
=
\boldsymbol{\lambda}^{(k)}+r(\boldsymbol{\hat\lambda}+\alpha\boldsymbol{\hat\mu}-\boldsymbol{\lambda}^{(k)}),
\end{equation}
where $r\in[0,1]$. Clearly,
\begin{equation}
\label{eq:point-between-a-line-segment-3}
\forall~\alpha\geq0,\ \forall~r\in[0,1],\quad
\mathbf{p}(\alpha,r)\in\mathcal{C},
\end{equation}
where $\mathcal{C}$ is given in
\eqref{eq:prop:existance-of-RFSGOR-rays-during-iterations-3}. Now, for any fixed $t\geq0$ and any $\alpha>t$, choose
$r=t/\alpha$. Then, from~\eqref{eq:point-on-the-line-segment},
\begin{equation}
\label{eq:point-between-a-line-segment-4}
\mathbf{p}(\alpha,r)=\mathbf{p}(\alpha,t/\alpha)=\boldsymbol{\lambda}^{(k)}+
\frac{t}{\alpha}(
\boldsymbol{\hat\lambda}-\boldsymbol{\lambda}^{(k)})+t\boldsymbol{\hat\mu}.
\end{equation}
Therefore,
\begin{equation}
\label{eq:point-between-a-line-segment-5}
\lim_{\alpha\to\infty}
\mathbf{p}(\alpha,t/\alpha)
=
\boldsymbol{\lambda}^{(k)}
+
t\boldsymbol{\hat\mu}.
\end{equation}
Since $\mathcal{C}$ is closed and
$\mathbf{p}(\alpha,t/\alpha)\in\mathcal{C}$ for all $\alpha>t$, it follows that
$\boldsymbol{\lambda}^{(k)}+t\boldsymbol{\hat\mu}\in\mathcal{C}$ for all $t\geq0$.
Thus, $\mathcal{R}=\left\{\boldsymbol{\lambda}^{(k)}+t\boldsymbol{\hat\mu}\mid t\geq 0\right\}\subseteq\mathcal{C}$. Hence, $l$ is affine over $\mathcal{R}$, with $\mathbf{A}\mathbf{x}^{(k)}-\mathbf{b}
\in\partial l(\boldsymbol{\lambda}),\,\forall~\boldsymbol{\lambda}\in\mathcal{R}$,
\cf~\eqref{eq:prop:existance-of-RFSGOR-rays-during-iterations-2} and
\eqref{eq:prop:existance-of-RFSGOR-rays-during-iterations-3}.
Moreover, we have $\mathbf{A}\mathbf{x}^{(k)}-\mathbf{b}=\beta\boldsymbol{\hat\mu}$, \cf~\eqref{eq:prop:existance-of-RFSGOR-rays-during-iterations-1}. Therefore, $\mathcal{R}$ is an RFSGOR ray. Hence, the proof follows because $\boldsymbol{\lambda}^{(k)}\in\mathcal{R}$. 
\end{proof}

The corresponding SSAR algorithm is provided in Algorithm~\ref{Alg:AdFSGOR-algorithm}, and its steps are described below.
\subsubsection{Description of Algorithm~\ref{Alg:AdFSGOR-algorithm}}
First, problem~\eqref{eq:RFSGOR-feasibility-problem-relaxed} is required to be feasible for some $\mathbf{\bar x}\in\texttt{bnd}~\mathcal{X}$, and the corresponding solution $\boldsymbol{\tilde\mu}$ is obtained. The initial RFSGOR ray $\mathcal{R}_1$ is then obtained according to Lemma~\ref{lemma:feasibility-of-relaxed-problem-implies-feasibility-of-original-problem}, and the dual subgradient method~\eqref{eq:FSGOR-subgradient-method} is initialized from $\mathcal{R}_1$, \cf~step~$1$. The primal variable iterate $\mathbf{x}^{(k)}$ is computed in step~$2$, \cf~\eqref{eq:minimizer-of-the-Lagrangian}. As long as $\boldsymbol{\lambda}^{(k)}\in\mathcal{R}_j$ (\cf~\cite[Remark~4]{abeynanda2026characteristics}), the dual subgradient method is performed with $\gamma_k=q^{j-1}\gamma$ and the corresponding subgradient $\mathbf{d}^{(k)}=\beta\boldsymbol{\tilde\mu}$ of $-g$, \cf~steps~$3$--$5$. Once $\boldsymbol{\lambda}^{(k)}$ exits $\mathcal{R}_j$, step~$7$ checks the sufficient conditions in Proposition~\ref{prop:existance-of-RFSGOR-rays-during-iterations} for identifying a subsequent RFSGOR ray $\mathcal{R}_{j+1}$ containing $\boldsymbol{\lambda}^{(k)}$. If these conditions are satisfied, the direction $\boldsymbol{\tilde\mu}$ of $\mathcal{R}_{j+1}$ is computed, and the dual subgradient method is performed with $\gamma_k=q^j\gamma$, \cf~steps~$8$--$10$. This process is repeated until no subsequent RFSGOR ray is identified or $N_{\max}$ RFSGOR rays have been identified. The SSAR exploitation phase then terminates, and the dual subgradient method proceeds with the existing stepsize rule $\hat{\gamma}_k$ until a stopping criterion is satisfied, \cf~steps~$15$--$18$.

\begin{algorithm}[t]
	\caption{SSAR Algorithm for Solving~\eqref{eq:FSGOR-main-problem}} 
	\begin{algorithmic}[1]
            \Require A solution $\tilde{\boldsymbol{\mu}}$ obtained from problem~\eqref{eq:RFSGOR-feasibility-problem-relaxed} corresponding to some $\mathbf{\bar x}\in\texttt{bnd}~\mathcal{X}$ (\cf~Remark~\ref{remark:condition-for-choosing-x-bar}), $c >0$, $\beta>0$, $q\in(0,1)$, $\gamma>0$, and $N_{\max}\in\mathbb{Z}_+$.  
	    \State $\boldsymbol{\lambda}^{(0)}=c\tilde{\boldsymbol{\mu}}$, $k=0$, $j=1$.
        \State Compute: ${\mathbf{x}^{(k)}{\in}
\arg\min_{\mathbf{x}\in\mathcal{X}}(h_0(\mathbf{x}){-}\boldsymbol{\lambda}^{{(k)}\text{T}}\mathbf{A}\mathbf{x}{+}\boldsymbol{\lambda}^{{(k)}\text{T}}\mathbf{b})}$.
        \If{$\mathbf{x}^{(k)}=\mathbf{\bar x}$} \Comment{$\boldsymbol{\lambda}^{(k)}$ lies on RFSGOR ray $\mathcal{R}_j$}

                \State Perform \eqref{eq:FSGOR-subgradient-method} with $\gamma_k=q^{j-1}\gamma$ and $\mathbf{d}^{(k)}=\beta\boldsymbol{\tilde\mu}$.
                \State Set $k:=k+1$ and go to step~2.
            \Else \Comment{$\boldsymbol{\lambda}^{(k)}$ exits RFSGOR ray $\mathcal{R}_j$}
            \If{$j<N_{\max}$, $\mathbf{x}^{(k)}\in\texttt{bnd}~\mathcal{X}$, and problem~\eqref{eq:RFSGOR-feasibility-problem-relaxed} is feasible with $\mathbf{\bar x}=\mathbf{x}^{(k)}$} \Comment{$\boldsymbol{\lambda}^{(k)}$ lies on $\mathcal{R}_{j+1}$}
            \State Set $\boldsymbol{\tilde\mu}:=(\mathbf{A}\mathbf{x}^{(k)}-\mathbf{b})/\beta$ and $\mathbf{\bar x}:=\mathbf{x}^{(k)}$. 
            \State Perform \eqref{eq:FSGOR-subgradient-method} with $\gamma_k=q^j\gamma$ and $\mathbf{d}^{(k)}=\beta\boldsymbol{\tilde\mu}$.
            \State Set $k:=k+1$, $j:=j+1$, and go to step~$2$. 
            \Else \Comment{The SSAR exploitation phase terminates}
            \State Go to step~$15$.
            \EndIf 
            \EndIf
            \Repeat 
            \State Perform \eqref{eq:FSGOR-subgradient-method} with an existing stepsize rule $\hat{\gamma}_k$, \cf~\eqref{eq:stepsize-SSAR}.
            \State $k:=k+1$.
        \Until{a stopping criterion is satisfied}
	\end{algorithmic} 
	\label{Alg:AdFSGOR-algorithm}
\end{algorithm} 

The convergence of Algorithm~\ref{Alg:AdFSGOR-algorithm} is established below.
\begin{prop}
Suppose Assumption~\ref{Assumption:about-h0-and-X} and
Assumption~\ref{Assumption:about-subdifferential-of-h-at-boundary-ponts}
hold. Then, Algorithm~\ref{Alg:AdFSGOR-algorithm} converges to an optimal
solution $\boldsymbol{\lambda}^\star$ of the dual
problem~\eqref{eq:FSGOR-dual-problem}.
\end{prop}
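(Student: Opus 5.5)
The argument reduces the convergence of Algorithm~\ref{Alg:AdFSGOR-algorithm} to that of the standard dual subgradient method~\eqref{eq:FSGOR-subgradient-method} under the switching rule $\hat\gamma_k$. The key observation is that the SSAR exploitation phase lasts only finitely many iterations. After that, the algorithm is exactly the standard subgradient method started from some finite point $\boldsymbol{\lambda}^{(K)}$, and by the choice of $\hat\gamma_k$ in~\eqref{eq:stepsize-SSAR} it converges to a dual optimal $\boldsymbol{\lambda}^\star$ from any starting point. Under Assumptions~\ref{Assumption:about-h0-and-X} and~\ref{Assumption:about-subdifferential-of-h-at-boundary-ponts}, $\mathcal{X}$ is compact. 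Hence $g$ is finite everywhere, the subgradients $\mathbf{A}\mathbf{x}^{(k)}-\mathbf{b}$ are uniformly bounded by $G=\max_{\mathbf{x}\in\mathcal{X}}\|\mathbf{A}\mathbf{x}-\mathbf{b}\|_2$, and the usual convergence results for $\hat\gamma_k$ (e.g., $\sum\hat\gamma_k=\infty$, $\sum\hat\gamma_k^2<\infty$) apply, provided a dual optimal solution exists. I would take existence from the FSGOR setting of~\cite{abeynanda2026characteristics}, e.g., strong duality with attainment via a Slater-type argument.

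The main step is to show that the number of iterations spent on each ray $\mathcal{R}_j$ is finite. The number of rays is at most $N_{\max}$ by step~7, so finiteness per ray implies finiteness of the whole phase. While $\boldsymbol{\lambda}^{(k)}\in\mathcal{R}_j$, the subgradient $\mathbf{d}^{(k)}=\beta\boldsymbol{\tilde\mu}$ is constant and nonzero. It is nonzero because $\boldsymbol{\tilde\eta}=\mathbf{A}\tran\boldsymbol{\tilde\mu}\in\texttt{int}~N_{\mathcal{X}}(\mathbf{\bar x})$, and $\mathbf{0}$ does not lie in the interior of a pointed normal cone. Moreover, by Proposition~\ref{prop:existance-of-RFSGOR-rays-during-iterations} and Lemma~\ref{lemma:proof-affine-over-the-line-segment}, $-g$ is affine along the ray with slope $\beta\boldsymbol{\tilde\mu}$. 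Therefore each step decreases $-g$ by the fixed amount $q^{j-1}\gamma\beta^2\|\boldsymbol{\tilde\mu}\|_2^2>0$, because the step $-q^{j-1}\gamma\beta\boldsymbol{\tilde\mu}$ gives $-g(\boldsymbol{\lambda}^{(k+1)})=-g(\boldsymbol{\lambda}^{(k)})-q^{j-1}\gamma\beta^2\|\boldsymbol{\tilde\mu}\|^2$ by affinity along the ray. Since $-g$ is bounded below by $-g(\boldsymbol{\lambda}^\star)$, which is finite by weak duality since $p^\star$ is finite on compact $\mathcal{X}$, the iterates cannot stay on $\mathcal{R}_j$ forever.

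The hardest point is the direction issue. The subgradient method moves along $-\mathbf{d}^{(k)}=-\beta\boldsymbol{\tilde\mu}$, while the ray points in $+\boldsymbol{\tilde\mu}$. I would resolve this by noting that $-g$ is affine on the whole segment traversed, with slope given by the common subgradient, so the decrement identity holds on the portion actually visited. The finite-time exit argument only needs that $\mathbf{x}^{(k)}=\mathbf{\bar x}$ persists while the iterate is on the ray, which is exactly the membership test in step~3. If that identity seems delicate, a simpler fallback gives the same conclusion: the iterates on $\mathcal{R}_j$ form an arithmetic progression $\boldsymbol{\lambda}^{(k_j)}-t\,q^{j-1}\gamma\beta\boldsymbol{\tilde\mu}$, which is unbounded. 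By the subgradient inequality with the constant subgradient $\beta\boldsymbol{\tilde\mu}$, this forces $-g\to-\infty$, which contradicts boundedness below.

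Having established that the SSAR phase ends at some finite $K$, with $\boldsymbol{\lambda}^{(K)}$ finite, steps~15--18 run~\eqref{eq:FSGOR-subgradient-method} with $\hat\gamma_k$ from $\boldsymbol{\lambda}^{(K)}$. Invoking the convergence guarantee assumed for $\hat\gamma_k$ in~\eqref{eq:stepsize-SSAR}, with the bounded subgradients noted above, then yields $\boldsymbol{\lambda}^{(k)}\to\boldsymbol{\lambda}^\star$, or at least $g(\boldsymbol{\lambda}^{(k)})\to g^\star$ in the sense used by that rule. This completes the argument.
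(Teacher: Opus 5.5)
Your proposal is correct and follows essentially the same route as the paper. Each identified ray is used for only finitely many iterations, because $-g$ drops by the fixed amount $\gamma_j\|\mathbf{d}_j\|_2^2$ per step while staying bounded below by weak duality; at most $N_{\max}$ rays are exploited, and the switching rule $\hat{\gamma}_k$ then gives convergence. The small differences are these: the paper handles $\mathbf{d}_j=\mathbf{0}_p$ by a case split (the iterate is already optimal) instead of arguing $\mathbf{d}_j\neq\mathbf{0}_p$, where pointedness is the wrong justification (what you need is $N_{\mathcal{X}}(\mathbf{\bar x})\neq\R^n$, i.e., $\mathcal{X}\neq\{\mathbf{\bar x}\}$); and your subgradient-inequality argument at the later iterate justifies the decrement more carefully than the paper's appeal to affinity along $\mathcal{R}_j$, given the direction mismatch you correctly spotted.
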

\begin{proof}
Consider an RFSGOR ray $\mathcal{R}_j$ in
Algorithm~\ref{Alg:AdFSGOR-algorithm}, where $j\in\{1,\ldots,N\}$, and let
$k_j$ denote the iteration at which the algorithm starts exploiting
$\mathcal{R}_j$. Let $\mathbf{d}_j$ denote the subgradient of the negative
dual function $-g$ that remains constant and aligns with
$\mathcal{R}_j$. While $\boldsymbol{\lambda}^{(k)}$ lies on
$\mathcal{R}_j$, Algorithm~\ref{Alg:AdFSGOR-algorithm} uses the constant
stepsize $\gamma_j=q^{j-1}\gamma$ (\cf~\eqref{eq:stepsize-SSAR}) and
performs the update $\boldsymbol{\lambda}^{(k+1)}
=\boldsymbol{\lambda}^{(k)}-\gamma_j\mathbf{d}_j$. There are two possible
cases: 1) $\mathbf{d}_j=\mathbf{0}_p$ and 2)
$\mathbf{d}_j\neq\mathbf{0}_p$. In the first case,
$\boldsymbol{\lambda}^{(k)}$ is an optimal solution of
\eqref{eq:FSGOR-dual-problem}, since
$\mathbf{0}_p\in\partial(-g)(\boldsymbol{\lambda}^{(k)})$. In the latter
case, as long as consecutive iterates remain on $\mathcal{R}_j$, we have
\begin{equation}\label{eq:proof-SSAR-1}
-g(\boldsymbol{\lambda}^{(k+1)})-g(\boldsymbol{\lambda}^{(k)})
+
\mathbf{d}_j\tran
\bigl(
\boldsymbol{\lambda}^{(k+1)}-\boldsymbol{\lambda}^{(k)}
\bigr),
\end{equation}
because $-g$ is affine along $\mathcal{R}_j$ with the corresponding
constant subgradient $\mathbf{d}_j$. Then, using
$\boldsymbol{\lambda}^{(k+1)}
=\boldsymbol{\lambda}^{(k)}-\gamma_j\mathbf{d}_j$,
\eqref{eq:proof-SSAR-1} yields
\begin{equation}\label{eq:proof-SSAR-2}
-g(\boldsymbol{\lambda}^{(k+1)})-g(\boldsymbol{\lambda}^{(k)})
-\gamma_j\|\mathbf{d}_j\|_2^2.
\end{equation}
From~\eqref{eq:proof-SSAR-2}, we observe that $-g$ decreases by the
constant amount $\gamma_j\|\mathbf{d}_j\|_2^2>0$ at each iteration while
consecutive iterates remain on $\mathcal{R}_j$. Suppose, for contradiction, that the dual variable iterates remain on
$\mathcal{R}_j$ indefinitely. Then, by recursively applying
\eqref{eq:proof-SSAR-2}, we obtain
\begin{equation}\label{eq:proof-SSAR-3}
-g(\boldsymbol{\lambda}^{(k_j+k)})
-g(\boldsymbol{\lambda}^{(k_j)})
-k\gamma_j\|\mathbf{d}_j\|_2^2.
\end{equation}
It follows from~\eqref{eq:proof-SSAR-3} that
$-g(\boldsymbol{\lambda}^{(k_j+k)})\rightarrow-\infty$ as
$k\rightarrow\infty$. On the other hand, by weak duality,
$g(\boldsymbol{\lambda})\leq p^\star$ for every
$\boldsymbol{\lambda}\in\R^p$, where $p^\star$ denotes the finite optimal
value of the primal problem~\eqref{eq:FSGOR-main-problem}. Consequently,
$-g(\boldsymbol{\lambda})\geq-p^\star>-\infty$ for every
$\boldsymbol{\lambda}\in\R^p$, which contradicts
$-g(\boldsymbol{\lambda}^{(k_j+k)})\rightarrow-\infty$. Therefore, the
dual variable iterates cannot remain on $\mathcal{R}_j$ indefinitely.
Hence, for every RFSGOR ray identified by the SSAR algorithm, either an
optimal dual solution $\boldsymbol{\lambda}^\star$ is reached or the dual
variable iterates exit the ray after a finite number of iterations.

After the dual variable iterates exit $\mathcal{R}_j$, if $j<N$, the
SSAR algorithm checks whether a subsequent RFSGOR ray can be identified
according to
Proposition~\ref{prop:existance-of-RFSGOR-rays-during-iterations}. If no subsequent RFSGOR ray is identified, the SSAR phase terminates, and the algorithm proceeds with the standard dual subgradient method using an existing stepsize rule $\hat{\gamma}_k$, \cf~\eqref{eq:stepsize-SSAR}. If a subsequent RFSGOR ray $\mathcal{R}_{j+1}$ is identified, the SSAR
algorithm exploits $\mathcal{R}_{j+1}$ using the constant stepsize
$\gamma_{j+1}=q^j\gamma$, and the preceding argument for
$\mathcal{R}_j$ applies to $\mathcal{R}_{j+1}$. Since the SSAR algorithm
exploits at most $N$ RFSGOR rays and the dual variable iterates remain on
each identified RFSGOR ray for only a finite number of iterations unless
an optimal dual solution is reached, the SSAR phase terminates after a
finite number of iterations. Thus, if an optimal dual solution has not been reached during the SSAR phase, then, after a finite number of iterations, the algorithm proceeds with the standard dual subgradient method using an existing stepsize rule \(\hat{\gamma}_k\), which guarantees convergence of the dual subgradient method, \(\cf\)~\eqref{eq:stepsize-SSAR}. Therefore, $\boldsymbol{\lambda}^{(k)}$
converges to an optimal solution $\boldsymbol{\lambda}^\star$ of
\eqref{eq:FSGOR-dual-problem}. This completes the proof.
\end{proof}

Then, a primal optimal solution of problem~\eqref{eq:FSGOR-main-problem} can be recovered from a dual optimal solution obtained by Algorithm~\ref{Alg:AdFSGOR-algorithm}, as stated in the following corollary.

\begin{corollary}
Suppose Assumptions~\ref{Assumption:about-h0-and-X} and
\ref{Assumption:about-subdifferential-of-h-at-boundary-ponts} hold, and
strong duality holds. Let $\boldsymbol{\lambda}^\star$ be a dual optimal
solution obtained by Algorithm~\ref{Alg:AdFSGOR-algorithm}. If
$\mathbf{x}^\star\in\arg\min_{\mathbf{x}\in\mathcal{X}}
L(\mathbf{x},\boldsymbol{\lambda}^\star)$ satisfies
$\mathbf{A}\mathbf{x}^\star=\mathbf{b}$, then $\mathbf{x}^\star$ is an
optimal solution of problem~\eqref{eq:FSGOR-main-problem}.
\end{corollary}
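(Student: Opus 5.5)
The plan is the standard Lagrangian saddle-point argument combining weak duality with strong duality. Throughout, $L(\mathbf{x},\boldsymbol{\lambda})=h_0(\mathbf{x})-\boldsymbol{\lambda}\tran(\mathbf{A}\mathbf{x}-\mathbf{b})$ denotes the Lagrangian restricted to $\mathcal{X}$, so that $g(\boldsymbol{\lambda})=\inf_{\mathbf{x}\in\mathcal{X}}L(\mathbf{x},\boldsymbol{\lambda})$. By the preceding proposition, Algorithm~\ref{Alg:AdFSGOR-algorithm} produces a dual optimal $\boldsymbol{\lambda}^\star$, and strong duality gives $g(\boldsymbol{\lambda}^\star)=p^\star$, where $p^\star$ is the optimal value of \eqref{eq:FSGOR-main-problem}. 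Under Assumption~\ref{Assumption:about-h0-and-X}, $p^\star$ is finite and the infimum defining $g$ is attained, so the $\arg\min$ in the statement makes sense.

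\emph{Step 1 (value at $\mathbf{x}^\star$).} Since $\mathbf{x}^\star$ minimizes $L(\cdot,\boldsymbol{\lambda}^\star)$ over $\mathcal{X}$, we have $g(\boldsymbol{\lambda}^\star)=L(\mathbf{x}^\star,\boldsymbol{\lambda}^\star)=h_0(\mathbf{x}^\star)-\boldsymbol{\lambda}^{\star\text{T}}(\mathbf{A}\mathbf{x}^\star-\mathbf{b})$. Using the hypothesis $\mathbf{A}\mathbf{x}^\star=\mathbf{b}$, the linear term vanishes, so $g(\boldsymbol{\lambda}^\star)=h_0(\mathbf{x}^\star)$.

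\emph{Step 2 (feasibility and optimality).} The point $\mathbf{x}^\star$ lies in $\mathcal{X}$ by construction and satisfies $\mathbf{A}\mathbf{x}^\star=\mathbf{b}$, so it is feasible for \eqref{eq:FSGOR-main-problem}; hence $h_0(\mathbf{x}^\star)\geq p^\star$. Combining Step~1 with strong duality gives $h_0(\mathbf{x}^\star)=g(\boldsymbol{\lambda}^\star)=p^\star$. A feasible point attaining the optimal value is optimal, which proves the corollary. As a check, weak duality applied to any feasible $\mathbf{x}$ gives $h_0(\mathbf{x})=L(\mathbf{x},\boldsymbol{\lambda}^\star)\geq g(\boldsymbol{\lambda}^\star)=h_0(\mathbf{x}^\star)$ directly, so the conclusion holds even without first knowing that $p^\star$ is attained.

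\emph{Main obstacle.} There is no substantial obstacle. The only subtlety is interpretive: "obtained by Algorithm~\ref{Alg:AdFSGOR-algorithm}" must be read as an exact dual optimum, in the limit guaranteed by the preceding proposition, rather than a finite-iteration approximation. I would state this explicitly. I would also remark that the hypothesis $\mathbf{A}\mathbf{x}^\star=\mathbf{b}$ is genuinely needed, because $\arg\min_{\mathbf{x}\in\mathcal{X}}L(\mathbf{x},\boldsymbol{\lambda}^\star)$ may contain infeasible points when $h_0$ is not strictly convex.
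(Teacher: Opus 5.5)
Your proof is correct and is essentially the paper's argument: $L(\mathbf{x}^\star,\boldsymbol{\lambda}^\star)=g(\boldsymbol{\lambda}^\star)$, the linear term vanishes because $\mathbf{A}\mathbf{x}^\star=\mathbf{b}$, and strong duality gives $h_0(\mathbf{x}^\star)=p^\star$; you also state explicitly the feasibility of $\mathbf{x}^\star$, which the paper leaves implicit. Your closing weak-duality check is actually a self-contained proof that uses neither strong duality nor the dual optimality of $\boldsymbol{\lambda}^\star$: every feasible $\mathbf{x}$ satisfies $h_0(\mathbf{x})=L(\mathbf{x},\boldsymbol{\lambda}^\star)\geq L(\mathbf{x}^\star,\boldsymbol{\lambda}^\star)=h_0(\mathbf{x}^\star)$.
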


\begin{proof}
Since $\mathbf{x}^\star$ is a Lagrangian minimizer associated with
$\boldsymbol{\lambda}^\star$, we have
$L(\mathbf{x}^\star,\boldsymbol{\lambda}^\star)
=g(\boldsymbol{\lambda}^\star)$. Moreover, since
$\mathbf{A}\mathbf{x}^\star=\mathbf{b}$, we have
$L(\mathbf{x}^\star,\boldsymbol{\lambda}^\star)
=h_0(\mathbf{x}^\star)$. By strong duality,
$g(\boldsymbol{\lambda}^\star)=p^\star$. Therefore,
$h_0(\mathbf{x}^\star)=p^\star$, and hence $\mathbf{x}^\star$ is an
optimal solution of problem~\eqref{eq:FSGOR-main-problem}.
\end{proof}

\section{SSAR-Assisted Dual Decomposition Method and Computation of Initial RFSGOR Rays}
\label{sec:AdFSGOR-Assisted-Dual-Decomposition-Method-and-computation-of-RFSGOR-rays}
In this section, we first discuss how Algorithm~\ref{Alg:AdFSGOR-algorithm} can be adopted to solve problem~\eqref{eq:main-problem} using dual decomposition. Then, we discuss how to determine initial RFSGOR rays associated with the consensus problem~\eqref{eq:consensus-problem}, which are used to initialize the SSAR-assisted dual decomposition algorithm.

\subsection{SSAR-Assisted Dual Decomposition Method}\label{subsec:AdFSGOR-Assisted-Dual-Decomposition-Method}

We consider the equivalent formulation
\eqref{eq:consensus-problem} of problem~\eqref{eq:main-problem}.
First, we derive the relaxed feasibility problem~\eqref{eq:RFSGOR-feasibility-problem-relaxed} associated with~\eqref{eq:consensus-problem}. Note that problem~\eqref{eq:consensus-problem} has the form
\eqref{eq:FSGOR-main-problem}, with
$\mathbf{x}=\mathbf{y}
=[\mathbf{y}_{1}\tran \ \ldots \ \mathbf{y}_{m}\tran]\tran$,
$h_0=f$, $\mathcal{X}=\mathcal{Y}^m$, $\mathbf{A}=\mathbf{C}$
(\cf~\eqref{eq:consenses-problem-matrix-C}), and
$\mathbf{b}=\mathbf{0}_{n(m-1)}$.
Thus, the relaxed feasibility problem is
\begin{equation}
\label{eq:RFSGOR-feasibility-problem-relaxed-associated-with-consensus-problem}
\begin{array}{ll}
\underset{(\boldsymbol{\mu},\boldsymbol{\eta})}{\text{minimize}} & 0\\
\mbox{subject to} &
\mathbf{C}\mathbf{\bar y}
=
\beta(\mathbf{C}\mathbf{C}\tran)^{-1}\mathbf{C}{\boldsymbol\eta}\\
&
{\boldsymbol\eta} \in \bar{N}_{\mathcal{Y}^m}(\mathbf{\bar y})\\
&
\mathbf{C}\tran \boldsymbol{\mu} = {\boldsymbol\eta},
\end{array}
\end{equation}
where $\mathbf{\bar y}\in\texttt{bnd}~\mathcal{Y}^m$ and
$\bar{N}_{\mathcal{Y}^m}(\mathbf{\bar y})
=\texttt{int}~N_{\mathcal{Y}^m}(\mathbf{\bar y})$.
According to Lemma~\ref{lemma:feasibility-of-relaxed-problem-implies-feasibility-of-original-problem},
Remark~\ref{Remark:feasibility-of-relaxed-problem}, and
Remark~\ref{remark:condition-for-choosing-x-bar}, an RFSGOR ray
$\hat{\mathcal{R}}(\mathbf{\bar y})$
(\cf~\eqref{eq:RFSGOR-ray-original}) associated with
problem~\eqref{eq:consensus-problem} can be computed if
$\boldsymbol{\tilde\eta}
=\mathbf{C}\tran\mathbf{C}\mathbf{\bar y}/\beta
\in\texttt{int}~N_{\mathcal{Y}^m}(\mathbf{\bar y})$
for some $\mathbf{\bar y}\in\texttt{bnd}~\mathcal{Y}^m$ satisfying
$\texttt{aff}(N_{\mathcal{Y}^m}(\mathbf{\bar y}))=\R^{mn}$.
However, characterizing the normal cone of the Cartesian product
$\mathcal{Y}^m$ may not be straightforward, even when $\mathcal{Y}$
has a simple form. Therefore, we provide a few results below that
facilitate working with the normal cones of $\mathcal{Y}$ instead of
the normal cone of $\mathcal{Y}^m$.

\begin{lemma}\label{lemma:normal-cone-cartesian-product-results}
Let $\mathbf{\bar y}_i\in\texttt{bnd}~\mathcal{Y}$ for
$i\in\{1,\ldots,m\}$, let
$\mathbf{\bar y}=[\mathbf{\bar y}_1\tran\ldots\mathbf{\bar y}_m\tran]\tran$,
and let
$\boldsymbol{\eta}=[\boldsymbol{\eta}_1\tran\ldots\boldsymbol{\eta}_m\tran]\tran$.
Then the following results hold.
\begin{enumerate}[a)]
    \item
    $N_{\mathcal{Y}^m}(\mathbf{\bar y})
    =
    N_{\mathcal{Y}}(\mathbf{\bar y}_1)\times\ldots\times
    N_{\mathcal{Y}}(\mathbf{\bar y}_m)$
    \item
    $\texttt{int}~N_{\mathcal{Y}^m}(\mathbf{\bar y})
    =
    \texttt{int}~N_{\mathcal{Y}}(\mathbf{\bar y}_1)\times\ldots\times
    \texttt{int}~N_{\mathcal{Y}}(\mathbf{\bar y}_m)$
    
    \item
    $\boldsymbol{\eta}\in
    \texttt{int}~N_{\mathcal{Y}^m}(\mathbf{\bar y})
    \iff \boldsymbol{\eta}_i\in
    \texttt{int}~N_{\mathcal{Y}}(\mathbf{\bar y}_i)$, $\forall~i\in\mathcal{S}$
        
    \item
    $\forall~i{\in}\mathcal{S},\,\texttt{aff}(N_{\mathcal{Y}}(\mathbf{\bar y}_i))=\R^n 
    {\iff}
    \texttt{aff}(N_{\mathcal{Y}^m}(\bar{\mathbf{y}}))=\R^{mn}$
\end{enumerate}
\end{lemma}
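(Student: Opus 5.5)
We prove the four parts in order, each building on the previous one. Throughout, $\mathcal{Y}$ is convex (as implicit in the setting, since $\mathcal{X}=\mathcal{Y}^m$ is required to be convex), and we use the definition $N_{\mathcal{C}}(\mathbf{x})=\{\mathbf{v}\mid \mathbf{v}\tran(\mathbf{z}-\mathbf{x})\leq 0,\ \forall~\mathbf{z}\in\mathcal{C}\}$.

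For part a), we will show two inclusions. For "$\supseteq$", take $\boldsymbol{\eta}_i\in N_{\mathcal{Y}}(\mathbf{\bar y}_i)$ for each $i$. For any $\mathbf{z}=[\mathbf{z}_1\tran\ldots\mathbf{z}_m\tran]\tran\in\mathcal{Y}^m$, summing $\boldsymbol{\eta}_i\tran(\mathbf{z}_i-\mathbf{\bar y}_i)\leq0$ over $i$ gives $\boldsymbol{\eta}\tran(\mathbf{z}-\mathbf{\bar y})\leq0$. For "$\subseteq$", take $\boldsymbol{\eta}\in N_{\mathcal{Y}^m}(\mathbf{\bar y})$ and fix $i$. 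Test against the point $\mathbf{z}$ that equals $\mathbf{\bar y}$ in every block except block $i$, where $\mathbf{z}_i\in\mathcal{Y}$ is arbitrary. This point lies in $\mathcal{Y}^m$ because each $\mathbf{\bar y}_l\in\mathcal{Y}$; here we use that $\mathbf{\bar y}_l\in\texttt{bnd}~\mathcal{Y}\subseteq\mathcal{Y}$, which holds since $\mathcal{Y}$ is closed (it is compact under Assumption~\ref{Assumption:about-h0-and-X} applied to $\mathcal{X}=\mathcal{Y}^m$). The test yields $\boldsymbol{\eta}_i\tran(\mathbf{z}_i-\mathbf{\bar y}_i)\leq0$, so $\boldsymbol{\eta}_i\in N_{\mathcal{Y}}(\mathbf{\bar y}_i)$.

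Part b) then reduces to a purely topological fact: in the product topology on $\R^{mn}=\R^n\times\cdots\times\R^n$, the interior of a finite Cartesian product equals the product of the interiors. We prove both inclusions using boxes of balls.
\begin{itemize}
\item If $\boldsymbol{\eta}_i$ has a ball $B(\boldsymbol{\eta}_i,\epsilon_i)\subseteq N_{\mathcal{Y}}(\mathbf{\bar y}_i)$ for each $i$, then the ball of radius $\min_i\epsilon_i$ around $\boldsymbol{\eta}$ lies in the product, because each block of a point in it deviates by at most that radius.
\item Conversely, if $B(\boldsymbol{\eta},\epsilon)$ lies in the product, perturb only block $i$ by less than $\epsilon$ to see that $B(\boldsymbol{\eta}_i,\epsilon)\subseteq N_{\mathcal{Y}}(\mathbf{\bar y}_i)$.
\end{itemize}
Part c) is a restatement of b) in elementwise form and needs no further argument.

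For part d), we plan to use the identity $\texttt{aff}(\mathcal{C}_1\times\cdots\times\mathcal{C}_m)=\texttt{aff}(\mathcal{C}_1)\times\cdots\times\texttt{aff}(\mathcal{C}_m)$ for nonempty sets; each normal cone is nonempty since it contains $\mathbf{0}_n$. A cleaner route avoids this identity. By the remark preceding Remark~\ref{remark:condition-for-choosing-x-bar}, for a nonempty convex set $\texttt{aff}=$ whole space is equivalent to nonempty interior. Normal cones are convex, so
\[
\texttt{aff}(N_{\mathcal{Y}^m}(\mathbf{\bar y}))=\R^{mn}\iff \texttt{int}~N_{\mathcal{Y}^m}(\mathbf{\bar y})\neq\emptyset .
\]
By b), this interior is nonempty if and only if every factor $\texttt{int}~N_{\mathcal{Y}}(\mathbf{\bar y}_i)$ is nonempty, since a finite product is nonempty exactly when all factors are. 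Applying the same equivalence to each factor gives $\texttt{aff}(N_{\mathcal{Y}}(\mathbf{\bar y}_i))=\R^n$ for all $i$, which is the claim. The only real care point is the "$\subseteq$" direction in a), which needs the feasibility of the block-substituted test point, i.e., $\mathbf{\bar y}_l\in\mathcal{Y}$ via closedness. Everything else is routine.
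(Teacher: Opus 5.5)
Your proof is correct; parts a)--c) follow the paper, and d) takes a different route.

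\textbf{Parts a)--c).} Parts b) and c) are handled exactly as in the paper: the interior of a finite product equals the product of the interiors, which you verify with balls. For a), the paper uses three facts:
\begin{itemize}
\item $\delta_{\mathcal{Y}^m}(\mathbf{y})=\sum_i\delta_{\mathcal{Y}}(\mathbf{y}_i)$;
\item the separable-sum rule $\partial\delta_{\mathcal{Y}^m}(\mathbf{\bar y})=\prod_i\partial\delta_{\mathcal{Y}}(\mathbf{\bar y}_i)$;
\item $\partial\delta_{\mathcal{X}}=N_{\mathcal{X}}$.
\end{itemize}
Your block-substituted test points are exactly the argument behind that separable-sum rule. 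So this is the same idea, made self-contained. Your remark that $\mathbf{\bar y}_l\in\mathcal{Y}$ is needed also applies to the paper, which uses it implicitly.

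\textbf{Part d).} The paper derives d) from a) via the identity $\texttt{aff}(\mathcal{B}_1\times\cdots\times\mathcal{B}_m)=\prod_i\texttt{aff}(\mathcal{B}_i)$. This is purely algebraic and holds for arbitrary nonempty sets, with no convexity needed. You instead derive d) from b). You use the characterization $\texttt{aff}(\mathcal{C})=\R^n\iff\texttt{int}~\mathcal{C}\neq\emptyset$ for nonempty convex $\mathcal{C}$, stated in the paper just before Remark~\ref{remark:condition-for-choosing-x-bar}. You apply it once to the product cone and once to each factor, and note that a finite product is nonempty iff all factors are.

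\textbf{Comparison.} Your route reuses b) and avoids proving the affine-hull product identity. That identity is easy, but its nontrivial inclusion requires merging affine combinations from different factors using product weights. The cost is that you rely on a deeper fact: nonempty convex sets have nonempty relative interior. This is valid only because the sets are convex, which is harmless here since normal cones are always convex cones containing $\mathbf{0}$. Your route also matches how d) is used later in the paper, always together with that interior characterization and part c). The paper's route is more general and ties d) directly to the set identity in a).
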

\begin{proof}\label{proof:normal-cone-cartesian-product-results}
For $\mathbf{y}=[\mathbf{y}_1\tran\ldots\mathbf{y}_m\tran]\tran$, we have
$\delta_{\mathcal{Y}^m}(\mathbf{y})
=
\sum_{i=1}^m\delta_{\mathcal{Y}}(\mathbf{y}_i)$.
Therefore, $\partial\delta_{\mathcal{Y}^m}(\mathbf{\bar y})
=
\prod_{i=1}^m
\partial\delta_{\mathcal{Y}}(\mathbf{\bar y}_i)$. Then, the first result follows from
$\partial\delta_{\mathcal{X}}(\mathbf{\bar x})
=
N_{\mathcal{X}}(\mathbf{\bar x})$
for any convex set $\mathcal{X}\subseteq\R^n$ and
$\mathbf{\bar x}\in\mathcal{X}$.
The second result follows from the first result and $\texttt{int}~(\mathcal{A}_1\times\ldots\times\mathcal{A}_m)
=
\prod_{i=1}^m\texttt{int}~\mathcal{A}_i$
for any sets
$\mathcal{A}_i\subseteq\R^n$.
The third result follows directly from the second result and the
definition of the Cartesian product of sets.
The fourth result follows from the first result together with
$\texttt{aff}(\mathcal{B}_1\times\ldots\times\mathcal{B}_m)
=
\prod_{i=1}^m\texttt{aff}(\mathcal{B}_i)$
for any nonempty sets $\mathcal{B}_i\subseteq\R^n$.
\end{proof}

From this point onward, we consider the constraint set $\mathcal{Y}$ in
problem~\eqref{eq:consensus-problem} to be a bounded polyhedron, since it is common in engineering
applications, including distributed resource allocation, network flow
optimization, economic dispatch, and distributed model predictive control.
Then,
\begin{equation}\label{eq:polyhedron-constraint-set}
\mathcal{Y}=\{\mathbf{z}\in\R^n \mid \mathbf{D}\mathbf{z}\leq \mathbf{c}\},
\end{equation}
where $\mathbf{D}\in\R^{p\times n}$ and $\mathbf{c}\in\R^p$.
For such a polyhedron, a boundary point
$\mathbf{\bar y}_i\in\texttt{bnd}~\mathcal{Y}$ satisfies
$\texttt{aff}(N_{\mathcal{Y}}(\mathbf{\bar y}_i))=\R^n$
if and only if $\mathbf{\bar y}_i$ is a vertex of $\mathcal{Y}$.
Consequently, by the fourth result of
Lemma~\ref{lemma:normal-cone-cartesian-product-results}, we obtain
$\mathbf{\bar y}\in\texttt{bnd}~\mathcal{Y}^m$ satisfying
$\texttt{aff}(N_{\mathcal{Y}^m}(\mathbf{\bar y}))=\R^{mn}$
by selecting each $\mathbf{\bar y}_i$ to be a vertex of $\mathcal{Y}$.
Furthermore, by the third result of
Lemma~\ref{lemma:normal-cone-cartesian-product-results}, we check
whether $\boldsymbol{\tilde\eta}\in\texttt{int}~
N_{\mathcal{Y}^m}(\mathbf{\bar y})$ by checking whether each component $\boldsymbol{\tilde\eta}_i$ belongs to
$\texttt{int}~N_{\mathcal{Y}}(\mathbf{\bar y}_i)$,
for $i\in\{1,\ldots,m\}$.

The implementation of Algorithm~\ref{Alg:AdFSGOR-algorithm} for solving
problem~\eqref{eq:consensus-problem} (equivalently, problem~\eqref{eq:main-problem})
using dual decomposition (\cf~Algorithm~\ref{Alg:Dual-Decomposition-Algorithm})
requires a well-defined communication structure between the CS and the agents
in the distributed system. Specifically, the communication structure is
important for determining whether $\boldsymbol{\lambda}^{(k)}$ lies on an
RFSGOR ray and for employing the stepsize rule~\eqref{eq:stepsize-SSAR}
in the dual decomposition setting. In this respect, we propose
Algorithm~\ref{Alg:AdFSGOR-assisted_dual-decomposition-method}, a variant of
the standard dual decomposition (Algorithm~\ref{Alg:Dual-Decomposition-Algorithm}) that integrates the
SSAR method (Algorithm~\ref{Alg:AdFSGOR-algorithm}) into the decomposition framework. The algorithm steps are described below.

\begin{algorithm}[!t]
	\caption{SSAR-Assisted Dual Decomposition Algorithm} 
	\begin{algorithmic}[1]
	     \Require  A solution $\tilde{\boldsymbol{\mu}}$ obtained from problem~\eqref{eq:RFSGOR-feasibility-problem-relaxed-associated-with-consensus-problem} corresponding to some $\mathbf{\bar y}\in\texttt{ver}~\mathcal{Y}^m$, $c >0$, $\beta>0$, $q\in(0,1)$, $\gamma>0$, and $N_{\max}\in\mathbb{Z}_+$. 
       \State $\forall~i$, agent $i$ and the CS compute $\boldsymbol{\lambda}^{(0)}=c\tilde{\boldsymbol{\mu}}$.
	    \State $k=0$, $j=1$, $\boldsymbol{\lambda}_0^{(r)}=\boldsymbol{\lambda}_m^{(r)}=\mathbf{0}_n, \ r\in\mathbb{Z}^0_+$.
      \State $\forall~i$, agent $i$ computes:  $$\mathbf{y}_i^{(k)}\in\underset{\mathbf{y}_i\in\mathcal{Y}}{\arg\min}~\frac{1}{m}f_i(\mathbf{y}_i)+ {\big(\boldsymbol{\lambda}^{(k)}_i-\boldsymbol{\lambda}^{(k)}_{i-1}\big)}\tran \mathbf{y}_i.$$ 
\State $\forall~i$, agent $i$ transmits one bit $b_i$ to the CS, where $b_i=0$ if $\mathbf{y}_i^{(k)}=\mathbf{\bar y}_i$ and $b_i=1$ otherwise.
\State CS computes $\bar b=\max_{i\in\mathcal{S}} b_i$ and broadcasts $\bar b$ to each agent $i$. 
\If {$\bar b=0$,} \Comment{$\boldsymbol{\lambda}^{(k)}$ lies on RFSGOR ray $\mathcal{R}_j$} 
\State $\forall i$, agent $i$ and the CS perform \eqref{eq:FSGOR-subgradient-method} with $\gamma_k=q^{j-1}\gamma$ and $\mathbf{d}^{(k)}=\beta\boldsymbol{\tilde\mu}$.
\State Set $k:=k+1$ and go to step~3.
\Else  \Comment{$\boldsymbol{\lambda}^{(k)}$ exits RFSGOR ray $\mathcal{R}_j$}
\State $\forall~i$, agent $i$ transmits one bit $b_i$ to the CS, where $b_i=0$ if $\mathbf{y}_i^{(k)}\in\texttt{ver}~\mathcal{Y}$ and $b_i=1$ otherwise.
\State CS computes $\bar b=\max_{i\in\mathcal{S}} b_i$ and broadcasts $\bar b$ to each agent $i$. 
\If {$\bar b=1$,} \Comment{SSAR exploitation phase terminates}
\State $\forall~i$, agent $i$ transmits $\mathbf{ y}_i^{(k)}$ to the CS. 
\State CS performs~\eqref{eq:FSGOR-subgradient-method} with $\gamma_k{=}\hat{\gamma}_k$ (\cf \eqref{eq:stepsize-SSAR}) and $\mathbf{d}^{(k)}=\mathbf{C}\mathbf{y}^{(k)}$.
\State CS broadcasts $\boldsymbol{\lambda}^{(k+1)}$ to each agent $i$ and go to step~$28$. 
\Else \Comment{$\forall~i, \, \mathbf{y}_i^{(k)}\in\texttt{ver}~\mathcal{Y}$}
\State $\forall~i$, agent $i$ transmits the index of $\mathbf{ y}_i^{(k)}$ to the CS. 
\State The CS broadcasts the indices of
$\mathbf{y}_i^{(k)}$, $i\in\mathcal{S}$,
to all agents. \Comment{Then all agents have $\mathbf{y}^{(k)}{=}[\mathbf{y}_{1}^{(k)\text{T}} \ldots\mathbf{y}_{m}^{(k)\text{T}}]\tran$}
\State Both the CS and the agents compute
$\tilde{\boldsymbol{\mu}}
=\mathbf{C}\mathbf{y}^{(k)}/\beta$
and
$\tilde{\boldsymbol{\eta}}
=\mathbf{C}\tran\tilde{\boldsymbol{\mu}}$.
\If {$j<N_{\max}$ and $\forall~i,\,\boldsymbol{\tilde\eta}_i\in\texttt{int}~N_{\mathcal{Y}}(\mathbf{y}_i^{(k)})$,}
\Comment{$\boldsymbol{\lambda}^{(k)}$ lies on RFSGOR ray $\mathcal{R}_{j+1}$ (\cf~Corollary~\ref{cor:existance-of-RFSGOR-rays-during-iterations-consensus-problem})}
\State Both the CS and the agents perform
\eqref{eq:FSGOR-subgradient-method}
with $\gamma_k=q^j\gamma$ and
$\mathbf{d}^{(k)}
=\beta\tilde{\boldsymbol{\mu}}$.
\State $\forall~i$, agent $i$ sets $\mathbf{\bar y}_i=\mathbf{y}_i^{(k)}$, both the CS and the agents set $k:=k+1$, $j:=j+1$, and go to step~$3$.
\Else \Comment{SSAR exploitation phase terminates}  
\State Both the CS and agents perform~\eqref{eq:FSGOR-subgradient-method} with $\gamma_k{=}\hat{\gamma}_k$ (\cf \eqref{eq:stepsize-SSAR}) and $\mathbf{d}^{(k)}=\mathbf{C}\mathbf{y}^{(k)}$, and go to step~$28$.
 \EndIf      %
\EndIf 
\EndIf 
\State Switch to Alg.~\ref{Alg:Dual-Decomposition-Algorithm} with $\boldsymbol{\lambda}{=}\boldsymbol{\lambda}^{(k+1)}$ and $\gamma_k{=}\hat{\gamma}_k$, \cf\eqref{eq:stepsize-SSAR}.
      	\end{algorithmic} 
	\label{Alg:AdFSGOR-assisted_dual-decomposition-method}
\end{algorithm} 
\subsubsection{Description of Algorithm~\ref{Alg:AdFSGOR-assisted_dual-decomposition-method}}
First, the algorithm requires
problem~\eqref{eq:RFSGOR-feasibility-problem-relaxed-associated-with-consensus-problem}
to be feasible for some
$\mathbf{\bar y}\in\texttt{ver}~\mathcal{Y}^m$. An initial RFSGOR ray $\mathcal{R}_1$ associated with
problem~\eqref{eq:consensus-problem} can then be obtained according to
Lemma~\ref{lemma:feasibility-of-relaxed-problem-implies-feasibility-of-original-problem}. Since $\mathbf{C}$, $\mathcal{Y}$, $\mathbf{\bar y}$, and $\beta$ are known
to both the CS and the agents, they can independently solve
\eqref{eq:RFSGOR-feasibility-problem-relaxed-associated-with-consensus-problem}
to obtain $\boldsymbol{\tilde\mu}$ and compute
$\boldsymbol{\lambda}^{(0)}$, \cf~step~$1$. The agents then compute their local solutions $\mathbf{y}_i^{(k)}$,
\cf~step~$3$. In step~$4$, each agent communicates to the CS whether
$\mathbf{y}_i^{(k)}=\mathbf{\bar y}_i$. If $\bar b=0$, then
$\mathbf{y}^{(k)}=\mathbf{\bar y}$, and hence
$\boldsymbol{\lambda}^{(k)}$ lies on $\mathcal{R}_j$, \cf~steps~$5$--$6$. The corresponding
constant stepsize is then employed, \cf~steps~$7$--$8$. Once $\boldsymbol{\lambda}^{(k)}$ exits
$\mathcal{R}_j$ (\cf~step~$9$), the subsequent steps
(\cf~steps~$10$--$23$) check whether a subsequent RFSGOR ray
$\mathcal{R}_{j+1}$ containing $\boldsymbol{\lambda}^{(k)}$ can be
identified. If no subsequent RFSGOR ray is identified or $j=N_{\max}$, the SSAR
exploitation phase terminates (\cf~steps~$12$ and~$23$), and the algorithm
proceeds according to the standard dual decomposition method,
\cf~steps~$13$--$15$ and~$24$. In particular, step~$20$ checks a
sufficient condition for identifying a subsequent RFSGOR ray
$\mathcal{R}_{j+1}$ containing $\boldsymbol{\lambda}^{(k)}$, as established
by the following result.

\begin{corollary}\label{cor:existance-of-RFSGOR-rays-during-iterations-consensus-problem}
Suppose that each $f_i$ in~\eqref{eq:consensus-problem} is convex,
lower semicontinuous, and proper, $\mathcal{Y}$ is a bounded polyhedron, and
Assumption~\ref{Assumption:about-subdifferential-of-h-at-boundary-ponts}
holds with $h_0$ and $\mathcal{X}$ replaced by $f$ and
$\mathcal{Y}^m$, respectively, \cf~\eqref{eq:consensus-problem}.
Let $\{\boldsymbol{\lambda}^{(k)}\}$ be the sequence of dual variable
iterates generated by
Algorithm~\ref{Alg:AdFSGOR-assisted_dual-decomposition-method}, where
$\boldsymbol{\lambda}^{(k)}=
[\boldsymbol{\lambda}_1^{(k)\text{T}}\ \ldots
\boldsymbol{\lambda}_{m-1}^{(k)\text{T}}]\tran$, and let
$\mathbf{y}_i^{(k)}$ be a minimizer of the local subproblem of agent~$i$
corresponding to $\boldsymbol{\lambda}^{(k)}$ for all $i\in\mathcal{S}$,
\cf~\eqref{eq:dual-problem-of-consensus-problem} and step~$3$ of
Algorithm~\ref{Alg:AdFSGOR-assisted_dual-decomposition-method}.
Suppose that
$\mathbf{y}_i^{(k)}\in\texttt{ver}~\mathcal{Y}$ and
$\boldsymbol{\tilde\eta}_i\in
\texttt{int}~N_{\mathcal{Y}}(\mathbf{y}_i^{(k)})$
for all $i\in\mathcal{S}$, where
$\boldsymbol{\tilde\eta}=
[\boldsymbol{\tilde\eta}_1\tran\ldots
\boldsymbol{\tilde\eta}_m\tran]\tran=
\mathbf{C}\tran\mathbf{C}\mathbf{y}^{(k)}/\beta$.
Then $\boldsymbol{\lambda}^{(k)}$ lies on an RFSGOR ray.
\end{corollary}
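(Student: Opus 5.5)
The corollary will follow by specializing Proposition~\ref{prop:existance-of-RFSGOR-rays-during-iterations} to the consensus formulation~\eqref{eq:consensus-problem}. Problem~\eqref{eq:consensus-problem} is an instance of~\eqref{eq:FSGOR-main-problem} with $\mathbf{x}=\mathbf{y}$, $h_0=f$, $\mathcal{X}=\mathcal{Y}^m$, $\mathbf{A}=\mathbf{C}$ and $\mathbf{b}=\mathbf{0}_{n(m-1)}$. So the work is to check the hypotheses of that proposition and to translate the local, agent-wise conditions into the global ones.

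\emph{Assumptions.} Since each $f_i$ is convex, lower semicontinuous and proper and is finite on $\R^n$, the average $f$ has the same properties. Since $\mathcal{Y}$ is a bounded polyhedron, it is convex and compact, and hence so is $\mathcal{Y}^m$. Thus Assumption~\ref{Assumption:about-h0-and-X} holds. Assumption~\ref{Assumption:about-subdifferential-of-h-at-boundary-ponts} holds by hypothesis. We must also check that $\mathbf{C}$ has full row rank $n(m-1)<nm$, which is immediate from its block bidiagonal structure~\eqref{eq:consenses-problem-matrix-C}.

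\emph{Lagrangian minimizer.} The Lagrangian of~\eqref{eq:consensus-problem} separates across agents, as in~\eqref{eq:dual-problem-of-consensus-problem}. Stacking the local minimizers $\mathbf{y}_i^{(k)}$ from step~$3$ therefore gives $\mathbf{y}^{(k)}$, a minimizer of the Lagrangian over $\mathcal{Y}^m$ at $\boldsymbol{\lambda}^{(k)}$, i.e., \eqref{eq:minimizer-of-the-Lagrangian}. I would match the sign convention $(\boldsymbol{\lambda}_i-\boldsymbol{\lambda}_{i-1})\tran\mathbf{y}_i$ against $-\boldsymbol{\lambda}\tran\mathbf{C}\mathbf{y}$ carefully, up to the sign absorbed into the definition of $\mathbf{C}$. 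This is only a bookkeeping point, since in the end only membership in the argmin matters.

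\emph{Boundary.} Each $\mathbf{y}_i^{(k)}$ is a vertex of $\mathcal{Y}$, so $\mathbf{y}_i^{(k)}\in\texttt{bnd}~\mathcal{Y}$. A point whose blocks all lie on $\texttt{bnd}~\mathcal{Y}$ is not interior to $\mathcal{Y}^m$, so $\mathbf{y}^{(k)}\in\texttt{bnd}~\mathcal{Y}^m$. In fact a single boundary block already suffices for this.

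\emph{Feasibility.} By Remark~\ref{Remark:feasibility-of-relaxed-problem}, problem~\eqref{eq:RFSGOR-feasibility-problem-relaxed-associated-with-consensus-problem} with $\mathbf{\bar y}=\mathbf{y}^{(k)}$ is feasible if and only if $\boldsymbol{\tilde\eta}=\mathbf{C}\tran\mathbf{C}\mathbf{y}^{(k)}/\beta\in\texttt{int}~N_{\mathcal{Y}^m}(\mathbf{y}^{(k)})$. By Lemma~\ref{lemma:normal-cone-cartesian-product-results}(c), this holds if and only if $\boldsymbol{\tilde\eta}_i\in\texttt{int}~N_{\mathcal{Y}}(\mathbf{y}_i^{(k)})$ for all $i\in\mathcal{S}$, which is exactly the stated hypothesis. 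The witness is $\boldsymbol{\mu}=\mathbf{C}\mathbf{y}^{(k)}/\beta$, $\boldsymbol{\eta}=\mathbf{C}\tran\boldsymbol{\mu}$. Substituting it into the first constraint, and using the invertibility of $\mathbf{C}\mathbf{C}\tran$, gives $\beta(\mathbf{C}\mathbf{C}\tran)^{-1}\mathbf{C}\mathbf{C}\tran\mathbf{C}\mathbf{y}^{(k)}/\beta=\mathbf{C}\mathbf{y}^{(k)}$, as required. Proposition~\ref{prop:existance-of-RFSGOR-rays-during-iterations} then yields that $\boldsymbol{\lambda}^{(k)}$ lies on an RFSGOR ray.

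None of these steps should be a real obstacle. The only mild care points are the interior of the product normal cone, which is handled by Lemma~\ref{lemma:normal-cone-cartesian-product-results}(b)--(c), and confirming that the vertex condition makes the interior nonempty, by Lemma~\ref{lemma:normal-cone-cartesian-product-results}(d). The latter is not logically needed, since membership of $\boldsymbol{\tilde\eta}_i$ in the interior is assumed outright.
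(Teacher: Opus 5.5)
Your proposal is correct and follows the paper's proof: vertex blocks give $\mathbf{y}^{(k)}\in\texttt{bnd}~\mathcal{Y}^m$, Lemma~\ref{lemma:normal-cone-cartesian-product-results}(c) together with Remark~\ref{Remark:feasibility-of-relaxed-problem} gives feasibility of the relaxed problem at $\mathbf{\bar y}=\mathbf{y}^{(k)}$, and Proposition~\ref{prop:existance-of-RFSGOR-rays-during-iterations} concludes. The paper additionally verifies the affine-hull condition via Lemma~\ref{lemma:normal-cone-cartesian-product-results}(d), which, as you note, is not needed to invoke the proposition. Conversely, you are more explicit than the paper about Assumption~\ref{Assumption:about-h0-and-X}, the rank of $\mathbf{C}$, and the sign convention. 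The sign issue is indeed harmless, since $\boldsymbol{\tilde\eta}=\mathbf{C}\tran\mathbf{C}\mathbf{y}^{(k)}/\beta$ is unchanged under $\mathbf{C}\to-\mathbf{C}$.
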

\begin{proof}
Since $\mathbf{y}_i^{(k)}\in\texttt{ver}~\mathcal{Y}$ for all
$i\in\mathcal{S}$, we have
$\mathbf{y}^{(k)}\in\texttt{ver}~\mathcal{Y}^m\subseteq
\texttt{bnd}~\mathcal{Y}^m$, where
$\mathbf{y}^{(k)}=
[\mathbf{y}_1^{(k)\text{T}}\ \ldots
\mathbf{y}_m^{(k)\text{T}}]\tran$.
Moreover, since $\mathcal{Y}$ is a polyhedron,
$\texttt{aff}(N_{\mathcal{Y}}(\mathbf{y}_i^{(k)}))=\R^n$
for all $i\in\mathcal{S}$. Thus, by the fourth result of
Lemma~\ref{lemma:normal-cone-cartesian-product-results},
$\texttt{aff}(N_{\mathcal{Y}^m}(\mathbf{y}^{(k)}))=\R^{mn}$. Hence, $\mathbf{y}^{(k)}$ satisfies the condition in
Remark~\ref{remark:condition-for-choosing-x-bar} with
$\mathcal{X}=\mathcal{Y}^m$. Furthermore, since
$\boldsymbol{\tilde\eta}_i\in
\texttt{int}~N_{\mathcal{Y}}(\mathbf{y}_i^{(k)})$
for all $i\in\mathcal{S}$, the third result of
Lemma~\ref{lemma:normal-cone-cartesian-product-results} gives
$\boldsymbol{\tilde\eta}\in
\texttt{int}~N_{\mathcal{Y}^m}(\mathbf{y}^{(k)})$.
Therefore, by Remark~\ref{Remark:feasibility-of-relaxed-problem},
problem~\eqref{eq:RFSGOR-feasibility-problem-relaxed-associated-with-consensus-problem}
is feasible with $\mathbf{\bar y}=\mathbf{y}^{(k)}$. Hence, using Proposition~\ref{prop:existance-of-RFSGOR-rays-during-iterations},
we can conclude that $\boldsymbol{\lambda}^{(k)}$ lies on an RFSGOR ray.
\end{proof}

\subsection{The Computation of Initial RFSGOR Rays}\label{Sec:computation-of-RFSGOR-rays}
We consider computing initial RFSGOR rays under $\ell_1$-norm and $\ell_{\infty}$-norm constraint sets. Since the constraint set $\mathcal{Y}$ in problem~\eqref{eq:consensus-problem} is considered to be a polyhedron (\cf~\eqref{eq:polyhedron-constraint-set}), the normal cone of $\mathcal{Y}$ at a vertex $\mathbf{\bar z}$ is given by
\begin{equation}\label{eq:normal-cone-polyhedron-set}
N_{\mathcal{Y}}(\mathbf{\bar z})=
\Big\{
\textstyle\sum_{i\in\mathcal{A}(\mathbf{\bar z})}
\alpha_i\mathbf{d}_i
\ \Big| \ 
\alpha_i\geq 0
\Big\},
\end{equation}
where
$\mathcal{A}(\mathbf{\bar z})=
\{i \mid \mathbf{d}_i\tran\mathbf{\bar z}=c_i\}$
is the set of active constraints at $\mathbf{\bar z}$,
$\mathbf{d}_i\tran$ is the $i$th row of $\mathbf{D}$, and $c_i$ is the
$i$th component of $\mathbf{c}$,
\cf~\eqref{eq:polyhedron-constraint-set}.
The following remarks provide the normal cones of the
$\ell_1$-norm and $\ell_{\infty}$-norm constraint sets at their vertices,
which can be derived from~\eqref{eq:normal-cone-polyhedron-set}.

\begin{remark}[$\mathcal{Y}$ is an $\ell_1$-ball]
\label{remark:normal-cone-l1-ball}
Let $\mathcal{Y}=\{\mathbf{z}\in\R^n \mid \|\mathbf{z}\|_1\leq a\}$,
where $a>0$. Then, the vertices of $\mathcal{Y}$ are given by
$\mathbf{\bar z}=\pm ae_j$, $j=1,\ldots,n$, where $e_j$ is the unit
vector in the $j$th coordinate direction. Then, $N_{\mathcal{Y}}(\pm ae_j)
=
\{\mathbf{n}\in\R^n \mid
\pm n_j\geq|n_i|,\ i\neq j\}$, where $\mathbf{n}=[n_1\ldots n_n]\tran$.
Moreover,
\begin{equation}\label{eq:interior-normal-cone-l1-ball}
\texttt{int}~N_{\mathcal{Y}}(\pm ae_j)
=
\{\mathbf{n}\in\R^n \mid
\pm n_j>|n_i|,\ i\neq j\}.
\end{equation}
\end{remark}

\begin{remark}[$\mathcal{Y}$ is an $\ell_\infty$-ball]
\label{remark:normal-cone-l-infinity-ball}
Let $\mathcal{Y}=\{\mathbf{z}\in\R^n \mid \|\mathbf{z}\|_\infty\leq a\}$,
where $a>0$. Then, the vertices of $\mathcal{Y}$ are given by
$\mathbf{\bar z}=a[t_1\ldots t_n]\tran$, where $t_i\in\{-1,1\}$.
Then, $N_{\mathcal{Y}}(\mathbf{\bar z})
=
\{\mathbf{n}\in\R^n \mid t_i n_i\geq0,\, \forall i\in\{1,\ldots,n\}\}$, where $\mathbf{n}=[n_1\ldots n_n]\tran$. Moreover,
\begin{equation}\label{eq:interior-normal-cone-l-infinity-ball}
\texttt{int}~N_{\mathcal{Y}}(\mathbf{\bar z})
=
\{\mathbf{n}\in\R^n \mid t_i n_i>0,\, \forall i\in\{1,\ldots,n\}\}.
\end{equation}
\end{remark}

Next, we provide the following result, which is useful for computing
initial RFSGOR rays associated with problem~\eqref{eq:consensus-problem}.

\begin{remark}\label{remark:eta-user-component}
Let $\mathbf{\bar y}
=[\mathbf{\bar y}_1\tran\ldots\mathbf{\bar y}_m\tran]\tran$, where
$\mathbf{\bar y}_i\in\texttt{bnd}~\mathcal{Y}$ for
$i\in\{1,\ldots,m\}$. Then, the solution
$\boldsymbol{\tilde\eta}
=[\boldsymbol{\tilde\eta}_1\tran\ldots\boldsymbol{\tilde\eta}_m\tran]\tran$
obtained by solving the equality constraints in~\eqref{eq:RFSGOR-feasibility-problem-relaxed-associated-with-consensus-problem}
has the form
\begin{equation}\label{eq:eta-user-component}
\boldsymbol{\tilde\eta}_i
=
\frac{1}{\beta}
\begin{cases}
\displaystyle
\mathbf{\bar y}_1-\mathbf{\bar y}_2
&\!\!; \quad i=1\\
\displaystyle
2\mathbf{\bar y}_i-\mathbf{\bar y}_{i-1}-\mathbf{\bar y}_{i+1}
&\!\!; \quad i=2,\ldots,m-1\\
\displaystyle
\mathbf{\bar y}_m-\mathbf{\bar y}_{m-1}
&\!\!; \quad i=m.
\end{cases}
\end{equation}
\end{remark}

\begin{proof}
The result follows directly from
$\boldsymbol{\eta}=\mathbf{C}\tran\mathbf{C}\mathbf{\bar y}/\beta$,
where $\mathbf{C}$ is given by~\eqref{eq:consenses-problem-matrix-C}.
\end{proof}
Then, the following proposition establishes that an RFSGOR ray associated
with problem~\eqref{eq:consensus-problem} can be computed when
$\mathcal{Y}$ is defined by an $\ell_1$-norm constraint,
\cf~Remark~\ref{remark:normal-cone-l1-ball}.
\begin{prop}\label{Prop:RFSGOR-ray-l-1-norm-constraint}
Suppose that each $f_i$ in~\eqref{eq:consensus-problem} is convex,
lower semicontinuous, and proper, and that
Assumption~\ref{Assumption:about-subdifferential-of-h-at-boundary-ponts}
holds with $h_0$ and $\mathcal{X}$ replaced by $f$ and
$\mathcal{Y}^m$, respectively, \cf~\eqref{eq:consensus-problem}.
Let $\mathcal{Y}$ be defined by an $\ell_1$-norm constraint,
\cf~Remark~\ref{remark:normal-cone-l1-ball}.
Let $\bar{\mathbf y}_i=as_i\mathbf{e}_j$, $i\in\mathcal{S}$,
where $\mathbf{e}_j$ is the unit vector in the $j$th coordinate
direction in $\R^n$, $j\in\{1,\ldots,n\}$, and
$s_i\in\{-1,1\}$ satisfy $s_1\ne s_2$, $s_m\ne s_{m-1}$,
and, for every $i\in\{2,\ldots,m-1\}$,
$s_i\ne s_{i-1}$ or $s_i\ne s_{i+1}$.
Then, there exists an RFSGOR ray $\hat{\mathcal{R}}(\bar{\mathbf y})$
(\cf~\eqref{eq:RFSGOR-ray-original}) associated with
problem~\eqref{eq:consensus-problem}, where
$\bar{\mathbf y}
=[\bar{\mathbf y}_1\tran\ldots\bar{\mathbf y}_m\tran]\tran$.
\end{prop}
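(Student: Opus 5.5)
The plan is to reduce the statement to a componentwise check via the results already established. By Lemma~\ref{lemma:feasibility-of-relaxed-problem-implies-feasibility-of-original-problem} and Remark~\ref{Remark:feasibility-of-relaxed-problem}, an RFSGOR ray $\hat{\mathcal{R}}(\bar{\mathbf y})$ exists as soon as problem~\eqref{eq:RFSGOR-feasibility-problem-relaxed-associated-with-consensus-problem} is feasible. Feasibility is equivalent to
\[
\boldsymbol{\tilde\eta}=\mathbf{C}\tran\mathbf{C}\bar{\mathbf y}/\beta\in\texttt{int}~N_{\mathcal{Y}^m}(\bar{\mathbf y}).
\]
Each $\bar{\mathbf y}_i=as_i\mathbf{e}_j$ is a vertex of the $\ell_1$-ball, so $\bar{\mathbf y}\in\texttt{bnd}~\mathcal{Y}^m$. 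Part d) of Lemma~\ref{lemma:normal-cone-cartesian-product-results} then shows that the choice of Remark~\ref{remark:condition-for-choosing-x-bar} is respected. By part c) of the same lemma, it suffices to verify $\boldsymbol{\tilde\eta}_i\in\texttt{int}~N_{\mathcal{Y}}(\bar{\mathbf y}_i)$ for every $i\in\mathcal{S}$.

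Next I compute $\boldsymbol{\tilde\eta}_i$ from Remark~\ref{remark:eta-user-component}. All $\bar{\mathbf y}_i$ are multiples of the same $\mathbf{e}_j$, so each $\boldsymbol{\tilde\eta}_i$ is supported only on coordinate $j$:
\[
\boldsymbol{\tilde\eta}_i=\tfrac{a}{\beta}\,\sigma_i\,\mathbf{e}_j,
\]
where
\[
\sigma_1=s_1-s_2,\qquad \sigma_m=s_m-s_{m-1},\qquad \sigma_i=2s_i-s_{i-1}-s_{i+1}\ \text{for } 2\le i\le m-1.
\]
By \eqref{eq:interior-normal-cone-l1-ball}, with sign $\pm=s_i$, the membership $\boldsymbol{\tilde\eta}_i\in\texttt{int}~N_{\mathcal{Y}}(as_i\mathbf{e}_j)$ requires $s_i(\tilde\eta_i)_j>|(\tilde\eta_i)_l|$ for all $l\neq j$. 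The off-coordinates vanish, so this reduces to $s_i\sigma_i>0$ (using $a,\beta>0$).

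The final step is a sign check using $s_i\in\{-1,1\}$.
\begin{itemize}
\item For $i=1$: the hypothesis $s_1\neq s_2$ gives $s_2=-s_1$, so $s_1\sigma_1=s_1(2s_1)=2>0$.
\item For $i=m$: the same argument with $s_m\ne s_{m-1}$ gives $s_m\sigma_m=2>0$.
\item For interior $i$: we have $s_i\sigma_i=2-s_is_{i-1}-s_is_{i+1}$, where each product $s_is_{i\pm1}\in\{-1,1\}$. The hypothesis says at least one of them equals $-1$, so $s_i\sigma_i\in\{2,4\}>0$.
\end{itemize}
This establishes feasibility. Lemma~\ref{lemma:feasibility-of-relaxed-problem-implies-feasibility-of-original-problem} then yields the ray $\hat{\mathcal{R}}(\bar{\mathbf y})=\{c\boldsymbol{\tilde\mu}+\alpha\boldsymbol{\tilde\mu}\mid\alpha\ge0\}$ with $\boldsymbol{\tilde\mu}=\mathbf{C}\bar{\mathbf y}/\beta$ for sufficiently large $c$.

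The main obstacle is only bookkeeping: invoking Lemma~\ref{lemma:feasibility-of-relaxed-problem-implies-feasibility-of-original-problem} needs its standing hypotheses (Assumptions~\ref{Assumption:about-h0-and-X}--\ref{Assumption:about-subdifferential-of-h-at-boundary-ponts}) for $f$ and $\mathcal{Y}^m$. These follow from the stated assumptions on $f_i$, the assumed Assumption~\ref{Assumption:about-subdifferential-of-h-at-boundary-ponts}, and compactness and convexity of the $\ell_1$-ball.
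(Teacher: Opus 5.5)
Your proposal is correct and follows essentially the same route as the paper: vertices give the affine-hull condition via part d) of Lemma~\ref{lemma:normal-cone-cartesian-product-results}, Remark~\ref{remark:eta-user-component} gives $\boldsymbol{\tilde\eta}_i=(a/\beta)\sigma_i\mathbf{e}_j$, the sign check $s_i\sigma_i>0$ combined with \eqref{eq:interior-normal-cone-l1-ball} and part c) gives interiority, and Lemma~\ref{lemma:feasibility-of-relaxed-problem-implies-feasibility-of-original-problem} with Remark~\ref{Remark:feasibility-of-relaxed-problem} yields the ray. Your explicit evaluation $s_i\sigma_i\in\{2,4\}$ writes out the inequality that the paper states without computation.
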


\begin{proof}
Since $\bar{\mathbf y}_i=as_i\mathbf e_j$ for all
$i\in\mathcal{S}$, we have
$\bar{\mathbf y}_i\in\texttt{ver}~\mathcal{Y}$ for all
$i\in\mathcal{S}$, \cf~Remark~\ref{remark:normal-cone-l1-ball},
and hence
$\bar{\mathbf y}\in\texttt{ver}~\mathcal{Y}^m\subseteq
\texttt{bnd}~\mathcal{Y}^m$.
Moreover,
$\texttt{aff}(N_{\mathcal{Y}}(\bar{\mathbf y}_i))=\R^n$
for all $i\in\mathcal{S}$. Then, from the fourth result of
Lemma~\ref{lemma:normal-cone-cartesian-product-results}, it follows that
$\texttt{aff}(N_{\mathcal{Y}^m}(\bar{\mathbf y}))=\R^{mn}$.
Hence, $\bar{\mathbf y}$ satisfies the condition in
Remark~\ref{remark:condition-for-choosing-x-bar} with
$\mathcal{X}=\mathcal{Y}^m$. Next, from~\eqref{eq:eta-user-component}, we have $\tilde{\boldsymbol\eta}_i=
\begin{cases}
\displaystyle (a/\beta)(s_1-s_2)\mathbf e_j
&;\quad i=1,\\
\displaystyle (a/\beta)
(2s_i-s_{i-1}-s_{i+1})\mathbf e_j
&;\quad i=2,\ldots,m-1,\\
\displaystyle (a/\beta)(s_m-s_{m-1})\mathbf e_j
&;\quad i=m.
\end{cases}$. Since $s_1\ne s_2$ and $s_m\ne s_{m-1}$, we have
$s_1(s_1-s_2)>0$ and $s_m(s_m-s_{m-1})>0$.
Moreover, for every $i\in\{2,\ldots,m-1\}$,
the condition $s_i\ne s_{i-1}$ or $s_i\ne s_{i+1}$
implies that
$s_i(2s_i-s_{i-1}-s_{i+1})>0$.
Thus, from~\eqref{eq:interior-normal-cone-l1-ball},
it follows that
$\tilde{\boldsymbol\eta}_i\in
\texttt{int}~N_{\mathcal{Y}}(\bar{\mathbf y}_i)$
for all $i\in\mathcal{S}$.
Hence, from the third result of
Lemma~\ref{lemma:normal-cone-cartesian-product-results},
we have
$\tilde{\boldsymbol\eta}\in
\texttt{int}~N_{\mathcal{Y}^m}(\bar{\mathbf y})$.
Therefore, according to
Lemma~\ref{lemma:feasibility-of-relaxed-problem-implies-feasibility-of-original-problem}
and Remark~\ref{Remark:feasibility-of-relaxed-problem},
an RFSGOR ray $\hat{\mathcal{R}}(\bar{\mathbf y})$
associated with problem~\eqref{eq:consensus-problem}
can be computed.
\end{proof}
Next, the following result establishes that an RFSGOR ray associated with
problem~\eqref{eq:consensus-problem} can be computed when $\mathcal{Y}$ is
defined by an $\ell_\infty$-norm constraint,
\cf~Remark~\ref{remark:normal-cone-l-infinity-ball}.
\begin{prop}\label{Prop:RFSGOR-ray-l-infinity-norm-constraint}
Suppose that each $f_i$ in~\eqref{eq:consensus-problem} is convex,
lower semicontinuous, and proper, and that
Assumption~\ref{Assumption:about-subdifferential-of-h-at-boundary-ponts}
holds with $h_0$ and $\mathcal{X}$ replaced by $f$ and
$\mathcal{Y}^m$, respectively, \cf~\eqref{eq:consensus-problem}. Let $\mathcal{Y}$ be defined by an $\ell_\infty$-norm constraint,
\cf~Remark~\ref{remark:normal-cone-l-infinity-ball}.
Let
\begin{equation}\label{eq:consensus-problem-feasibile-y-bar-l-infinity-norm}
\bar{\mathbf{y}}_i=(-1)^{i-1}a\mathbf{1}_n,
\quad i\in\mathcal{S}.
\end{equation}
Then, there exists an RFSGOR ray $\hat{\mathcal{R}}(\mathbf{\bar y})$
(\cf~\eqref{eq:RFSGOR-ray-original}) associated with problem
\eqref{eq:consensus-problem}, where
$\bar{\mathbf{y}}=[\bar{\mathbf{y}}_1\tran\ldots\bar{\mathbf{y}}_m\tran]\tran$.
\end{prop}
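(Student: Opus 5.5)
The proof follows the template of Proposition~\ref{Prop:RFSGOR-ray-l-1-norm-constraint}: show that $\bar{\mathbf y}$ is a vertex of $\mathcal{Y}^m$ whose normal cone has full affine hull, then verify componentwise that $\tilde{\boldsymbol\eta}=\mathbf{C}\tran\mathbf{C}\bar{\mathbf y}/\beta$ lies in the interior of the normal cone.

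\emph{Step 1 (vertex and affine-hull condition).} Each $\bar{\mathbf y}_i=(-1)^{i-1}a\mathbf 1_n$ has the form $a[t_1\ldots t_n]\tran$ with $t_k=(-1)^{i-1}\in\{-1,1\}$ for all $k$. By Remark~\ref{remark:normal-cone-l-infinity-ball}, $\bar{\mathbf y}_i\in\texttt{ver}~\mathcal{Y}$, so $\bar{\mathbf y}\in\texttt{ver}~\mathcal{Y}^m\subseteq\texttt{bnd}~\mathcal{Y}^m$. The normal cone $N_{\mathcal{Y}}(\bar{\mathbf y}_i)$ is a closed orthant, which has nonempty interior; hence $\texttt{aff}(N_{\mathcal{Y}}(\bar{\mathbf y}_i))=\R^n$. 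The fourth result of Lemma~\ref{lemma:normal-cone-cartesian-product-results} then gives $\texttt{aff}(N_{\mathcal{Y}^m}(\bar{\mathbf y}))=\R^{mn}$, so $\bar{\mathbf y}$ meets Remark~\ref{remark:condition-for-choosing-x-bar} with $\mathcal{X}=\mathcal{Y}^m$.

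\emph{Step 2 (computing $\tilde{\boldsymbol\eta}$).} Substitute into \eqref{eq:eta-user-component}, writing $\sigma_i=(-1)^{i-1}$.
For $i=1$: $\tilde{\boldsymbol\eta}_1=(a/\beta)(1-(-1))\mathbf 1_n=(2a/\beta)\mathbf 1_n$.
For $2\le i\le m-1$: $\tilde{\boldsymbol\eta}_i=(a/\beta)\bigl(2\sigma_i-\sigma_{i-1}-\sigma_{i+1}\bigr)\mathbf 1_n=(4a/\beta)\sigma_i\mathbf 1_n$, since $\sigma_{i\pm1}=-\sigma_i$.
For $i=m$: $\tilde{\boldsymbol\eta}_m=(a/\beta)(\sigma_m-\sigma_{m-1})\mathbf 1_n=(2a/\beta)\sigma_m\mathbf 1_n$.
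This assumes $m\ge2$; if $m=2$ there are no middle indices and only the two endpoint formulas apply.

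\emph{Step 3 (interior membership).} In every case, each coordinate of $\tilde{\boldsymbol\eta}_i$ equals $\sigma_i$ times a positive constant. Therefore $t_k(\tilde{\boldsymbol\eta}_i)_k>0$ for all $k$, with $t_k=\sigma_i$. By \eqref{eq:interior-normal-cone-l-infinity-ball}, $\tilde{\boldsymbol\eta}_i\in\texttt{int}~N_{\mathcal{Y}}(\bar{\mathbf y}_i)$. The third result of Lemma~\ref{lemma:normal-cone-cartesian-product-results} gives $\tilde{\boldsymbol\eta}\in\texttt{int}~N_{\mathcal{Y}^m}(\bar{\mathbf y})$. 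By Remark~\ref{Remark:feasibility-of-relaxed-problem}, problem~\eqref{eq:RFSGOR-feasibility-problem-relaxed-associated-with-consensus-problem} is then feasible. Lemma~\ref{lemma:feasibility-of-relaxed-problem-implies-feasibility-of-original-problem} then yields the RFSGOR ray $\hat{\mathcal R}(\bar{\mathbf y})$.

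The only step needing care is the sign computation for the middle components, where the alternating pattern makes both neighbors have the opposite sign. This is simpler than the $\ell_1$ case, so I do not expect a genuine obstacle.
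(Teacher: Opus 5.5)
Your proposal is correct and follows the paper's proof essentially step for step. It checks that $\bar{\mathbf y}$ is a vertex satisfying Remark~\ref{remark:condition-for-choosing-x-bar}, obtains from \eqref{eq:eta-user-component} exactly the values in \eqref{eq:l-infinity-norm-eta}, verifies membership in the open orthant \eqref{eq:interior-normal-cone-l-infinity-ball}, and concludes via Lemma~\ref{lemma:normal-cone-cartesian-product-results}, Remark~\ref{Remark:feasibility-of-relaxed-problem}, and Lemma~\ref{lemma:feasibility-of-relaxed-problem-implies-feasibility-of-original-problem}; your explicit sign argument ($\sigma_{i\pm1}=-\sigma_i$) and the note on $m=2$ just spell out what the paper leaves implicit.
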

\begin{proof}
Clearly, $\mathbf{\bar y}_i\in\texttt{ver}~\mathcal{Y}$ for all
$i\in\mathcal{S}$, \cf~Remark~\ref{remark:normal-cone-l-infinity-ball}.
Then, similarly to the proof of
Proposition~\ref{Prop:RFSGOR-ray-l-1-norm-constraint},
$\mathbf{\bar y}$ satisfies the condition in
Remark~\ref{remark:condition-for-choosing-x-bar} with
$\mathcal{X}=\mathcal{Y}^m$. From~\eqref{eq:eta-user-component}, the solution
$\boldsymbol{\tilde\eta}
=[\boldsymbol{\tilde\eta}_1\tran\ldots\boldsymbol{\tilde\eta}_m\tran]\tran$
obtained by solving the equality constraints in
\eqref{eq:RFSGOR-feasibility-problem-relaxed-associated-with-consensus-problem}
is given by
\begin{equation}\label{eq:l-infinity-norm-eta}
\boldsymbol{\tilde\eta}_i=
\begin{cases}
\displaystyle (1/\beta)2a\mathbf{1}_n
&\!\!;\quad i=1\\
\displaystyle (1/\beta)(-1)^{i-1}4a\mathbf{1}_n
&\!\!;\quad i=2,\ldots,m-1\\
\displaystyle (1/\beta)(-1)^{m-1}2a\mathbf{1}_n
&\!\!;\quad i=m.
\end{cases}
\end{equation}
Then, from~\eqref{eq:interior-normal-cone-l-infinity-ball},
$\boldsymbol{\tilde\eta}_i\in
\texttt{int}~N_{\mathcal{Y}}(\mathbf{\bar y}_i)$
for all $i\in\mathcal{S}$.
The remainder of the proof follows the same arguments as the proof of
Proposition~\ref{Prop:RFSGOR-ray-l-1-norm-constraint}.
\end{proof}

\section{Communication Efficiency of SSAR}
\label{Sec:Communication-Efficiency-of-AdFSGOR}
In this section, we discuss the benefits of the SSAR method from a communication perspective. To this end, we first consider the total number of bits transmitted over the iterations of Algorithm~\ref{Alg:AdFSGOR-assisted_dual-decomposition-method}, accounting for both agent-to-CS and CS-to-agent communication. We refer to communication from the agents to the CS as uplink communication and communication from the CS to the agents as downlink communication. In uplink communication, the agents transmit $1$ bit (\cf~steps~$4$ and $10$), the vertex indices corresponding to $\mathbf{y}_i^{(k)}$ (\cf~step~$17$), or the numerical values of $\mathbf{y}_i^{(k)}$ (\cf~steps~$13$ and $28$). In downlink communication, the CS broadcasts $1$ bit (\cf~steps~$5$ and $11$), the vertex indices corresponding to $\mathbf{y}_i^{(k)}$ (\cf~step~$18$), or the numerical values of $\boldsymbol{\lambda}^{(k+1)}$ (\cf~steps~$15$ and $28$).

Since $\mathcal{Y}$ is bounded, each component of $\mathbf{y}_i^{(k)}$ lies in a bounded interval. Therefore, we assume that each component of $\mathbf{y}_i^{(k)}$ is represented using a $b_y$-bit uniform quantizer (see Figure~\ref{Fig:quantization-scheme} for an illustration). Moreover, since $\mathcal{Y}$ has a finite number of vertices, assuming that these vertices are indexed and known to both the agents and the CS, transmitting the index of a vertex requires $b_{\mathrm{v}}=\left\lceil\log_2|\texttt{ver}~\mathcal{Y}|\right\rceil$ bits, where $|\texttt{ver}~\mathcal{Y}|$ denotes the total number of vertices of $\mathcal{Y}$ and, for any $x\in\R$, $\lceil x\rceil$ denotes the smallest integer $n_0$ such that $n_0\geq x$. Note that the dual problem~\eqref{eq:dual-problem-of-consensus-problem} is unconstrained, and hence the dual variable is not restricted to a bounded set. Therefore, instead of using a uniform quantizer, we assume that each component of $\boldsymbol{\lambda}^{(k+1)}$ is communicated using a $b_\lambda$-bit floating-point representation.

\begin{figure}[t]
\centering
\begin{tikzpicture}[scale=1.0]

\draw[]  (-4,0) -- (4.4,0);

\node at (-4,0.4) {$-a$};
\node at (4.4,0.4) {$a$};

\foreach \x/\i in {-4/0, -2.8/1, -1.6/2, -0.4/3, 0.8/4, 2/5, 3.2/6, 4.4/7} {
    \draw (\x,0.1) -- (\x,-0.1);
    \node at (\x,-0.5) {\small $\i$};
}

\filldraw[red] (0.4,0) circle (2pt);
\node[above] at (0.3,0.01) {$y_{ij}^{(k)}$};

\filldraw[blue] (0.8,0) circle (2pt);
\node[above] at (1.2,0.01) {$y_{ij}^{q(k)}$};

\draw[<->] (-2.8,0.2) -- (-1.6,0.2);
\node at (-2.2,0.5) {$t=2a/7$};

\end{tikzpicture}
\caption{Illustration of uniform $b_y$-bit quantization over an interval $[-a,a]$ with $b_y=3$, where $a>0$. The corresponding number of quantization levels is $L=2^3=8$. Here, $y_{ij}^{(k)}$ represents the $j$th component of $\mathbf{y}_i^{(k)}$, and $y_{ij}^{q(k)}$ represents the corresponding quantized value.}
\label{Fig:quantization-scheme}
\end{figure}

\begin{figure}[!t]
\centering

\begin{tikzpicture}[x=0.13cm,y=0.13cm]
\node[left, font=\footnotesize] at (0,22) {Alg.~1};

\draw[fill=green!30]
    (0,20) rectangle (50,24);
\draw[->, thick] (0,20) -- (52,20)
    node[below, font=\scriptsize] {$k$};
\draw[->, thick] (0,20) -- (0,26)
    node[above, font=\scriptsize] {$b$};

 \node[above, font=\scriptsize] at (25,24) {65600};
x-axis markings
\foreach \x in {50}
{
    \draw[thick] (\x,20) -- (\x,19.5);
}

 \node[below, font=\scriptsize] at (49,20) {50};
\node[left, font=\footnotesize] at (-1,12) {\cite{abeynanda2026characteristics}};

\draw[fill=blue!30]
    (0,10) rectangle (20,11);

\draw[fill=blue!30]
    (20,10) rectangle (21,15);

\draw[fill=green!30]
    (21,10) rectangle (50,14);

\node[above, font=\scriptsize] at (10,11) {11};
\node[above, font=\scriptsize] at (35,14) {65600};
\node[above, font=\scriptsize] at (20.5,15) {65611};

\draw[->, thick] (0,10) -- (52,10)
    node[below, font=\scriptsize] {$k$};
\draw[->, thick] (0,10) -- (0,16)
    node[above, font=\scriptsize] {$b$};

x-axis markings
\foreach \x in {20,50}
{
    \draw[thick] (\x,10) -- (\x,9.5);
}

\node[below, font=\scriptsize] at (20,10) {20};
\node[below, font=\scriptsize] at (49,10) {50};
\node[left, align=right, font=\footnotesize] at (0,2)
    {Alg.~3};

\draw[fill=red!30]
    (0,0) rectangle (20,1);

\draw[fill=red!30]
    (20,0) rectangle (21,3);

\draw[fill=red!30]
    (21,0) rectangle (30,1);

\draw[fill=red!30]
    (30,0) rectangle (31,3);

\draw[fill=red!30]
    (31,0) rectangle (35,1);

\draw[fill=red!30]
    (35,0) rectangle (36,6);

\draw[fill=green!30]
    (36,0) rectangle (50,4);

\node[above, font=\scriptsize] at (10,1) {11};
\node[above, font=\scriptsize] at (20.5,3) {182};
\node[above, font=\scriptsize] at (25.5,1) {11};
\node[above, font=\scriptsize] at (30.5,3) {182};
\node[above, font=\scriptsize] at (33,1) {11};
\node[above, font=\scriptsize] at (35.5,6) {65622};
\node[above, font=\scriptsize] at (43,4) {65600};

\draw[->, thick] (0,0) -- (52,0)
    node[below, font=\scriptsize] {$k$};
\draw[->, thick] (0,0) -- (0,5.8)
    node[above, font=\scriptsize] {$b$};

\foreach \x in {20,30,35,50}
{
    \draw[thick] (\x,0) -- (\x,-0.5);
}

\node[below, font=\scriptsize] at (0,0) {0};
\node[below, font=\scriptsize] at (20,0) {20};
\node[below, font=\scriptsize] at (30,0) {30};
\node[below, font=\scriptsize] at (35,0) {35};
\node[below, font=\scriptsize] at (49,0) {50};

\end{tikzpicture}
\caption{Communication efficiency of SSAR: communication cost
at each iteration of Alg.~\ref{Alg:AdFSGOR-assisted_dual-decomposition-method}, \cite[Alg. 3]{abeynanda2026characteristics}, and Alg.~\ref{Alg:Dual-Decomposition-Algorithm}. Numbers above the rectangles indicate their heights.}
\label{Fig:comparison_communication_efficiency}
\end{figure}

Next, we quantify the total communication cost of Algorithm~\ref{Alg:AdFSGOR-assisted_dual-decomposition-method}. An important feature of Algorithm~\ref{Alg:AdFSGOR-assisted_dual-decomposition-method} is that, throughout the SSAR exploitation phase, the agents and the CS do not need to communicate the numerical values of the local solutions $\mathbf{y}_i^{(k)}$ or the dual variable updates $\boldsymbol{\lambda}^{(k+1)}$. Instead, only binary indicators and, at iterations at which ray transitions occur, the vertex indices corresponding to $\mathbf{y}_i^{(k)}$ need to be communicated. In particular, at iterations without ray transitions, each agent only needs to transmit $1$ bit to the CS, and the CS only needs to broadcast $1$ bit to the agents, \cf~steps~$3$--$8$. At iterations at which ray transitions occur, each agent needs to transmit $2+b_{\mathrm{v}}$ bits to the CS (\cf~steps~$4$, $10$, and $17$), and the CS needs to broadcast $2+mb_{\mathrm{v}}$ bits to the agents (\cf~steps~$5$, $11$, and $18$).

Let $k_0$ denote the iteration at which the SSAR exploitation phase terminates. The SSAR exploitation phase can terminate at step~$12$ or step~$23$ of Algorithm~\ref{Alg:AdFSGOR-assisted_dual-decomposition-method}. In the first case, when $k=k_0$, each agent transmits $2+nb_y$ bits to the CS (\cf~steps~$4$, $10$, and $13$), and the CS broadcasts $2+n(m-1)b_\lambda$ bits to the agents (\cf~steps~$5$, $11$, and $15$). In the latter case, when $k=k_0$, each agent transmits $2+b_{\mathrm v}$ bits to the CS (\cf~steps~$4$, $10$, and $17$), and the CS broadcasts $2+mb_{\mathrm v}$ bits to the agents (\cf~steps~$5$, $11$, and $18$). After switching to Algorithm~\ref{Alg:Dual-Decomposition-Algorithm}, each agent transmits $nb_y$ bits to the CS, and the CS broadcasts $n(m-1)b_\lambda$ bits to the agents at each iteration (\cf~steps~$4$ and $6$ of Algorithm~\ref{Alg:Dual-Decomposition-Algorithm}). Based on the communication structure discussed above, the total number of uplink communication bits in Algorithm~\ref{Alg:AdFSGOR-assisted_dual-decomposition-method} at iteration $k$ is given by

\begin{equation}\label{eq:bits-uplink-SSAR}
B_{\texttt{SSAR,U}}^{(k)}
{=}
\begin{cases}
m & \!\! \!\!\! ; \ k<k_{0} \, \text{\& no ray transition occurs}
\\
(2{+}b_v)m &\!\! \!\!\!; \ k<k_{0} \,\text{\& a ray transition occurs}
\\
(2{+}nb_y)m &\!\! \!\!\! ; \ k=k_{0}\,\text{\& step~$12$ of Alg.~\ref{Alg:AdFSGOR-assisted_dual-decomposition-method} occurs}
\\
(2{+}b_v)m &\!\! \!\!\!; \ 
k=k_{0}\,\text{\& step~$22$ of Alg.~\ref{Alg:AdFSGOR-assisted_dual-decomposition-method} occurs}
\\
mnb_y &\!\! \!\!\! ; \ k>k_{0}.
\end{cases}
\end{equation}
The total number of downlink communication bits in Algorithm~\ref{Alg:AdFSGOR-assisted_dual-decomposition-method} at iteration $k$ is given by
\begin{equation}\label{eq:bits-downlink-SSAR}
B_{\texttt{SSAR,D}}^{(k)}
{=}
\begin{cases}
1 \ ; \ k<k_{0} \, \text{\& no ray transition occurs}
\\
2+mb_v \ ; \ k<k_{0} \,\text{\& a ray transition occurs}
\\
2{+}n(m{-}1)b_\lambda  ; \, k{=}k_{0}\,\text{\& step $12$ of Alg.~\ref{Alg:AdFSGOR-assisted_dual-decomposition-method} occurs}
\\
2+mb_v \ ; \ k=k_{0}\,\text{\& step~$22$ of Alg.~\ref{Alg:AdFSGOR-assisted_dual-decomposition-method} occurs}\\
n(m-1)b_\lambda \ ; \ k>k_{0}.
\end{cases}
\end{equation}
Then, the total number of bits communicated over the network in Algorithm~\ref{Alg:AdFSGOR-assisted_dual-decomposition-method} at iteration $k$ is given by
\begin{equation}\label{eq:total-bits-SSAR}
B_{\texttt{SSAR}}^{(k)}
=
B_{\texttt{SSAR,U}}^{(k)}
+
B_{\texttt{SSAR,D}}^{(k)}.
\end{equation}

Next, to compare the communication cost of the SSAR algorithm (Algorithm~\ref{Alg:AdFSGOR-assisted_dual-decomposition-method}) with Algorithm~\ref{Alg:Dual-Decomposition-Algorithm} and the FSGOR method~\cite[Alg. 3]{abeynanda2026characteristics}, we first quantify the total communication cost of Algorithm~\ref{Alg:Dual-Decomposition-Algorithm} and \cite[Alg. 3]{abeynanda2026characteristics}. Note that Algorithm~\ref{Alg:Dual-Decomposition-Algorithm} requires each agent to transmit $\mathbf{y}_i^{(k)}$ to the CS (\cf~step~$4$), and the CS to broadcast $\boldsymbol{\lambda}^{(k+1)}$ to the agents (\cf~step~$6$) at each iteration $k$. Thus, Algorithm~\ref{Alg:Dual-Decomposition-Algorithm} requires $B_{\texttt{Alg}1}^{(k)}=mnb_y+n(m-1)b_{\lambda}$ bits to be communicated over the network at each iteration $k$. Next, we determine the total communication cost of the FSGOR method~\cite[Alg. 3]{abeynanda2026characteristics}. The FSGOR method identifies only an initial RFSGOR ray $\mathcal{R}_1$. Suppose that it detects that $\boldsymbol{\lambda}^{(k)}$ exits its initial RFSGOR ray at iteration $k=t_0$. Then, the total number of bits communicated in \cite[Alg. 3]{abeynanda2026characteristics} at iteration $k$ is given by
\begin{equation}\label{eq:total-bits-FSGOR}
B_{\texttt{FSGOR}}^{(k)}
=
\begin{cases}
m+1 &\!\!\!\!;\quad k<t_0,
\\
m+1+mnb_y+n(m-1)b_{\lambda} &\!\!\!\!;\quad k=t_0,
\\
mnb_y+n(m-1)b_{\lambda} &\!\!\!\!;\quad k>t_0.
\end{cases}
\end{equation}
Next, we illustrate the communication efficiency of Algorithm~\ref{Alg:AdFSGOR-assisted_dual-decomposition-method} compared with Algorithm~\ref{Alg:Dual-Decomposition-Algorithm} and the FSGOR method~\cite[Alg. 3]{abeynanda2026characteristics} using the following example.

\begin{example}[Communication Efficiency of Algorithm~\ref{Alg:AdFSGOR-assisted_dual-decomposition-method}]
Consider problem~\eqref{eq:consensus-problem} with $m=10$, $n=100$, and $\mathcal{Y}$ defined by an $l_1$-ball (\cf~Remark~\ref{remark:normal-cone-l1-ball}). Let $b_y=8$ and $b_\lambda=64$. Since $\mathcal{Y}$ is an $l_1$-ball, it has $2n=200$ vertices, and hence $b_{\mathrm{v}}=\lceil\log_2(200)\rceil=8$. Suppose that Algorithm~\ref{Alg:AdFSGOR-assisted_dual-decomposition-method} identifies three RFSGOR rays, $\mathcal{R}_1$, $\mathcal{R}_2$, and $\mathcal{R}_3$, where $\boldsymbol{\lambda}^{(k)}\in\mathcal{R}_1$ for $20$ iterations, $\boldsymbol{\lambda}^{(k)}\in\mathcal{R}_2$ for $10$ iterations, and $\boldsymbol{\lambda}^{(k)}\in\mathcal{R}_3$ for $5$ iterations. Furthermore, suppose that the SSAR exploitation phase terminates due to step~$12$ of Algorithm~\ref{Alg:AdFSGOR-assisted_dual-decomposition-method}. For comparison, suppose that we initialize both Algorithm~\ref{Alg:AdFSGOR-assisted_dual-decomposition-method} and the FSGOR method~\cite[Alg. 3]{abeynanda2026characteristics} from the same point on the same initial RFSGOR ray $\mathcal{R}_1$ and use the same constant stepsize while $\boldsymbol{\lambda}^{(k)}$ lies on $\mathcal{R}_1$. Therefore, the FSGOR method also remains on $\mathcal{R}_1$ for $20$ iterations. Let $T_{\texttt{SSAR}}^{50}$, $T_{\texttt{FSGOR}}^{50}$, and $T_{\texttt{Alg1}}^{50}$ be the total numbers of bits communicated over the network over $50$ iterations for Algorithm~\ref{Alg:AdFSGOR-assisted_dual-decomposition-method}, the FSGOR method~\cite[Alg. 3]{abeynanda2026characteristics}, and Algorithm~\ref{Alg:Dual-Decomposition-Algorithm}, respectively. Then, we have $T_{\texttt{SSAR}}^{50}=\sum_{k=1}^{50}(B_{\texttt{SSAR,U}}^{(k)}+B_{\texttt{SSAR,D}}^{(k)})=984749$, \cf~\eqref{eq:bits-uplink-SSAR}, \eqref{eq:bits-downlink-SSAR}, and \eqref{eq:total-bits-SSAR}. Next, from \eqref{eq:total-bits-FSGOR}, we have $T_{\texttt{FSGOR}}^{50}=\sum_{k=1}^{50}B_{\texttt{FSGOR}}^{(k)}=1968231$. Finally, $T_{\texttt{Alg1}}^{50}=50(mnb_y+n(m-1)b_{\lambda})=3280000$. Thus, Algorithm~\ref{Alg:AdFSGOR-assisted_dual-decomposition-method} can save $983482$ bits and $2295251$ bits compared with the FSGOR method~\cite[Alg. 3]{abeynanda2026characteristics} and Algorithm~\ref{Alg:Dual-Decomposition-Algorithm}, respectively. An illustration is given in Fig.~\ref{Fig:comparison_communication_efficiency}.
\end{example}

\section{Numerical Results}\label{sec:numerical-results}
In this section, we numerically compare the performance of the proposed Algorithm~\ref{Alg:AdFSGOR-assisted_dual-decomposition-method} with Algorithm~\ref{Alg:Dual-Decomposition-Algorithm} and the FSGOR method~\cite[Alg. 3]{abeynanda2026characteristics}. Algorithm~\ref{Alg:Dual-Decomposition-Algorithm} is considered with the following stepsize rules: the constant stepsize rule, the polynomially decaying stepsize rule $\gamma_0/k$, where $\gamma_0>0$ is chosen appropriately, the step-decay stepsize rule~\cite{wang_2021}, and the stepsize rule~\cite{Yura_2020}. The numerical comparisons are performed in terms of both convergence behavior and communication efficiency. The results are presented under two problem settings: 1) least squares regression (LSR) and 2) logistic regression, both with an $\ell_1$-norm regularizer. First, note that both problem settings are based on the following regularized problem formulation:
\begin{equation}\label{eq:regularized-problem}
    \begin{array}{ll}
    \mbox{minimize} & f(\mathbf{z})=(1/m)\sum_{i{=}1}^{m}f_i(\mathbf{z})
    +u\|\mathbf{z}\|_1,
    \end{array}
\end{equation}
where $f_i : \R^n \to \R$ is a convex local objective
function associated with agent $i$, $i\in\mathcal{S}$, and $u$ is the
regularization parameter. Then, the regularized problem
\eqref{eq:regularized-problem} can be equivalently reformulated as the
following regularization-constrained problem (\cf~\cite{bach_2012}):
\begin{equation}\label{eq:regularization-constrained-problem}
    \begin{array}{ll}
    \mbox{minimize} &\tilde f(\mathbf{z})=(1/m)\sum_{i{=}1}^{m}f_i(\mathbf{z})\\
    \mbox{subject to} & \|\mathbf{z}\|_1\leq \tilde u,
    \end{array}
\end{equation}
where $\tilde u>0$ is the corresponding regularization-constrained
bound for a given regularization parameter $u$ in \eqref{eq:regularized-problem}. Then, we consider the following
equivalent consensus formulation of problem
\eqref{eq:regularization-constrained-problem}: 
\begin{equation} \label{eq:regularization-constrained-consensus-form}
\begin{array}{ll}
\mbox{minimize} & f(\mathbf{y})=(1/m)\sum_{i{=}1}^{m}f_i(\mathbf{y}_{i}) \\
\mbox{subject to} & \|\mathbf{y}_i\|_1\leq \tilde u,\ i\in \mathcal{S}\\
& \mathbf{C}\mathbf{y}=\mathbf{0}_{n(m-1)},
\end{array}
\end{equation}
where $\mathbf{y}=[\mathbf{y}_1\tran \ \ldots \ \mathbf{y}_m\tran]\tran$ and
$\mathbf{C}$ has the form~\eqref{eq:consenses-problem-matrix-C}.

\subsection{Least Squares Regression with Regularization}\label{subsec:numerical-results-least-squares-Regression}
We consider problem~\eqref{eq:regularization-constrained-consensus-form}
as a LSR problem, where
$f_i(\mathbf{y}_i)=(1/N_i)\sum_{j=1}^{N_i}
(\mathbf{y}_i\tran\mathbf{x}_{ij}-t_{ij})^2$.
Here, $\mathcal{D}_i=\{(\mathbf{x}_{ij},t_{ij})\}_{j=1}^{N_i}$ denotes
the training dataset of agent $i$, where $\mathbf{x}_{ij}\in\R^n$ is the
$j$th feature vector, whose first entry is $1$ for the bias term, and
$t_{ij}\in\R$ is the corresponding label associated with agent $i$.
The experiments are conducted using the Year Prediction Million Song
Dataset (MSD)~\cite{year_prediction_msd}, which contains $515345$
data examples with $\mathbf{x}_{ij}\in\R^{91}$. The first $463715$
data examples are considered for training. This training dataset is
standardized and shuffled before running the experiments. We consider
$m=100$ agents, with the first $99$ agents having $4637$ training data
examples each, i.e., $N_i=4637$ for $i=1,\ldots,99$, and the $100$th
agent having $N_{100}=4652$ training data examples.

\begin{figure}[!t]
\centering
\subfloat[]{\includegraphics[width=0.5\linewidth]%
{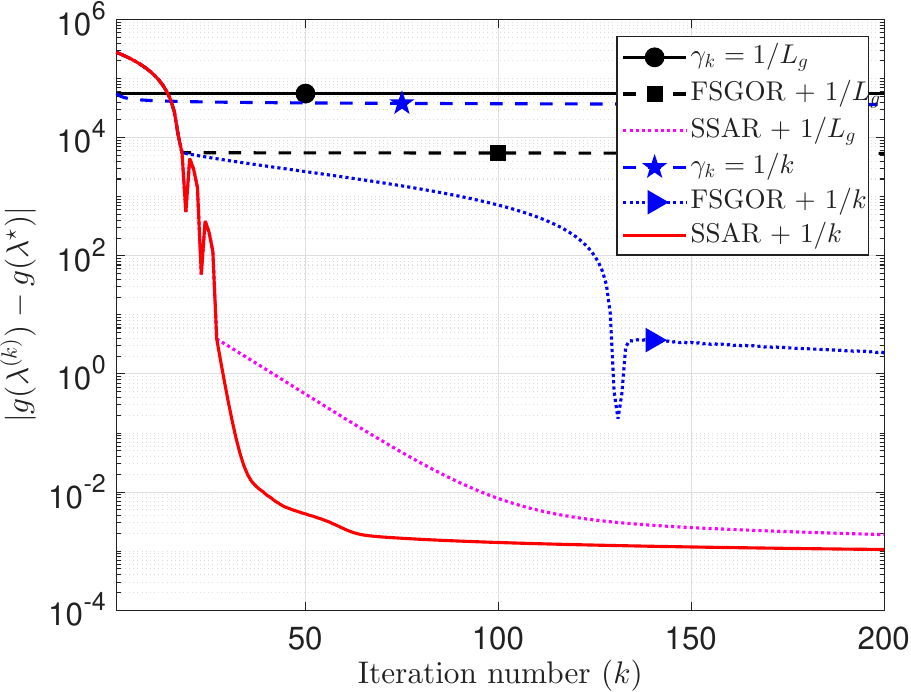}%
\label{subfig:comparison-LSR-1}}
\hfil
\subfloat[]{\includegraphics[width=0.5\linewidth]%
{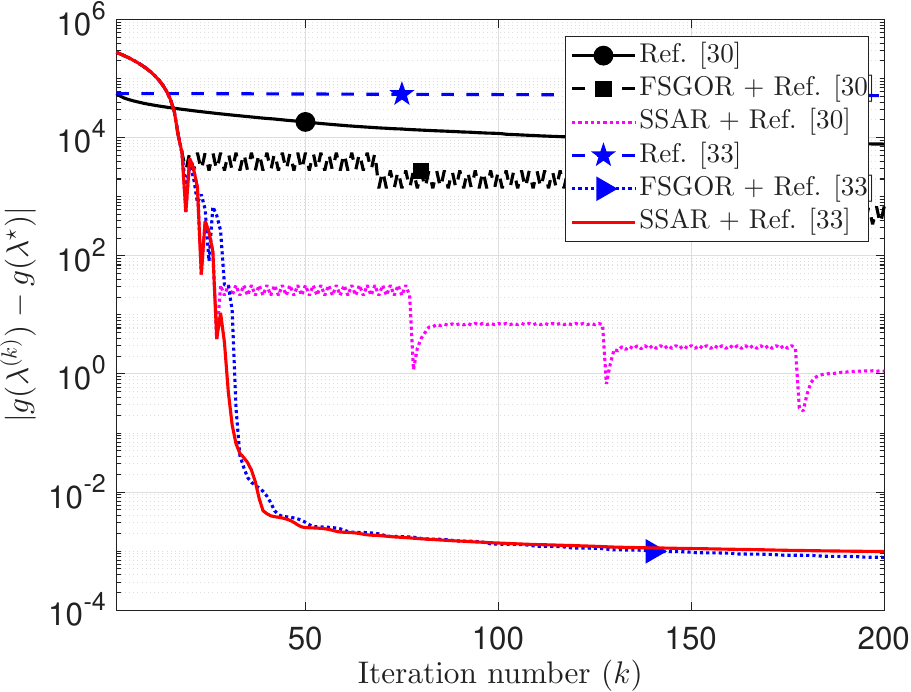}%
\label{subfig:comparison-LSR-2}}
\hfil
\caption{LSR with MSD: comparison with Alg.~\ref{Alg:Dual-Decomposition-Algorithm} and FSGOR~\cite[Alg. 3]{abeynanda2026characteristics}. \protect\subref{subfig:comparison-LSR-1} Alg.~\ref{Alg:Dual-Decomposition-Algorithm} used traditional stepsizes. 
\protect\subref{subfig:comparison-LSR-2} Alg.~\ref{Alg:Dual-Decomposition-Algorithm} used step decay stepsize~\cite{wang_2021} and stepsize \cite{Yura_2020}.}  
\label{Fig:comparison-LSR}
\end{figure}

\begin{figure}[!t]
\centering
\subfloat[]{\includegraphics[width=0.5\linewidth]%
{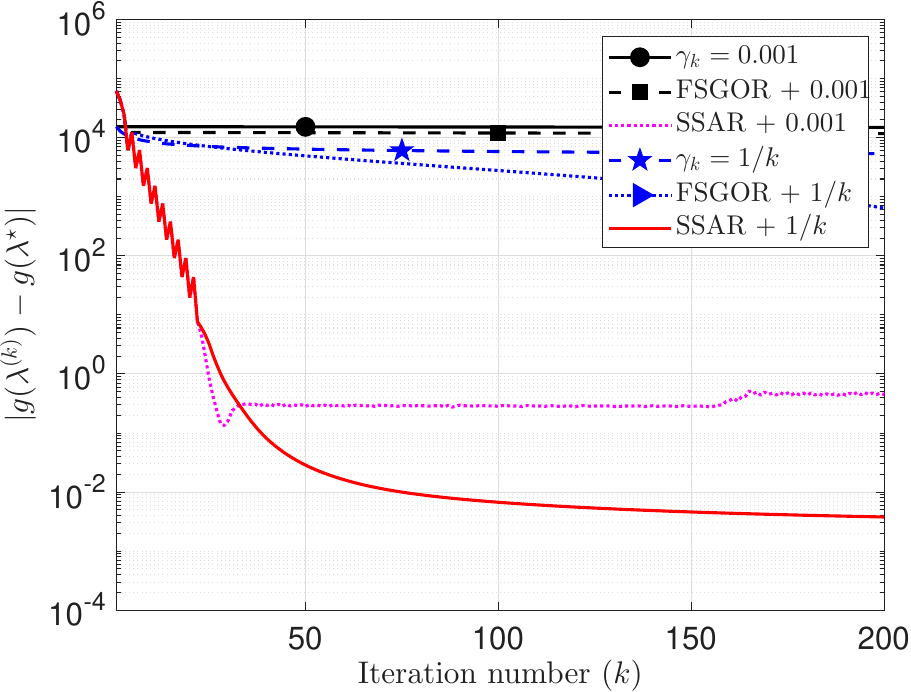}%
\label{subfig:comparison-logistic-regresssion-1}}
\hfil
\subfloat[]{\includegraphics[width=0.5\linewidth]%
{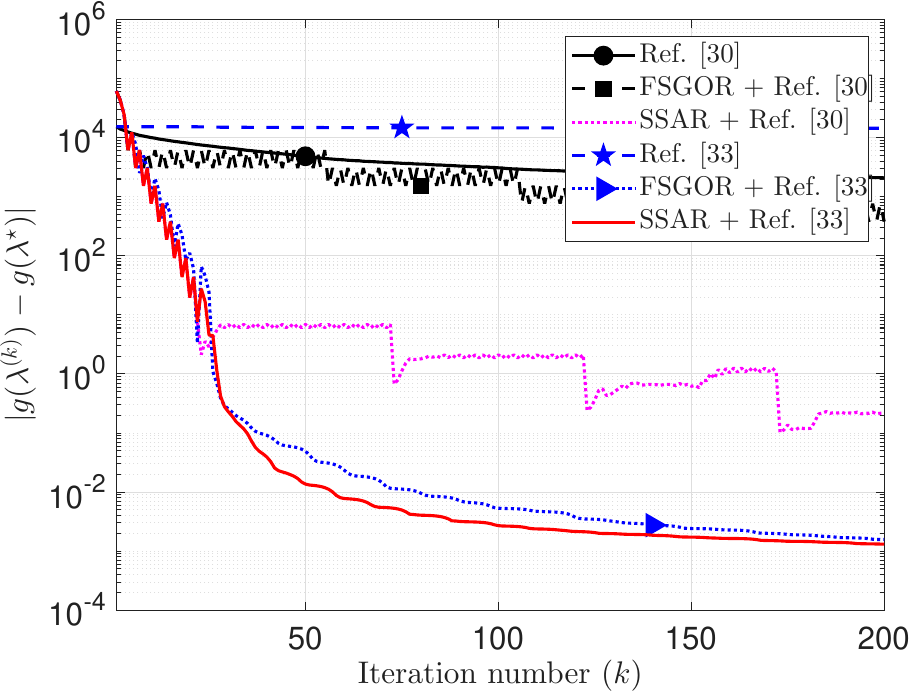}%
\label{subfig:comparison-logistic-regresssion-2}}
\hfil
\caption{Logistic regression with Spambase dataset: comparison with Alg.~\ref{Alg:Dual-Decomposition-Algorithm} and FSGOR~\cite[Alg. 3]{abeynanda2026characteristics}. \protect\subref{subfig:comparison-logistic-regresssion-1} Alg.~\ref{Alg:Dual-Decomposition-Algorithm} used traditional stepsizes. 
\protect\subref{subfig:comparison-logistic-regresssion-2} Alg.~\ref{Alg:Dual-Decomposition-Algorithm} used step decay stepsize~\cite{wang_2021} and stepsize~\cite{Yura_2020}.}  
\label{Fig:comparison-logistic-regresssion}
\end{figure}

Fig.~\ref{Fig:comparison-LSR} depicts the comparison of the results, where
the parameters of the SSAR algorithm
(Algorithm~\ref{Alg:AdFSGOR-assisted_dual-decomposition-method})
are set to $(\beta,c,\gamma,q,N_{\max})=(5,500,6,0.3,8)$. The FSGOR method
~\cite[Alg. 3]{abeynanda2026characteristics} is considered with the same
values $(\beta,c,\gamma)=(5,500,6)$ as those used for
Algorithm~\ref{Alg:AdFSGOR-assisted_dual-decomposition-method}.
The constant stepsize rule used in
Algorithm~\ref{Alg:Dual-Decomposition-Algorithm} is set to
$\gamma_k=1/L_g$, where $L_g$ is the gradient Lipschitz constant of the dual function associated with the considered LSR problem. Fig.~\ref{Fig:comparison-LSR}
\subref{subfig:comparison-LSR-1} shows that
Algorithm~\ref{Alg:AdFSGOR-assisted_dual-decomposition-method}
converges much faster than both
Algorithm~\ref{Alg:Dual-Decomposition-Algorithm} and \cite[Alg. 3]{abeynanda2026characteristics}. Fig.~\ref{Fig:comparison-LSR}
\subref{subfig:comparison-LSR-2} demonstrates that
Algorithm~\ref{Alg:AdFSGOR-assisted_dual-decomposition-method}
achieves significantly faster convergence than
Algorithm~\ref{Alg:Dual-Decomposition-Algorithm}. Compared with the
FSGOR method~\cite[Alg. 3]{abeynanda2026characteristics},
Algorithm~\ref{Alg:AdFSGOR-assisted_dual-decomposition-method}
exhibits faster convergence when the switching stepsize rule
~\cite{wang_2021} is used, while achieving comparable convergence when
the switching stepsize rule~\cite{Yura_2020} is used.

\subsection{Logistic Regression with Regularization}\label{subsec:neumerical-results-logistic-regression}
We consider problem~\eqref{eq:regularization-constrained-consensus-form} as a logistic regression problem, where
$f_i(\mathbf{y}_i)=(1/N_i)\sum_{j=1}^{N_i}
[\log(1+e^{\mathbf{y}_i\tran\mathbf{x}_{ij}})
-t_{ij}\mathbf{y}_i\tran\mathbf{x}_{ij}]$.
Here, $\mathcal{D}_i={(\mathbf{x}_{ij},t_{ij})}_{j=1}^{N_i}$ is the training dataset of agent $i$, where $\mathbf{x}_{ij}\in\R^n$ is the $j$th feature vector containing $1$ as the first entry for the bias term, and $t_{ij}\in\{0,1\}$ is the corresponding binary outcome associated with agent $i$. The experiments are conducted using the Spambase dataset~\cite{spambase_94}. The Spambase dataset contains $4601$ data examples with $\mathbf{x}_{ij}\in\R^{58}$. A total of $3220$ data examples are randomly selected for training and divided among $m=10$ agents, with $322$ training data examples being assigned to each agent, i.e., $N_i=322$ for all $i\in\mathcal{S}$. The feature matrix is standardized before the experiments are conducted.

Fig.~\ref{Fig:comparison-logistic-regresssion} shows the corresponding convergence results. The parameters of the SSAR algorithm (\cf~Algorithm~\ref{Alg:AdFSGOR-assisted_dual-decomposition-method}) are chosen as $(\beta,c,\gamma,q,N_{\max})=(5,500,6,0.5,13)$. For the FSGOR method~\cite[Alg. 3]{abeynanda2026characteristics}, the same values $(\beta,c,\gamma)=(5,500,6)$ are considered. The results demonstrate a similar behavior to that observed in Fig.~\ref{Fig:comparison-LSR}.

\subsection{Communication Efficiency of SSAR}
We evaluate the communication efficiency of SSAR
(\cf~Algorithm~\ref{Alg:AdFSGOR-assisted_dual-decomposition-method})
for the LSR and logistic regression problems
considered in \S~\ref{subsec:numerical-results-least-squares-Regression}
and \S~\ref{subsec:neumerical-results-logistic-regression},
respectively. We use the communication schemes described
in \S~\ref{Sec:Communication-Efficiency-of-AdFSGOR},
with $b_y=8$ and $b_\lambda=64$ for both problems.
Since the equivalent regularization-constrained formulations
of both problems have $\ell_1$-ball constraint sets
(\cf~\eqref{eq:regularization-constrained-consensus-form}),
we have $b_v=\lceil\log_2(2n)\rceil$, where $n=91$
and $n=58$ for the LSR and logistic regression problems,
respectively.

For the LSR problem, we observed that SSAR identified
$5$ RFSGOR rays $\mathcal{R}_1,\ldots,\mathcal{R}_5$,
where $\mathcal{R}_{j+1}=-c_j\mathcal{R}_j$ for
$j\in\{1,\ldots,4\}$ and $c_j\in\R^+$, and that
the SSAR exploitation phase ended at $k=k_0=27$.
Fig.~\ref{Fig:comminication-efficiency-of-AdFSGOR}
\subref{subfig:comminication-efficiency-of-AdFSGOR-linear-regresssion}
shows the number of bits transmitted at each iteration for SSAR (bottom), the FSGOR method
~\cite[Alg. 3]{abeynanda2026characteristics} (middle),
and Algorithm~\ref{Alg:Dual-Decomposition-Algorithm} (top).
Over $40$ iterations, the communication costs are
$T_{\texttt{SSAR}}^{40}=9100896$,
$T_{\texttt{FSGOR}}^{40}=14937466$, and
$T_{\texttt{Alg1}}^{40}=25975040$. For logistic regression, we observed that SSAR identified
$10$ RFSGOR rays $\mathcal{R}_1,\ldots,\mathcal{R}_{10}$,
where $\mathcal{R}_{j+1}=-c_j\mathcal{R}_j$ for
$j\in\{1,\ldots,9\}$ and $c_j\in\R^+$, and that
the SSAR exploitation phase ended at $k=k_0=22$.
Fig.~\ref{Fig:comminication-efficiency-of-AdFSGOR}
\subref{subfig:comminication-efficiency-of-AdFSGOR-logistic-regresssion}
shows the corresponding communication costs at each iteration. Over $40$ iterations, we have
$T_{\texttt{SSAR}}^{40}=724524$,
$T_{\texttt{FSGOR}}^{40}=1369783$, and
$T_{\texttt{Alg1}}^{40}=1521920$.
Thus, in both problems, SSAR considerably reduces
communication costs compared with FSGOR and
Algorithm~\ref{Alg:Dual-Decomposition-Algorithm}.

To further evaluate communication efficiency,
Fig.~\ref{Fig:comminication-efficiency-of-AdFSGOR}
\subref{subfig:error-vs-bits-linear-regresssion}
and Fig.~\ref{Fig:comminication-efficiency-of-AdFSGOR}\subref{subfig:error-vs-bits-logistic-regresssion}
show the average absolute dual objective error
$\bar E(B)$ versus the number of transmitted bits
$B$ for the LSR and logistic regression problems,
respectively, averaged over $10$ realizations.
Here, $\bar E(B){=}(1/10)\sum_{s=1}^{10}
|g_s(\boldsymbol{\lambda}_s^{(k_s(B))})
{-}g_s(\boldsymbol{\lambda}_s^\star)|$,
where $g_s$ is the dual function,
$\boldsymbol{\lambda}_s^\star$ is a dual optimal
solution, and $k_s(B)$ is the last iteration
completed within budget $B$, all for realization $s$.
For each realization, the training data and
initialization points were randomized.
Specifically, $309000$ samples were randomly selected
from the first $463715$ samples of the MSD, while
$3220$ samples were randomly selected from the
$4601$ samples of the Spambase dataset.
Both figures show that SSAR achieves a given accuracy
with considerably fewer transmitted bits than FSGOR
and Algorithm~\ref{Alg:Dual-Decomposition-Algorithm},
while attaining a lower error for a given
communication budget.

\begin{figure}[!t]
\centering
\subfloat[]{%
\begin{tikzpicture}[x=0.095cm,y=0.115cm]

\draw[fill=green!30]
    (0,20) rectangle (40,26);
\draw[->, thick] (0,20) -- (42,20);
   
\node[above, font=\tiny] at (0,27.5) {$b$};

\draw[->, thick] (0,20) -- (0,28);

\node[above, font=\tiny] at (20,25.5) {649376};

x-axis markings
\foreach \x in {40}
{
    \draw[thick] (\x,20) -- (\x,19.5);
}

 \node[below, font=\tiny] at (39,20.4) {40};
 \node[right, font=\tiny] at (40.7,20) {$k$};

\draw[fill=blue!30]
    (0,10) rectangle (17,11.5);

\draw[fill=blue!30]
    (17,10) rectangle (18,17);

\draw[fill=green!30]
    (18,10) rectangle (40,16);

\node[above, font=\tiny] at (8.5,10.7) {101};
\node[above, font=\tiny] at (17.5,16.2) {649477};
\node[above, font=\tiny] at (28.5,15.5) {649376};

\draw[->, thick] (0,10) -- (42,10);
    
\draw[->, thick] (0,10) -- (0,17);
    
\node[above, font=\tiny] at (0,16.5) {$b$};

x-axis markings
\foreach \x in {17,40}
{
    \draw[thick] (\x,10) -- (\x,9.5);
}

\node[below, font=\tiny] at (39,10.4) {40};
\node[right, font=\tiny] at (40.7,10) {$k$};

\draw[fill=red!30]
    (0,0) rectangle (17,1.5);
\draw[fill=red!30]
    (17,0) rectangle (18,4.5);
\draw[fill=red!30]
    (18,0) rectangle (19,1.5);
\draw[fill=red!30]
    (19,0) rectangle (20,4.5);
\draw[fill=red!30]
    (20,0) rectangle (22,1.5);
\draw[fill=red!30]
    (22,0) rectangle (24,4.5);
\draw[fill=red!30]
    (24,0) rectangle (26,1.5);

\draw[fill=red!30]
    (26,0) rectangle (27,7.2);

\draw[fill=green!30]
    (27,0) rectangle (40,6);

\node[above, font=\tiny] at (8.5,0.8) {101};
\node[above, font=\tiny] at (17,4) {1802};
\node[above, font=\tiny] at (27.5,6.3) {649578};
\node[above, font=\tiny] at (33.5,5.5) {649376};

\draw[->, thick] (0,0) -- (42,0);

\draw[->, thick] (0,0) -- (0,6.8);
    
\node[above, font=\tiny] at (0,6) {$b$};

\foreach \x in {17,26,40}
{
    \draw[thick] (\x,0) -- (\x,-0.5);
}

\node[below, font=\tiny] at (0,0) {0};
\node[below, font=\tiny] at (16,0) {17};
\node[below, font=\tiny] at (25,0) {26};
\node[below, font=\tiny] at (39,0) {40};
\node[right, font=\tiny] at (40.7,0) {$k$};

\end{tikzpicture}%
\label{subfig:comminication-efficiency-of-AdFSGOR-linear-regresssion}}
\hfil
\subfloat[]{%
\begin{tikzpicture}[x=0.095cm,y=0.115cm]

\draw[fill=green!30]
    (0,20) rectangle (40,24);
\draw[->, thick] (0,20) -- (42,20);
    
\node[above, font=\tiny] at (0,25.5) {$b$};

\draw[->, thick] (0,20) -- (0,26);

\node[above, font=\tiny] at (20,23.5) {38048};

x-axis markings
\foreach \x in {40}
{
    \draw[thick] (\x,20) -- (\x,19.5);
}

 \node[below, font=\tiny] at (39,20) {40};
 \node[right, font=\tiny] at (40.7,20) {$k$};

\draw[fill=blue!30]
    (0,10) rectangle (4,11);

\draw[fill=blue!30]
    (4,10) rectangle (5,15);

\draw[fill=green!30]
    (5,10) rectangle (40,14);

\node[above, font=\tiny] at (2,10.25) {11};
\node[above, font=\tiny] at (4.5,14.25) {38059};
\node[above, font=\tiny] at (22.5,13.5) {38048};

\draw[->, thick] (0,10) -- (42,10);
    
\draw[->, thick] (0,10) -- (0,16);
    
\node[above, font=\tiny] at (0,15.5) {$b$};

x-axis markings
\foreach \x in {4,40}
{
    \draw[thick] (\x,10) -- (\x,9.5);
}

\node[below, font=\tiny] at (4,10) {4};
\node[below, font=\tiny] at (39,10) {40};
\node[right, font=\tiny] at (40.7,10) {$k$};

\draw[fill=red!30]
    (0,0) rectangle (4,1);

\draw[fill=red!30]
    (4,0) rectangle (5,3);

\draw[fill=red!30]
    (5,0) rectangle (6,1);

\draw[fill=red!30]
    (6,0) rectangle (7,3);

\draw[fill=red!30]
    (7,0) rectangle (8,1);

\draw[fill=red!30]
    (8,0) rectangle (9,3);

\draw[fill=red!30]
    (9,0) rectangle (10,1);

\draw[fill=red!30]
    (10,0) rectangle (11,3);
\draw[fill=red!30]
    (11,0) rectangle (12,1);
\draw[fill=red!30]
    (12,0) rectangle (13,3);
\draw[fill=red!30]
    (13,0) rectangle (14,1);
\draw[fill=red!30]
    (14,0) rectangle (15,3);
\draw[fill=red!30]
    (15,0) rectangle (16,1);
\draw[fill=red!30]
    (16,0) rectangle (17,3);
\draw[fill=red!30]
    (17,0) rectangle (18,1);
\draw[fill=red!30]
    (18,0) rectangle (19,3);
\draw[fill=red!30]
    (19,0) rectangle (20,1);
\draw[fill=red!30]
    (20,0) rectangle (21,3);

\draw[fill=red!30]
    (21,0) rectangle (22,6);

\draw[fill=green!30]
    (22,0) rectangle (40,4);

\node[above, font=\tiny] at (2,0.25) {11};
\node[above, font=\tiny] at (4.5,2.5) {162};
\node[above, font=\tiny] at (21.5,5.25) {38070};
\node[above, font=\tiny] at (31,3.5) {38048};

\draw[->, thick] (0,0) -- (42,0);

\draw[->, thick] (0,0) -- (0,5.8);
   
\node[above, font=\tiny] at (0,5.5) {$b$};

\foreach \x in {4,21,40}
{
    \draw[thick] (\x,0) -- (\x,-0.5);
}

\node[below, font=\tiny] at (0,0) {0};
\node[below, font=\tiny] at (4,0) {4};
\node[below, font=\tiny] at (20,0) {21};
\node[below, font=\tiny] at (39,0) {40};
\node[right, font=\tiny] at (40.7,0) {$k$};

\end{tikzpicture}%
\label{subfig:comminication-efficiency-of-AdFSGOR-logistic-regresssion}}
\hfil
\subfloat[]{\includegraphics[width=0.5\linewidth]%
{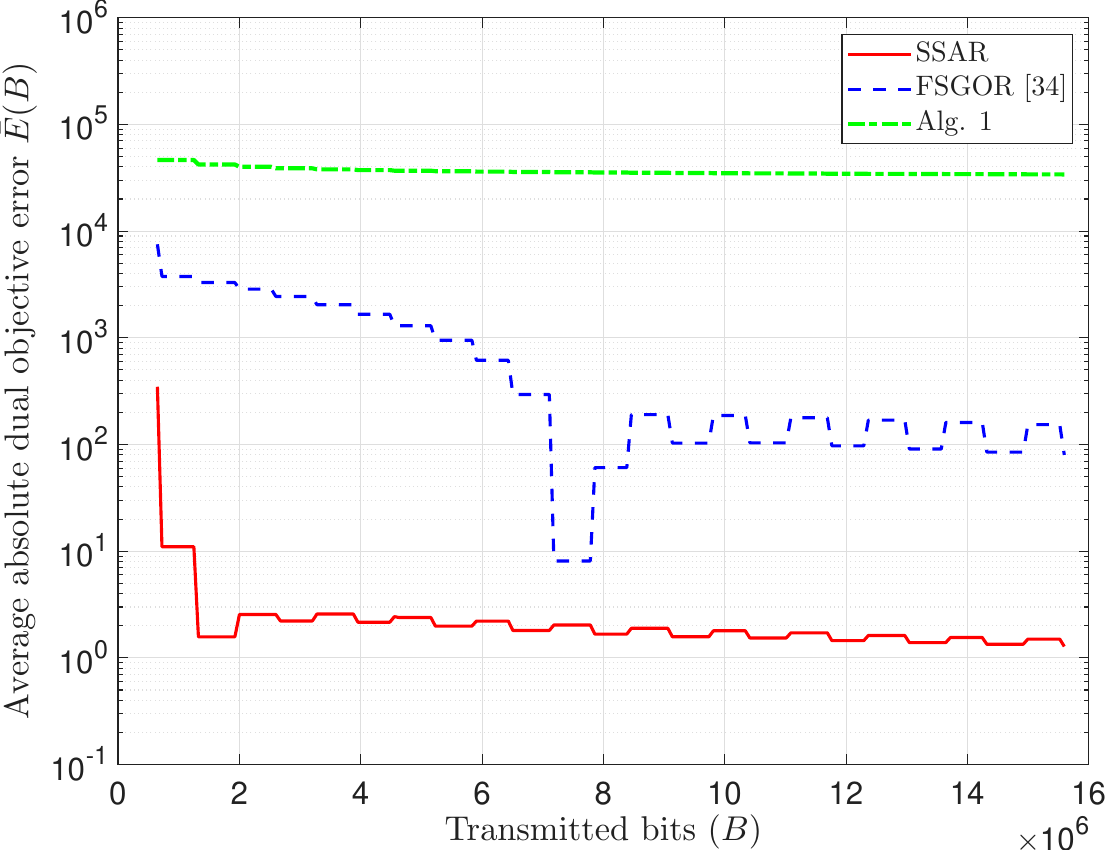}%
\label{subfig:error-vs-bits-linear-regresssion}}
\hfil
\subfloat[]{\includegraphics[width=0.5\linewidth]%
{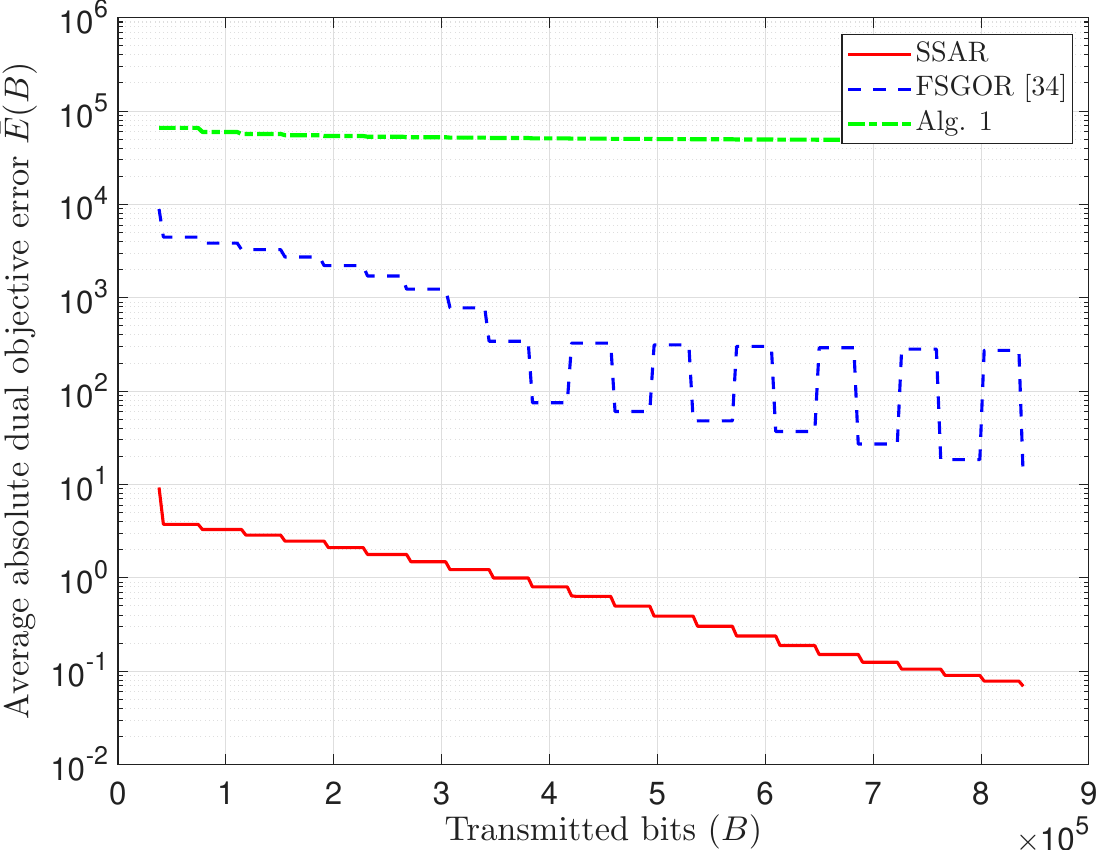}%
\label{subfig:error-vs-bits-logistic-regresssion}}
\hfil
\caption{Number of bits transmitted at each iteration [\protect\subref{subfig:comminication-efficiency-of-AdFSGOR-linear-regresssion}
and \protect\subref{subfig:comminication-efficiency-of-AdFSGOR-logistic-regresssion}]
and average absolute dual objective error versus transmitted bits
[\protect\subref{subfig:error-vs-bits-linear-regresssion}
and \protect\subref{subfig:error-vs-bits-logistic-regresssion}].
Subfigures \protect\subref{subfig:comminication-efficiency-of-AdFSGOR-linear-regresssion}
and \protect\subref{subfig:error-vs-bits-linear-regresssion}
correspond to LSR, while
\protect\subref{subfig:comminication-efficiency-of-AdFSGOR-logistic-regresssion}
and \protect\subref{subfig:error-vs-bits-logistic-regresssion}
correspond to logistic regression. Numbers above the rectangles in
\protect\subref{subfig:comminication-efficiency-of-AdFSGOR-linear-regresssion}
and \protect\subref{subfig:comminication-efficiency-of-AdFSGOR-logistic-regresssion}
give their heights. }  
\label{Fig:comminication-efficiency-of-AdFSGOR}
\end{figure}

\section{Conclusion}\label{sec:conclusion}
In this paper, an efficient subgradient-based framework, referred to as the \emph{sequential subgradient-aligned rays} (SSAR) method, was proposed to improve the convergence performance of the dual subgradient method. The proposed SSAR method was further leveraged to improve the convergence performance and communication efficiency of the standard dual decomposition method for minimizing the average of local loss functions associated with multiple agents that share a common decision variable, a widely used formulation in large-scale distributed optimization applications. Numerical results on least squares regression and logistic regression problems demonstrated that SSAR can significantly improve the convergence performance and communication efficiency of the standard dual decomposition method and outperform the recently proposed FSGOR method~\cite{abeynanda2026characteristics} in both aspects.

\begin{appendices}
\section{Proof of Lemma~\ref{lemma:feasibility-of-relaxed-problem-implies-feasibility-of-original-problem}}\label{Appendix:proof-feasibility-of-relaxed-problem-implies-feasibility-of-original-problem}
\begin{proof}
Since $(\boldsymbol{\tilde\mu},\boldsymbol{\tilde\eta})$ is a feasible
solution of~\eqref{eq:RFSGOR-feasibility-problem-relaxed}, constraints
$1$, $2$, and $3$ of~\eqref{eq:RFSGOR-feasibility-problem-relaxed} are
satisfied. Therefore, constraints $1$, $3$, and $5$ of
\eqref{eq:RFSGOR-feasibility-problem-original} are satisfied by
$\boldsymbol{\hat\mu}=\boldsymbol{\tilde\mu}$ and
$\boldsymbol{\hat\eta}=\boldsymbol{\tilde\eta}$. It remains to show that
$\boldsymbol{\hat\lambda}=c\boldsymbol{\tilde\mu}$ and
$\boldsymbol{\hat\nu}=c\boldsymbol{\tilde\eta}$ satisfy the remaining
constraints of~\eqref{eq:RFSGOR-feasibility-problem-original}. Since
$\boldsymbol{\tilde\eta}\in\operatorname{int}N_{\mathcal{X}}(\bar{\mathbf{x}})$,
there exists $\epsilon>0$ such that $\mathcal{B}(\boldsymbol{\tilde\eta},\epsilon)
\subseteq N_{\mathcal{X}}(\bar{\mathbf{x}})$,
where $\mathcal{B}(\boldsymbol{\eta}_0,r)$ denotes the ball with center
$\boldsymbol{\eta}_0$ and radius $r$. Since
$N_{\mathcal{X}}(\bar{\mathbf{x}})$ is a cone, $\forall$ $c>0$,
\begin{equation}
\label{eq:proof-feasibility-of-relaxed-problem-implies-feasibility-of-original-problem-2}
\mathcal{B}(c\boldsymbol{\tilde\eta},c\epsilon)
\subseteq N_{\mathcal{X}}(\bar{\mathbf{x}}).
\end{equation}
Let $\mathbf{a}\in\partial h_0(\bar{\mathbf{x}})$. For any
$c>\|\mathbf{a}\|/\epsilon$, we have
$\|\mathbf{a}\|<c\epsilon$. Hence, from
\eqref{eq:proof-feasibility-of-relaxed-problem-implies-feasibility-of-original-problem-2}, we have $\mathcal{B}(c\boldsymbol{\tilde\eta},\|\mathbf{a}\|)
\subseteq N_{\mathcal{X}}(\bar{\mathbf{x}}).$
%
Moreover, $\|(c\boldsymbol{\tilde\eta}-\mathbf{a})
-c\boldsymbol{\tilde\eta}\|=\|\mathbf{a}\|$, and hence
$c\boldsymbol{\tilde\eta}-\mathbf{a}
\in\texttt{bnd}~\mathcal{B}(c\boldsymbol{\tilde\eta},\|\mathbf{a}\|)$.
Then, $c\boldsymbol{\tilde\eta}-\mathbf{a}
\in N_{\mathcal{X}}(\bar{\mathbf{x}})$.
Therefore, $c\boldsymbol{\tilde\eta}
\in \mathbf{a}+N_{\mathcal{X}}(\bar{\mathbf{x}})
\subseteq \partial h_0(\bar{\mathbf{x}})
+N_{\mathcal{X}}(\bar{\mathbf{x}})$.
Thus, for sufficiently large $c$, constraint $2$ of
\eqref{eq:RFSGOR-feasibility-problem-original} is satisfied by
$\boldsymbol{\hat\nu}=c\boldsymbol{\tilde\eta}$. Finally, by constraint
$4$ of \eqref{eq:RFSGOR-feasibility-problem-original}, we have
$\boldsymbol{\hat\lambda}=(\mathbf{A}\mathbf{A}\tran)^{-1}
\mathbf{A}\boldsymbol{\hat\nu}
=c(\mathbf{A}\mathbf{A}\tran)^{-1}\mathbf{A}\boldsymbol{\tilde\eta}
=c\boldsymbol{\tilde\mu}$, 
where the last equality follows from
$\boldsymbol{\tilde\eta}=\mathbf{A}\tran\boldsymbol{\tilde\mu}$,
\cf~constraint $3$ of~\eqref{eq:RFSGOR-feasibility-problem-relaxed}.
Therefore,
$(\boldsymbol{\hat\lambda},\boldsymbol{\hat\mu},
\boldsymbol{\hat\nu},\boldsymbol{\hat\eta})=
(c\boldsymbol{\tilde\mu},
\boldsymbol{\tilde\mu},
c\boldsymbol{\tilde\eta},
\boldsymbol{\tilde\eta})$ is feasible for~\eqref{eq:RFSGOR-feasibility-problem-original}.
\end{proof}

\section{Proof of Lemma~\ref{lemma:proof-affine-over-the-line-segment}}\label{Appendix:proof-affine-over-the-line-segment}
\begin{proof}
Since $\mathbf{s}\in\partial f(\mathbf{x}_1)$ and
$\mathbf{s}\in\partial f(\mathbf{x}_2)$, the subgradient inequality gives
$f(\mathbf{x}_2)\geq f(\mathbf{x}_1)+\mathbf{s}\tran(\mathbf{x}_2-\mathbf{x}_1)$
and
$f(\mathbf{x}_1)\geq f(\mathbf{x}_2)+\mathbf{s}\tran(\mathbf{x}_1-\mathbf{x}_2)$.
Hence,
$f(\mathbf{x}_2)=f(\mathbf{x}_1)+\mathbf{s}\tran(\mathbf{x}_2-\mathbf{x}_1)$.
For any $\mathbf{x}\in\mathcal{C}$, there exists $\theta\in[0,1]$ such that
$\mathbf{x}=(1-\theta)\mathbf{x}_1+\theta\mathbf{x}_2$. By convexity,
$f(\mathbf{x})\leq(1-\theta)f(\mathbf{x}_1)+\theta f(\mathbf{x}_2)
=f(\mathbf{x}_1)+\mathbf{s}\tran(\mathbf{x}-\mathbf{x}_1)$.
On the other hand, $\mathbf{s}\in\partial f(\mathbf{x}_1)$ implies
$f(\mathbf{x})\geq f(\mathbf{x}_1)+\mathbf{s}\tran(\mathbf{x}-\mathbf{x}_1)$.
Thus, $f(\mathbf{x})=f(\mathbf{x}_1)+\mathbf{s}\tran(\mathbf{x}-\mathbf{x}_1)$
for all $\mathbf{x}\in\mathcal{C}$, and hence $f$ is affine on $\mathcal{C}$.
\end{proof}

 \end{appendices}

\bibliographystyle{IEEEtran}
\bibliography{IEEEabrv,References}

\end{document}